\newcommand{\R}{\mathbb{R}}
\newcommand{\C}{\mathbb{C}}
\newcommand{\Z}{\mathbb{Z}}
\newcommand{\im}{\text{im}}
\newcommand{\del}{\partial}
\newcommand{\dbar}{\overline{\partial}}
\newcommand{\cala}{\mathcal{A}}
\newcommand{\calh}{\mathcal{H}}
\theoremstyle{definition}
\newtheorem{defi}{Definition}
\newtheorem{rmk}{Remark}
\newtheorem{exa}{Example}
\theoremstyle{plain}
\newtheorem{lemma}{Lemma}
\newtheorem{cor}{Corollary}
\newtheorem{prop}{Proposition}
\newtheorem{thm}{Theorem}
\newtheorem*{thm*}{Theorem}
\newtheorem*{conj*}{Conjecture}
\newtheorem*{question*}{Question}
\title{Hermitian  geometrically formal manifolds}
\newtheorem{question}{Question}
\author{Tommaso Sferruzza}
\address{Dipartimento di Matematica ``Giuseppe Peano"\\
Università di Torino\\
Via Carlo Alberto 10, 10121\\
Torino, Italy}
\email{tommaso.sferruzza@gmail.com}
\author{Adriano Tomassini}
\address{Dipartimento di Scienze Matematiche, Fisiche e Informatiche\\
Unit\`{a} di Matematica e Informatica,
Universit\`{a} degli Studi di Parma\\
Parco Area delle Scienze 53/A, 43124 \\
Parma, Italy}
\email{adriano.tomassini@unipr.it}
\keywords{Geometrically Dolbeault formal, geometrically Aeppli formal, geometrically Bott-Chern formal, Blow-up, Nonnegative curvature operator, Calabi-Eckmann, Complex parallelisable}
\thanks{The first author holds a research fellowship of Istituto Nazionale di Alta Matematica and he is partially supported by the GNSAGA project ``Progetti di Ricerca 2025 - CUP E53C24001950001". The first author is partially supported by the Project PRIN 2022 ``Real and Complex Manifolds: Geometry and Holomorphic Dynamics 2022AP8HZ9'' and both authors are partially supported by GNSAGA of INdAM}
\subjclass[2010]{53C55, 32Q55, 55P99, 58A14}
\begin{document}

\begin{abstract}
We study Hermitian geometrically formal metrics on compact complex manifolds, focusing on Dolbeault, Bott-Chern, and Aeppli cohomologies. We establish topological and cohomological obstructions to their existence and we provide a detailed analysis for compact complex surfaces, complex parallelisable solvmanifolds, and Calabi-Eckmann manifolds. We prove that the standard blow-up metric on any blow-up of a K\"ahler manifold is not geometrically formal, and that K\"ahler metrics with nonnegative curvature operator are necessarily geometrically formal. 
\end{abstract}
\maketitle
\tableofcontents

\section{Introduction}
On compact globally symmetric spaces and rational homology spheres, it is well known that the products of harmonic forms are harmonic. Yet, this fails for most closed Riemannian manifolds. 
 
Motivated by the study of rational homotopy theory, Sullivan \cite{Sull73} observed that “there are topological obstructions for M to admit a metric in which the product of harmonic forms is still harmonic”: for instance, the rational formality of the closed oriented manifold and the vanishing of every Massey product.

Later, Kotschick \cite{Kot01} defined \emph{(metrically) formal metrics} (usually called geometrically formal metrics) on a closed manifold as Riemannian metrics for which all wedge products of harmonic forms are harmonic. 

Harmonic forms have constant pointwise norm with respect to a geometrically formal metric \cite[Lemma 4]{Kot01}. This weak holonomy reduction principle allowed Kotschick \cite{Kot01} to obtain  several topological obstructions for the existence of such metrics (often non-trivial on rationally formal manifolds), and to characterize the cohomology ring of a geometrically formal manifold up to dimension four. For further topological obstructions, see \cite{OrnPil}.

In the wake of Kotschick's foundational paper, a number of subsequent works investigated the existence of geometrically formal metrics on homogeneous spaces and biquotients, paired with non-negative curvature hypotheses  \cite{Kot1,KT1,KT2,AZ,B14}.
 
In the holomorphic setting, a compact K\"ahler manifold (and more generally, a compact complex manifold satisfying the $\del\dbar$-lemma) is notably always formal in the sense of Sullivan \cite{DGMS}. Nonetheless, a K\"ahler metric is not always geometrically formal: in \cite{Huy00}, Huybrechts relates the failure of geometric formality for K\"ahler metrics to the geometric properties of the \emph{non-linear K\"ahler cone}, whereas Nagy \cite{Nagy} proved structural theorems and completely described the first and second Betti numbers of K\"ahler geometrically formal three-folds.

On the other hand, the harmonicity of products of harmonic forms led Merkulov  \cite{Mer} to study the $\mathcal{A}_{\infty}$-structure on the space of harmonic forms of K\"ahler manifolds. A twisted version of this structure was introduced by Polishchuk to confirm Kontsevich’s
homological version of mirror symmetry on elliptic curves \cite{Pol}.
 
Outside of the $\del\dbar$-lemma realm, \emph{geometric Dolbeault, Bott-Chern, Aeppli, and ABC-geometric formality} have recently been introduced \cite{TomTor14, AT15, MS24,AS20} as Hermitian adaptations of geometrically formal metrics to the harmonic forms with respect to the \emph{Dolbeault, Bott-Chern, and Aeppli harmonic Laplacians} (see Section \ref{sec:not} for the definitions). These metrics arise naturally in \emph{Dolbeault homotopy theory} \cite{NT78} and \emph{pluripotential homotopy theory} \cite{MS24}. In particular, their existence is obstructed by a respective notion of Massey products (see Section \ref{sec:not}).

In this paper, we systematically study  Hermitian geometrically formal metrics in the above senses and K\"ahler geometrically formal metrics, on compact complex manifolds. The relations between these notions can be found in Diagram \ref{diag:metrics}.

In the first part of the paper, we provide complex and topological obstructions. We start by proving that, on geometrically Bott-Chern formal manifolds, a $\del\dbar$-lemma for $(p,0)$-forms holds (Proposition \ref{prop:1} and Corollary \ref{cor:ddbarlemma-bc}).

Then, in analogy with \cite{Kot01}, we prove that Dolbeault harmonic forms (respectively, Bott-Chern and Aeppli, respectively, bigraded harmonic forms) have constant pointwise norm with respect to geometrically Dolbeault (respectively, ABC- and geometrically Aeppli) formal metrics, see Propositions \ref{prop:dolb-const-norm}, \ref{prop:abc-form-const}, and \ref{prop:aep-bot-const-norm}. We introduce the notion of \emph{geometrically Bott-Chern (CN) formal metrics} for geometrically Bott-Chern formal metrics with Bott-Chern harmonic forms of constant pointwise norm. As a consequence, we establish upper and lower bounds on the Hodge numbers, the dimensions of the Bott-Chern and Aeppli cohomology spaces, and, via Fr\"olicher inequalities \cite{Fro55,AT15}, on the Betti numbers of the underlying manifold, see Theorem \ref{thm:geom-dolb-top}, Theorem \ref{thm-abc-geom-top}, Corollary \ref{cor:geom-BC-form-const-norm}, and Theorem \ref{thm:geom-aeppl-top}.  As an immediate application, we obtain the following.
\begin{thm*}[Theorem \ref{thm:blow-up-torus}]
The blow-up of any complex torus along any compact complex submanifold is not geometrically formal, nor geometrically Dolbeault formal, nor geometrically Bott-Chern (CN) formal.
\end{thm*}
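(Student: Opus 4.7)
The strategy is to observe that the complex torus already saturates the cohomological upper bounds imposed by geometric formality, Dolbeault formality, and Bott--Chern (CN) formality, and then to use the standard blow-up formulas to produce a strict violation after blowing up.

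First, let $T$ be a complex torus of complex dimension $n$. Since $T$ is flat K\"ahler, it satisfies the $\del\dbar$-lemma, and by translation invariance of harmonic forms one computes
\[
h^{p,q}_{\dbar}(T) = h^{p,q}_{BC}(T) = \binom{n}{p}\binom{n}{q}, \qquad b_k(T) = \binom{2n}{k}.
\]
On the other hand, the pointwise constant-norm results of Propositions \ref{prop:dolb-const-norm} and \ref{prop:abc-form-const}, combined with an evaluation-at-a-point injection $\mathcal{H}^{p,q}_{\sharp} \hookrightarrow \Lambda^{p,q} T^{*}_{x} X$, give the bounds $h^{p,q}_{\sharp} \leq \binom{n}{p}\binom{n}{q}$ for $\sharp \in \{\dbar, BC\}$ on a geometrically Dolbeault (resp.\ Bott--Chern (CN)) formal manifold (see Theorem \ref{thm:geom-dolb-top} and Corollary \ref{cor:geom-BC-form-const-norm}), while Kotschick's classical argument yields $b_k \leq \binom{2n}{k}$ on a geometrically formal manifold. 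Thus equality in all three bounds holds on $T$.

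Second, let $\widetilde{T} = \mathrm{Bl}_Z T$ be the blow-up along a compact complex submanifold $Z$ of codimension $r \geq 2$ (the codimension-one case is excluded by the usual convention, as blowing up along a divisor is a biholomorphism). Invoking the classical blow-up formulas (Voisin for Dolbeault, Rao--Yang--Yang or Stelzig for Bott--Chern, Griffiths--Harris for de Rham), we have
\[
h^{p,q}_{\sharp}(\widetilde{T}) = h^{p,q}_{\sharp}(T) + \sum_{i=1}^{r-1} h^{p-i,q-i}_{\sharp}(Z), \qquad b_k(\widetilde{T}) = b_k(T) + \sum_{i=1}^{r-1} b_{k-2i}(Z).
\]
Specializing to $(p,q) = (1,1)$, $k = 2$, and $i = 1$, and using $h^{0,0}_{\sharp}(Z) = b_0(Z) \geq 1$, we obtain
\[
h^{1,1}_{\sharp}(\widetilde{T}) \geq n^2 + 1 > n^2, \qquad b_2(\widetilde{T}) \geq \binom{2n}{2} + 1 > \binom{2n}{2},
\]
in direct violation of the upper bounds recalled above. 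Hence $\widetilde{T}$ admits no geometrically formal, geometrically Dolbeault formal, or geometrically Bott--Chern (CN) formal metric.

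No step should present a genuine obstacle: the pointwise-norm upper bounds are already established in the earlier sections of the paper, and the blow-up formulas are classical; the only real content is the lucky coincidence that the torus sits exactly at the saturation of these bounds, so that any blow-up necessarily exceeds them in bidegree $(1,1)$.
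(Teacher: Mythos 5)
Your proposal is correct and follows essentially the same route as the paper: the torus saturates the upper bounds $h^{p,q}_{\sharp}\leq h^{p,q}(\mathbb{T}^n_{\C})$ and $b_k\leq b_k(\mathbb{T}^{2n})$ coming from Theorem \ref{thm:geom-dolb-top}, Corollary \ref{cor:geom-BC-form-const-norm}, and \cite[Theorem 6]{Kot01}, and the blow-up strictly increases the relevant cohomology. The only (immaterial) difference is that the paper invokes just the de Rham blow-up formula and concludes from the Betti-number inequality $b_{2k-2}(\tilde{M})>b_{2k-2}(M)$ alone, since the bound $b_k\leq b_k(\mathbb{T}^{2n})$ is already part of all three obstruction results, whereas you additionally bring in the Dolbeault and Bott--Chern blow-up formulas to violate the bidegree $(1,1)$ bounds directly.
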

We provide further obstructions by studying the Albanese map of geometrically Bott-Chern formal manifolds endowed with Bott-Chern harmonic $(p,0)$ forms of constant pointwise norm.
\begin{thm*}[Theorem \ref{thm:fibr-ABC}]
Let $M$ be a compact complex manifold admitting a geometrically Bott-Chern formal metric such that the Bott-Chern harmonic $(1,0)$-forms have constant pointwise norm. Let $k:=\dim_{\C} \text{Alb} (M)$. Then, there exists a surjective holomorphic submersion $\pi\colon M\rightarrow \mathbb{T}^{k}_{\C}$ which is injective in $H_{BC}^{\bullet,\bullet}$, $H_{\dbar}^{\bullet,0}$, and $H_{\del}^{0,\bullet}$. Moreover, if 
\begin{itemize}
    \item $k=b_1(M)/2<n$, then $M$ is K\"ahler and if the geometrically Bott-Chern formal metric is K\"ahler, $M$ admits a holomorphic submersion onto a complex torus that is an injective in de Rham cohomology with complex coefficients;
    \item $k=b_1(M)/2=n$, then $M$ is biholomorphic to a complex torus.
\end{itemize}
\end{thm*}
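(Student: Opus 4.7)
The plan is to use the constant pointwise norm hypothesis to produce a pointwise orthonormal frame of Bott-Chern harmonic $(1,0)$-forms, identify $\pi$ with the Albanese map, and then read injectivity off pointwise linear independence. Pick a basis $\omega_1, \ldots, \omega_k$ of the Bott-Chern harmonic $(1,0)$-forms. Because $(\del\dbar)^{*}$ vanishes on $(1,0)$-forms by bidegree, Bott-Chern harmonicity here is equivalent to being $d$-closed and holomorphic, and such forms are exactly those defining the Albanese variety, so their count is indeed $k$. For any $a,b \in \C$, the combination $a\omega_i + b\omega_j$ is again Bott-Chern harmonic of type $(1,0)$, hence has constant pointwise norm; polarising $|\omega_i+\omega_j|^{2}$ and $|\omega_i+i\omega_j|^{2}$ forces every entry of the Gram matrix $H_{ij}(x) := \langle \omega_i(x), \omega_j(x)\rangle$ to be a constant function of $x$. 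Diagonalising and rescaling, I may arrange $H \equiv \mathrm{Id}$, so $\omega_1, \ldots, \omega_k$ are $\C$-linearly independent at every point. Consequently the Albanese map $\alpha\colon M \to \mathrm{Alb}(M) = \mathbb{T}^{k}_{\C}$ has full-rank differential everywhere and is a holomorphic submersion; its image, being a compact complex analytic subset of $\mathbb{T}^{k}_{\C}$ of dimension $k$, is all of $\mathbb{T}^{k}_{\C}$, and I set $\pi := \alpha$.

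\textbf{Injectivity in cohomology.} Every class in $H_{BC}^{\bullet,\bullet}(\mathbb{T}^{k}_{\C})$, $H_{\dbar}^{\bullet,0}(\mathbb{T}^{k}_{\C})$, or $H_{\del}^{0,\bullet}(\mathbb{T}^{k}_{\C})$ admits a translation-invariant representative, namely a constant-coefficient wedge of the coframe $\eta_1, \ldots, \eta_k, \bar\eta_1, \ldots, \bar\eta_k$. Its pullback under $\pi$ is the corresponding wedge of $\omega_i$'s and $\bar\omega_i$'s, which by geometric Bott-Chern formality is Bott-Chern harmonic on $M$ (and, in the $(p,0)$ and $(0,p)$ cases, automatically $\dbar$- and $\del$-harmonic for the same bidegree reason as above). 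A harmonic form vanishing in cohomology must vanish as a form, and the pointwise linear independence of $\{\omega_i, \bar\omega_i\}_{i=1}^{k}$ at each $x \in M$ then forces the original constant-coefficient form on $\mathbb{T}^{k}_{\C}$ to be zero. This yields injectivity of $\pi^{*}$ in all three cohomologies.

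\textbf{The dichotomy.} If $k = b_1(M)/2 = n$, then $\pi$ is a submersion between compact complex $n$-manifolds, hence a finite unramified covering of $\mathbb{T}^{n}_{\C}$; any such covering is itself a complex torus (the pulled-back lattice is a finite-index sublattice), so $M$ is biholomorphic to $\mathbb{T}^{n}_{\C}$. When $k < n$, I first note that $H_{BC}^{1,0}(M) \oplus H_{BC}^{0,1}(M) \hookrightarrow H_{dR}^{1}(M;\C)$ is always injective on any compact complex manifold (a $d$-exact form of pure bidegree must vanish), so $b_1 \ge 2k$ and the hypothesis $b_1 = 2k$ upgrades this to an isomorphism, i.e.\ a genuine Hodge decomposition in degree one. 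I would then build a K\"ahler metric on $M$ by combining $\pi^{*}\omega_{\mathbb{T}^{k}_{\C}}$ (a closed semi-positive $(1,1)$-form of rank $k$) with a strictly positive $(1,1)$-contribution along the fibers of $\pi$, exploiting the $\del\dbar$-lemma for $(p,0)$-forms (Proposition \ref{prop:1} and Corollary \ref{cor:ddbarlemma-bc}) to glue the fiberwise data globally. Finally, if the given Bott-Chern formal metric is already K\"ahler, the K\"ahler identities identify $H_{BC}^{\bullet,\bullet}(M) \cong H_{dR}^{*}(M;\C)$, and the injectivity of $\pi^{*}$ from the previous step transfers at once to de Rham cohomology with complex coefficients.

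The main obstacle will be the K\"ahler claim in the case $k < n$: the pullback $\pi^{*}\omega_{\mathbb{T}^{k}_{\C}}$ degenerates along the positive-dimensional fibers, so strict positivity must come from genuine input from the fiber geometry. The ingredients I would expect to combine here are the Hodge decomposition in degree one just produced, the $(p,0)$-$\del\dbar$-lemma of Proposition \ref{prop:1}, and the holomorphic trivialisation of a rank-$k$ subbundle of $T^{*1,0}M$ provided by the $\omega_i$'s; weaving these into a single K\"ahler form on $M$ is the delicate technical step.
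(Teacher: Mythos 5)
Your construction of the submersion and the cohomological injectivity coincide with the paper's argument: polarisation of the constant-norm hypothesis gives a constant Gram matrix, hence a pointwise linearly independent orthonormal frame of closed holomorphic $1$-forms, the Albanese differential has full rank, and geometric Bott--Chern formality plus pointwise independence of $\{\omega_i,\bar\omega_i\}$ forces the pulled-back invariant harmonic forms to be non-zero harmonic forms, whence injectivity of $\pi^*$ (the $H_{\dbar}^{\bullet,0}$ and $H_{\del}^{0,\bullet}$ statements then follow from Corollary \ref{cor:ddbarlemma-bc}, exactly as in the paper). Your treatment of the case $k=n$ is genuinely different and arguably cleaner: an equidimensional proper holomorphic submersion onto $\mathbb{T}^n_{\C}$ is a finite unramified covering, and any such covering is $\C^n/\Lambda'$ for a finite-index sublattice, so $M$ is a torus directly. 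The paper instead passes through Blanchard (to get K\"ahlerness), the equality $h^{\bullet,\bullet}(M)=h^{\bullet,\bullet}(\mathbb{T}^n_\C)$ from Corollary \ref{cor:geom-BC-form-const-norm}, and Catanese's deformation-in-the-large theorem; your route avoids all three inputs and does not even need the hypothesis $b_1=2n$.

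The genuine gap is the K\"ahler claim when $k=b_1(M)/2<n$, and you have correctly identified it yourself: your proposal to combine $\pi^*\omega_{\mathbb{T}^k_\C}$ with ``a strictly positive $(1,1)$-contribution along the fibers'' is only a strategy, not an argument. There is no reason supplied that the fibers of the Albanese map carry K\"ahler data varying in a way that can be glued to a globally closed positive $(1,1)$-form; the degree-one Hodge decomposition and the $(p,0)$-$\del\dbar$-lemma of Proposition \ref{prop:1} that you propose as ingredients do not by themselves control $(1,1)$-positivity transverse to the base directions. This is precisely the content of the theorem of Blanchard that the paper cites (\cite[p.~163]{Bla56}): when the Albanese map is a submersion with $k=b_1/2$, the manifold is K\"ahler. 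Without that citation or a substitute for it, the first bullet of the dichotomy is unproven. The final sub-claim (injectivity in de Rham cohomology when the metric is K\"ahler) is fine as you state it --- the bigraded decomposition of $H^{\bullet}_{dR}$ on both source and target reduces it to the Bott--Chern injectivity already established --- though the paper reaches the same conclusion by quoting \cite[Proposition 2.1]{TT17} and Kotschick's Theorem \ref{thm:kot_subm} instead.
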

In second part of the paper, we focus on compact complex surfaces and blow-ups. In \cite{AT15}, it is proved that compact complex surfaces diffeomorphic to solvmanifolds are geometrically Dolbeault formal, except for primary Kodaira surfaces. Here, we investigate further several classes of (not necessarily locally homogeneous) compact complex surfaces.  
\begin{thm*}[Theorem \ref{thm:rational} and Theorem \ref{thm:ruled}.]
Minimal rational surfaces of type $\mathbb{CP}^2$ and Hirzebruch surfaces $\Sigma_n$ with $n$ even are K\"ahler geometrically formal. Hermitian metrics on the Hirzebruch surfaces $\Sigma_n$ with $n\geq 2$ odd, are never geometrically Aeppli formal.

Ruled surfaces of genus $g\geq  2$, class VII minimal surfaces with $b_2>6$, classical Enriques surfaces, and $K3$ surfaces are not geometrically  formal, nor geometrically Dolbeault formal, nor geometrically Bott-Chern (CN) formal.
\end{thm*}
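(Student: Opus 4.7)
The statement collects several assertions for distinct surface classes, so my plan is to split into cases and bring to bear the obstructions established in the first part of the paper (Theorem~\ref{thm:geom-dolb-top}, Corollary~\ref{cor:geom-BC-form-const-norm}, Theorem~\ref{thm-abc-geom-top}, Theorem~\ref{thm:geom-aeppl-top}) for the negative results, while producing explicit K\"ahler metrics for the positive ones. The guiding observation is that on a compact complex surface Hodge numbers are determined by $b_1$ and $b_2$ together with K\"ahlerness/non-K\"ahlerness, so the bounds on $h^{p,q}$, $h^{p,q}_{BC}$, and $h^{p,q}_A$ coming from the constant-pointwise-norm propositions translate into purely topological obstructions.

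\textbf{Positive cases.} For $\mathbb{CP}^2$ I would invoke the Fubini--Study metric: $(\mathbb{CP}^2,\omega_{FS})$ is a Hermitian symmetric space, so harmonic forms are $\mathrm{Isom}$-invariant and wedge products of invariant forms are invariant, hence harmonic; this gives K\"ahler geometric formality. For $\Sigma_n$ with $n=2k$ even I would use that $\Sigma_{2k}$ is a toric K\"ahler surface with $h^{1,1}=2$, $h^{2,0}=0$, $b_1=0$. Choosing a toric K\"ahler metric and averaging over the $T^2$-action, harmonic representatives may be taken $T^2$-invariant. Since $H^{2,2}$ is one-dimensional, formality reduces to a single pointwise identity: the wedge of two invariant harmonic $(1,1)$-forms must be a constant multiple of the volume form. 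For $\Sigma_0=\mathbb{P}^1\times\mathbb{P}^1$ with the product Fubini--Study metric this is immediate (the two generators are $\pi_1^*\omega_{FS}$ and $\pi_2^*\omega_{FS}$, wedging to $\mathrm{vol}$). For $\Sigma_{2k}$, $k\geq 1$, I would pick the toric K\"ahler class admitting a second harmonic $(1,1)$-form compatible with the invariance, and check the pointwise identity directly from the toric moment-polytope description.

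\textbf{Odd Hirzebruch surfaces.} This is the main obstacle of the theorem, since we must exclude \emph{every} Hermitian metric rather than rule out one topological quantity. The plan is to apply Proposition~\ref{prop:aep-bot-const-norm}: an Aeppli geometrically formal metric on $\Sigma_n$ ($n\geq 2$ odd) would force the two Aeppli harmonic $(1,1)$-forms $\eta_1,\eta_2$ to have constant pointwise norms and constant pointwise pairings. In cohomology the intersection form on $H^{1,1}$ of an odd $\Sigma_n$ is diagonal with signature $(1,1)$ (in contrast to the hyperbolic pairing on the even case); pairing this with the constant-norm rigidity yields an overdetermined system on the eigenvalues of the pointwise symmetric form $(\eta_i\wedge\eta_j)/\mathrm{vol}$ at each point, which I expect to be inconsistent with the existence of a nowhere-vanishing K\"ahler-type positive harmonic $(1,1)$-form. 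Concretely, I would pointwise diagonalise $\eta_1,\eta_2$ against the metric, use that the eigenvalues are constant and that the intersection numbers $\eta_1^2$, $\eta_2^2$, $\eta_1\eta_2$ equal the integrals of the pointwise products, and derive a contradiction with the non-hyperbolic signature.

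\textbf{Ruled surfaces, class VII, Enriques, and $K3$.} Here I can feed the computed Hodge diamonds into the bounds from the first part of the paper. Ruled surfaces of genus $g\geq 2$ have $b_1=2g\geq 4$ and $h^{1,0}=g\geq 2$; I would apply Theorem~\ref{thm:fibr-ABC} (forcing a submersion onto the Albanese torus with $k=g$) together with Theorem~\ref{thm:geom-dolb-top} and Corollary~\ref{cor:geom-BC-form-const-norm} to see that the induced injectivity in cohomology is incompatible with $h^{1,1}=2$ on a ruled surface. Class VII minimal surfaces with $b_2>6$ violate Kotschick's bound $b_2\leq 6$ on geometrically formal closed $4$-manifolds, and their Dolbeault/Bott--Chern analogues from Theorem~\ref{thm:geom-dolb-top} and Corollary~\ref{cor:geom-BC-form-const-norm} rule out the other two flavours via the Fr\"olicher inequality. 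For classical Enriques surfaces ($b_2=10$, $h^{1,1}=10$) and for $K3$ surfaces ($b_2=22$, $h^{1,1}=20$, $h^{2,0}=1$), all the relevant bounds ($b_2\leq 6$, $h^{1,1}\leq 4$, $h^{1,1}_{BC}\leq 4$) are violated by wide margins, so the three non-formality statements follow at once. The expected main difficulty outside of the odd-Hirzebruch case is checking, for ruled surfaces of genus exactly $2$, that the submersion provided by Theorem~\ref{thm:fibr-ABC} cannot coexist with the $\mathbb{P}^1$-bundle structure; I would handle this by comparing the pullback of harmonic $(1,0)$-forms from the Albanese with the class of a fibre.
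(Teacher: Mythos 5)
Your treatment of the negative statements (ruled surfaces of genus $g\geq 2$, class VII with $b_2>6$, Enriques, $K3$) follows the paper's route essentially verbatim: the bounds $b_k(M)\leq b_k(\mathbb{T}^4)$, $h^{p,0}h^{0,q}\leq h^{p,q}\leq h^{p,q}(\mathbb{T}^2_{\C})$ from Theorem \ref{thm:geom-dolb-top} and Corollary \ref{cor:geom-BC-form-const-norm}, together with \cite[Theorem 6]{Kot01}, dispose of everything with $b_1>4$ or $b_2>6$. For genus exactly $2$ the extra step you anticipate (comparing Albanese pullbacks with the fibre class) is unnecessary: $k=b_1/2=n=2$ puts you directly in the second bullet of Theorem \ref{thm:fibr-ABC}, so a geometrically Bott--Chern (CN) formal metric would make the surface biholomorphic to a torus; the Dolbeault case is just $h^{1,0}h^{0,1}=4>2=h^{1,1}$; and for plain Riemannian geometric formality you must invoke Theorem \ref{thm:kot_subm} with $b_1=\dim_{\R}M$, which your proposal does not mention.

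There are, however, two genuine gaps. First, for $\Sigma_n$ with $n\geq 2$ even, $T^2$-averaging only makes harmonic representatives invariant; invariant functions on a toric surface are arbitrary functions of the moment map, so the wedge of two invariant harmonic $(1,1)$-forms is $f\,\mathrm{vol}$ with $f$ nonconstant for a generic toric K\"ahler metric. The ``single pointwise identity'' you defer to a moment-polytope check is the entire content of the claim, and it is false for most toric metrics, so your argument does not close. The paper instead uses that $\Sigma_{2k}$ is diffeomorphic to $\mathbb{S}^2\times\mathbb{S}^2$ and appeals to geometric formality of metrics on products of spheres. Second, for $\Sigma_n$ with $n\geq 2$ odd, the contradiction you hope to extract from ``non-hyperbolic signature'' plus constant pointwise norms does not exist at the level you work at: over $\R$ the intersection form $\mathrm{diag}(1,-1)$ of $\mathbb{CP}^2\#\overline{\mathbb{CP}^2}$ \emph{is} isomorphic to the hyperbolic form, so the real cohomology algebra agrees with that of $\mathbb{S}^2\times\mathbb{S}^2$, which does carry geometrically formal metrics. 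Constant norms and intersection numbers alone are therefore consistent, and one needs the finer (integral/almost-complex) argument of \cite[Lemma 9]{Kot1}. The paper's proof is precisely this: geometric Aeppli formality implies Riemannian geometric formality (item (vii) of Diagram \ref{diag:metrics}), and then the non-trivial $\mathbb{S}^2$-bundle over $\mathbb{S}^2$ is excluded by Kotschick's lemma. Your ``I expect to be inconsistent'' marks exactly the missing step.
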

On blow-ups of compact complex manifolds, in particular, on non-minimal compact complex surfaces, we observe that geometric formality, geometric Dolbeault, geometric Bott-Chern (CN) formality can be obstructed by performing a suitable number of blow-ups, so that the cohomology added by exceptional divisors provides obstructions via \cite[Theorem 6]{Kot01}, Theorem \ref{thm:geom-dolb-top} and Corollary \ref{cor:geom-BC-form-const-norm}. On the other hand, Huybrechts proved that geometric formality fails for K\"ahler metrics on any blow-up of a projective variety \cite{Huy00}. By a careful study of harmonic forms on blow-ups, we prove the following.
\begin{thm*}[Theorem \ref{thm:blow-up-kahler}]
On the blow-up of any compact K\"ahler manifold along any compact complex submanifold, the standard blow-up metric is not geometrically formal.
\end{thm*}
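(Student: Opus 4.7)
The plan is to proceed by contradiction: assuming the standard blow-up metric is geometrically formal, I will exhibit a K\"ahler representative of the class $\pi^*[\omega_M] \in H^{1,1}(\tilde M; \R)$, which is impossible because this class is nef but not ample on $\tilde M = \mathrm{Bl}_Z M$ whenever $Z$ has codimension $r \ge 2$. Throughout I write $\tilde \omega = \pi^*\omega_M - \epsilon F$, where $\pi \colon \tilde M \to M$ denotes the blow-down, $E$ is the exceptional divisor, $F$ is a smooth $(1,1)$-form representing $[E]$ (for instance the curvature of a Hermitian metric on $\mathcal{O}_{\tilde M}(E)$), and $\epsilon > 0$ is the small positivity parameter built into the standard construction.

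Let $\eta \in \calh^{1,1}_{\tilde\omega}(\tilde M)$ denote the harmonic representative of $[E]$. I would first show that the eigenvalues of $\eta$ (as a $\tilde\omega$-symmetric endomorphism of $T \tilde M$) are pointwise constant. By Kotschick's Lemma 4 of \cite{Kot01} applied to each power $\eta^k$ (harmonic by geometric formality), every $|\eta^k|^2$ is constant on $\tilde M$. With real eigenvalues $\lambda_1(x), \ldots, \lambda_n(x)$, this forces all elementary symmetric polynomials in $\{\lambda_i^2(x)\}$ to be independent of $x$; combined with the constancy of the harmonic $0$-form $\Lambda_{\tilde\omega}\eta = \sum_i \lambda_i$ and continuity of the eigenvalues, the unordered multiset $\{\lambda_1, \ldots, \lambda_n\}$ is independent of $x$. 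A standard primitive-decomposition identity gives
\[
|\eta|^2 \;=\; (\Lambda_{\tilde\omega} \eta)^2 \;-\; \frac{[E]^2 \cdot [\tilde\omega]^{n-2}}{(n-2)! \, \mathrm{vol}(\tilde M)},
\]
and the intersection-theoretic expansions $[E]\cdot[\tilde\omega]^{n-1} = O(\epsilon^{r-1})$ and $[E]^2\cdot[\tilde\omega]^{n-2} = O(\epsilon^{\max(0, r-2)})$, obtained via the projection formula and $\mathcal{O}_E(E) \cong \mathcal{O}_{\mathbb{P}(N_{Z/M})}(-1)$, then yield a uniform bound $|\eta|^2 \le C$ with $C = C(M, Z)$ independent of $\epsilon$ small; in particular $\max_i |\lambda_i| \le \sqrt{C}$.

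The crucial step is to examine the harmonic representative $\alpha_h \in \calh^{1,1}_{\tilde\omega}(\tilde M)$ of the nef class $\pi^*[\omega_M]$. Since $\tilde\omega$ is itself $\tilde\omega$-harmonic (being K\"ahler) and $[\tilde\omega] = \pi^*[\omega_M] - \epsilon[E]$, uniqueness of harmonic representatives forces
\[
\alpha_h \;=\; \tilde\omega + \epsilon \eta.
\]
Simultaneously diagonalising $\tilde\omega$ and $\eta$ in a $\tilde\omega$-orthonormal frame at an arbitrary point, the eigenvalues of $\alpha_h$ at that point are $1 + \epsilon \lambda_i$; by the previous paragraph these are constant across $\tilde M$. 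For the standard blow-up metric with $\epsilon < 1/\sqrt{C}$, a condition fully compatible with the small-$\epsilon$ regime in which $\tilde\omega$ is already required to be K\"ahler, each $1 + \epsilon \lambda_i > 0$, so $\alpha_h$ is a positive-definite closed real $(1,1)$-form, i.e., a K\"ahler form representing $\pi^*[\omega_M]$.

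This contradicts the fact that no K\"ahler form on $\tilde M$ can represent $\pi^*[\omega_M]$ when $r \ge 2$: any such $\alpha$ would restrict to a K\"ahler form $\alpha|_E$ on the exceptional divisor $E$ of complex dimension $n-1$, so $\int_E \alpha^{n-1}|_E > 0$, while by the projection formula
\[
\int_E \alpha^{n-1}|_E \;=\; (\pi^*[\omega_M])^{n-1} \cdot [E] \;=\; \int_Z [\omega_M]^{n-1}|_Z \;=\; 0,
\]
since $\omega_M^{n-1}|_Z$ vanishes identically once $2(n-1) > 2\dim_{\R} Z = 2(n-r)$, i.e., when $r \ge 2$. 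The conceptual crux, and the main technical obstacle to make rigorous, is the combination of the identification $\alpha_h = \tilde\omega + \epsilon \eta$ with the uniform eigenvalue bound $\max_i|\lambda_i| \le \sqrt{C}$; together these force the non-ample class $\pi^*[\omega_M]$ into the K\"ahler cone of $\tilde M$. The uniform bound itself is elementary but requires careful intersection computations on the blow-up via the projection formula, which I expect to be the most calculation-heavy portion of the proof.
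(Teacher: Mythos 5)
Your argument is correct in its essentials but follows a genuinely different route from the paper's. The paper works directly with top powers: writing the harmonic representative of $[\pi^*\omega]$ as $\pi^*\omega+\del\dbar f$, it shows that $(\pi^*\omega+\del\dbar f)^n$ must be a constant multiple of the volume form $\tilde{\omega}^n/n!$, pins the constant down by comparing the two away from the exceptional locus, and then extracts from the resulting identity of top forms, via contractions, that $\Omega=\del\dbar f$, contradicting $[\Omega]\neq 0$. You instead run Huybrechts' K\"ahler-cone mechanism quantitatively: constancy of pointwise norms of harmonic forms (Kotschick's Lemma 4) combined with the intersection-theoretic identity for $|\eta|^2$ bounds the $\tilde{\omega}$-eigenvalues of the harmonic representative $\eta$ of $[E]$ uniformly in $\varepsilon$, so that the harmonic representative $\tilde{\omega}+\varepsilon\eta$ of the nef class $\pi^*[\omega_M]$ is positive definite for small $\varepsilon$ --- impossible since $(\pi^*[\omega_M])^{n-1}\cdot[E]=0$. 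Your approach buys a conceptually transparent contradiction (geometric formality would push a boundary class of the K\"ahler cone into its interior) and avoids the paper's delicate support manipulations; the paper's version applies for every $\varepsilon$ making $\tilde{\omega}$ positive, whereas yours only covers $\varepsilon<1/\sqrt{C}$, so you should state explicitly that you prove the result for all sufficiently small $\varepsilon$ (which is how the standard metric is defined anyway). Two smaller remarks: the full constancy of the eigenvalue multiset of $\eta$ is more than you need, since the pointwise bound $\max_i|\lambda_i|\le|\eta|=\sqrt{C'}$ already follows from the constancy of $|\eta|^2$ alone and suffices for positivity of $1+\varepsilon\lambda_i$; and if you do keep the multiset argument, the cleanest phrasing is that each decreasingly ordered eigenvalue is a continuous function valued in the finite set $\{\pm\mu_1,\dots,\pm\mu_n\}$ determined by the constant multiset $\{\lambda_i^2\}$, hence constant on the connected manifold $\tilde{M}$.
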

Motivated by these results and by several examples involving geometric Bott-Chern formality (Example 1, K3 surfaces, complex tori, the blow-ups of $\mathbb{CP}^3$ in \cite{PSZ24}), we are led to ask whether the blow-up operation disrupts geometric formality in any sense.
\begin{question}
Can the blow-up of any compact complex manifold admit any type of Hermitian geometrically formal metrics?
\end{question}
We recall that a Hermitian structure $(g,J,\omega)$ is said to be \emph{strong K\"ahler with torsion metrics} (briefly, \emph{SKT}), if $\del\dbar\omega=0$, respectively, \emph{balanced}, if $d^*\omega=0$.
Such metrics play a relevant role in the study of metric aspects of non-K\"ahler manifolds.

Indeed, for any given Hermitian manifold $(M,J,g)$  of complex dimension $n$, in view of 
\cite{Ga2} there is a $1$-parameter family of canonical Hermitian
connections on $M$ which can be distinguished by properties of
their torsion tensor $T$. In particular, there exists a unique
connection $\nabla^B$ satisfying $\nabla^B g=0$, $\nabla^B J=0$
for which $g(X,T(Y,Z))$ is totally skew-symmetric. Then, the resulting
$3$-form can then be identified with $Jd\omega$, where $\omega (\cdot, \cdot)
= g(\cdot, J\cdot)$ is the fundamental $2$-form associated to the
Hermitian structure $(J, g)$. This connection was used by Bismut
in \cite{Bi} to prove a local index formula for the Dolbeault
operator when the manifold is non-K\" ahler. The properties of
such a connection are related to what is called \lq\lq {\em
K\"ahler with torsion geometry}\rq\rq\, and if $Jd \omega$ is closed,
or equivalently if $\omega$ is $\partial \overline \partial$-closed,
then the Hermitian structure $(J, g)$ is strong KT and $g$ is
called a {\em strong K\"ahler with Torsion} metric, shortly \emph{SKT} metric or a {\em{pluriclosed}} metric. The
SKT metrics have also applications in type II string theory
and in 2-dimensional supersymmetric $\sigma$-models \cite{GHR,Str}
and have relations with generalized K\"ahler structures (see for
instance \cite{Gu,Hi2,AG,FT}). 

In the terminology of Michelsohn (see
\cite{Mi}), a Hermitian metric $g$ on a complex manifold $(M,J)$ is said to be {\em balanced} if its the fundamental form $\omega$ is co-closed, namely $d\ast\omega = 0$. When $(M, J )$ is compact, the
existence of a balanced metric on $(M, J)$ is intrinsically characterized in terms of currents
(see \cite[Theorem 4.7]{Mi}). Furthermore, by Alessandrini and Bassanelli (see \cite{AB1}, \cite{AB2}) the class of compact balanced manifolds have the nice property to be invariant under modifications.

The Fino-Vezzoni conjecture \cite{FV} states that a compact complex manifold admitting both SKT and balanced metrics is necessarily K\"ahler.

In \cite{ST24} it was proved that on nilmanifold endowed with invariant complex structure of complex dimension 3, there exist SKT metrics if and only if there exists geometrically Bott-Chern formal metrics. Vice versa, the existence of geometric Bott-Chern formal metrics implies the existence of SKT metrics for special families of invariant complex structures on nilmanifolds \cite{ST24}. In line with the Fino-Vezzoni conjecture,  we ask the two complementary questions.
\begin{question}
Let $(M,J)$ be a nilmanifold with invariant complex structure. If $(M,J)$ admits a balanced metric, is it true that $(M,J)$ does not admit geometrically Bott-Chern formal metrics?
\end{question}
\begin{question}
Do geometrically Bott-Chern formal nilmanifolds admit SKT metrics?   
\end{question}
Arroyo and Nicolini \cite{AN}, proved that a SKT nilmanifold is necessarily $2$-step, hence a positive answer to Question 3, would prove that geometrically Bott-Chern formal metrics would exist only on $2$-step nilmanifolds.

Furthermore, we study the interplay of geometric formality (and thus, rational and pluripotential homotopy theory) with nonnegative curvature properties of K\"ahler metrics. Nonnegativity of the Ricci curvature implies that harmonic $1$-forms are parallel, by a Bochner tecnique. However, this hypothesis is not sufficient, as Huybrechts \cite{Huy00} proved that a Calabi-Yau manifold that is birational but not isomorphic to another Calabi-Yau manifold is never geometrically formal.

Under the most robust hypothesis of a K\"ahler metric with nonnegative curvature operator and exploiting the Gallot-Meyer theorem \cite{GM75}, we prove the following.
\begin{thm*}[Theorem \ref{thm:curv-kahl-non-neg-curv}]
Let $M$ be a compact complex manifolds with a K\"ahler metric $g$ of nonnegative curvature operator. Then $g$ is geometrically formal.
\end{thm*}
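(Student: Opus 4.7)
The plan is to reduce geometric formality to the statement that every harmonic form of $g$ is parallel for the Levi-Civita connection $\nabla$, and then to exploit the fact that wedge products of parallel forms are parallel, hence automatically harmonic. The key input is the Gallot-Meyer theorem \cite{GM75}, which is precisely tailored to the nonnegative curvature operator hypothesis.

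More concretely, the first step is to recall the Weitzenb\"ock decomposition $\Delta = \nabla^{*}\nabla + \mathcal{R}_{k}$ on $k$-forms, where $\mathcal{R}_{k}$ is the Lichnerowicz-type curvature endomorphism built from the Riemann tensor. By Gallot-Meyer, nonnegativity of the curvature operator as a symmetric operator on $\Lambda^{2}T^{*}M$ forces the pointwise nonnegativity of $\mathcal{R}_{k}$ on every bundle $\Lambda^{k}T^{*}M$. For a harmonic $k$-form $\alpha$, pairing $\Delta\alpha=0$ with $\alpha$ and integrating over $M$ then yields
\[
0 \;=\; \int_{M} \|\nabla\alpha\|^{2}\, d\mathrm{vol}_{g} \;+\; \int_{M} \langle \mathcal{R}_{k}\alpha,\alpha\rangle\, d\mathrm{vol}_{g},
\]
and since both summands are nonnegative they must vanish separately. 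In particular, every harmonic form satisfies $\nabla\alpha \equiv 0$.

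Next, I would use the Leibniz identity $\nabla_{X}(\alpha\wedge\beta) = \nabla_{X}\alpha\wedge\beta + \alpha\wedge\nabla_{X}\beta$ to conclude that wedge products of parallel forms are parallel, together with the standard formulas $d\gamma = \sum_{i} e^{i}\wedge\nabla_{e_{i}}\gamma$ and $d^{*}\gamma = -\sum_{i}\iota_{e_{i}}\nabla_{e_{i}}\gamma$ in a local orthonormal frame to conclude that any parallel form is both closed and coclosed, hence harmonic. Combining these two observations, if $\alpha_{1},\dots,\alpha_{\ell}$ are harmonic forms then their wedge product is parallel, and therefore harmonic; this is precisely the geometric formality of $g$.

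The main obstacle is really the Gallot-Meyer input itself: passing from nonnegativity of the curvature operator on $2$-forms to pointwise nonnegativity of the Weitzenb\"ock endomorphism $\mathcal{R}_{k}$ for every $k$ is a nontrivial algebraic fact. Once this is granted, the argument is automatic and uses the K\"ahler hypothesis only insofar as $g$ is Riemannian, which is consistent with the statement being about Kotschick-type geometric formality rather than its Dolbeault, Bott-Chern, or Aeppli refinements.
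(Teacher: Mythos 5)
Your proposal is correct and follows essentially the same route as the paper: both invoke the Gallot--Meyer theorem (via the Weitzenb\"ock/Bochner formula) to conclude that every harmonic form is parallel, and then use the Leibniz rule for $\nabla$ together with the expressions of $d$ and $d^{*}$ in terms of $\nabla$ to conclude that wedge products of harmonic forms are harmonic. The only cosmetic differences are that you use the integrated Bochner identity where the paper uses the pointwise one plus the maximum principle, and that the paper additionally decomposes harmonic forms into pure bidegree via the K\"ahler identities --- which, as you correctly note, is not needed for geometric formality itself but only for the stronger bigraded (strong formality) conclusion stated in the body of the paper.
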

Compact K\"ahler manifolds with nonnegative curvature operators are total spaces of holomorphic locally isometrically trivial fiber bundles  over compact K\"ahler flat manifolds with, as fibers, products of factors that are either biholomorphic to complex projective spaces, or isometric to compact Hermitian symmetric spaces \cite{CC86}. 
In particular, K\"ahler flat manifolds (Corollary \ref{cor:kahler-flat}) and, hence, K\"ahler solvmanifolds (Corollary \ref{cor:kahler-solv}), are K\"ahler geometrically formal, thus providing an alternative proof of \cite[Theorem 2.4]{ST24} via curvature arguments. 

In the last two sections of this paper, we focus on complex parallelisable manifolds and Calabi-Eckmann manifolds.

Complex parallelisable manifolds are compact quotients of simply connected complex Lie groups by discrete subgroups \cite{Wan54}. Following the notation of Nakamura, we call a complex parallelisable manifold a \emph{complex solvable (respectively, nilpotent) manifold} if the universal cover is a solvable (respectively, nilpotent) Lie group.

In line with the examples in \cite{CFGU}, we confirm the following.
\begin{thm*}[Theorem \ref{thm:dol:cnm}]
Complex nilpotent manifolds are not Dolbeault formal, unless they are complex tori.
\end{thm*}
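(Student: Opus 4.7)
My strategy is to reduce Dolbeault formality of $M=G/\Gamma$ to the formality of the Chevalley--Eilenberg complex of $\bar{\mathfrak{g}}$, where $\mathfrak{g}=\mathrm{Lie}(G)$, and then to invoke Hasegawa's theorem. The first step invokes Sakane's theorem for complex parallelisable nilmanifolds: the inclusion
\[
\iota\colon \bigl(\Lambda^{\bullet,\bullet}_{\mathrm{inv}}(M),\dbar\bigr)\hookrightarrow \bigl(\Lambda^{\bullet,\bullet}(M),\dbar\bigr)
\]
of left-invariant forms is a quasi-isomorphism of CDGAs, so Dolbeault formality of $M$ is equivalent to formality of the left-invariant Dolbeault DGA. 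Since $G$ is a complex Lie group, every left-invariant $(1,0)$-form is holomorphic and hence $\dbar$-closed, whereas the restriction of $\dbar$ to left-invariant $(0,q)$-forms coincides with the Chevalley--Eilenberg differential $d_{\bar{\mathfrak{g}}}$ of $\bar{\mathfrak{g}}$. Consequently, as CDGAs over $\C$,
\[
\bigl(\Lambda^{\bullet,\bullet}_{\mathrm{inv}}(M),\dbar\bigr)\;\cong\;\bigl(\Lambda^{\bullet}(\mathfrak{g}^{1,0})^{*},\,0\bigr)\otimes\bigl(\Lambda^{\bullet}(\mathfrak{g}^{0,1})^{*},\,d_{\bar{\mathfrak{g}}}\bigr).
\]

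The first factor has vanishing differential and is trivially formal; a K\"unneth-type Massey product argument then shows the tensor product is formal iff the second factor is. Concretely, any non-trivial Massey triple product $\langle [\bar u],[\bar v],[\bar w]\rangle$ in $(\Lambda^{\bullet}(\mathfrak{g}^{0,1})^{*},d_{\bar{\mathfrak{g}}})$ lifts via $1\otimes(-)$ to a non-trivial Massey product in the tensor DGA, because in bi-degree $(0,\bullet)$ both a representative and the indeterminacy of the lifted product agree, by K\"unneth, with those of the original. Applying Hasegawa's theorem in its purely Lie-algebraic form---the Chevalley--Eilenberg complex of a nilpotent Lie algebra in characteristic zero is formal iff the algebra is abelian---to $\bar{\mathfrak{g}}$, non-formality of the second factor is equivalent to $\bar{\mathfrak{g}}$, hence $\mathfrak{g}$, being non-abelian. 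Conversely, if $\mathfrak{g}$ is abelian then $G=\C^{n}$, so that $M$ is a complex torus.

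The main obstacle is ensuring Sakane's cohomological isomorphism actually transports formality at the CDGA level. Since $\iota$ is by construction a morphism of CDGAs inducing an isomorphism on cohomology, it is a CDGA quasi-isomorphism, and formality is preserved under CDGA quasi-isomorphisms in the Sullivan--Neisendorfer--Taylor framework; so this step is conceptually direct once Sakane's statement is in hand. Alternatively, one could bypass Hasegawa by producing a Dolbeault Massey product explicitly: choose $Z\in Z(\mathfrak{g})\cap[\mathfrak{g},\mathfrak{g}]\setminus\{0\}$, which exists because $\mathfrak{g}$ is nilpotent and non-abelian, write $Z\equiv[X,Y]$ modulo lower-filtration terms in a Malcev basis of $\mathfrak{g}^{1,0}$, and mimic the Iwasawa computation of \cite{CFGU} using the conjugated $(0,1)$-forms dual to $X,Y$ to obtain a non-zero triple Massey product in $H_{\dbar}^{0,2}(M)$, contradicting Dolbeault formality.
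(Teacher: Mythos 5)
Your proposal follows essentially the same route as the paper: both pass to the invariant Dolbeault algebra, split it as $(\bigwedge^{\bullet}\mathfrak{g}_+^*,\,0)\otimes(\bigwedge^{\bullet}\mathfrak{g}_-^*,\,\dbar)$ with the second factor identified with the Chevalley--Eilenberg complex of a nilpotent Lie algebra, and invoke Benson--Gordon/Hasegawa to conclude non-formality unless $\mathfrak{g}$ is abelian. The one caveat is that your ``concrete'' justification of the tensor-product step by lifting a \emph{triple} Massey product presupposes that non-formality of the Chevalley--Eilenberg complex of every non-abelian nilpotent Lie algebra is witnessed by a non-vanishing triple Massey product, which Hasegawa's theorem does not directly supply; the cleaner argument (implicit in the paper's one-line conclusion) is that the second factor is a DGA retract of the tensor product, and formality is inherited by retracts.
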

This immediately implies that complex nilpotent manifolds are not geometrically Dolbeault formal (Corollary \ref{cor:no-geom-dolb-cn}). The situation, however, differs for complex solvable manifolds, as there exist examples admitting geometrically Dolbeault formal metrics (Remark \ref{rmk:geom-dolb-cs}).

Since complex parallelisable manifolds never satisfy the $\del\dbar$-lemma for $(p,0)$-forms unless they are tori, Proposition \ref{prop:1} implies the following.
\begin{thm*}[Theorem \ref{thm:geom-bott-cpm}]
Complex parallelisable manifolds are not geometrically Bott-Chern formal, unless they are complex tori.
\end{thm*}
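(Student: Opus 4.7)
The plan is to argue by contradiction using the $\partial\bar\partial$-lemma for $(p,0)$-forms established in Proposition \ref{prop:1} for geometrically Bott--Chern formal manifolds. Suppose $M=G/\Gamma$ is a complex parallelisable manifold carrying a geometrically Bott--Chern formal metric. By Proposition \ref{prop:1} (and Corollary \ref{cor:ddbarlemma-bc}), any $d$-closed form in bidegree $(p,0)$ which is $\partial$-, $\bar\partial$-, or $d$-exact must be $\partial\bar\partial$-exact. Since $\partial\bar\partial$ sends forms into bidegrees $(p,q)$ with both $p,q\geq 1$, the image $\partial\bar\partial(\Omega^{\bullet,\bullet})$ contains no nonzero $(p,0)$-form. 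Hence the $(p,0)$ version of the lemma forces every $d$-exact $(p,0)$-form on $M$ to vanish identically.

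Next, I would exploit the complex parallelisability. Let $\alpha^1,\dots,\alpha^n$ be a global holomorphic left-invariant $(1,0)$-coframe on $M$, dual to a basis of the Lie algebra $\mathfrak{g}$ of $G$. Since $G$ is a complex Lie group, the structure constants $c^i_{jk}$ are complex and the Maurer--Cartan equations read
\[
d\alpha^i=-\tfrac{1}{2}\,c^i_{jk}\,\alpha^j\wedge\alpha^k,
\]
so every $d\alpha^i$ is a pure $(2,0)$-form. In particular $\bar\partial\alpha^i=0$ and $\partial\alpha^i=d\alpha^i$, i.e.\ each $\partial\alpha^i$ is a $d$-exact $(2,0)$-form.

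Combining the two steps, the $(p,0)$ $\partial\bar\partial$-lemma obtained above forces $\partial\alpha^i=d\alpha^i=0$ for every $i$, and therefore $c^i_{jk}=0$ for all $i,j,k$. Hence $\mathfrak{g}$ is abelian, $G$ is biholomorphic to $\mathbb{C}^n$, and $M=\mathbb{C}^n/\Gamma$ is a complex torus, contradicting the assumption. This proves the theorem.

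The only mildly delicate point is verifying that the obstruction really is a genuine $(p,0)$ obstruction and not, say, killed by passing to Bott--Chern cohomology. This is immediate once one observes that $\partial\bar\partial$ cannot produce any form of type $(p,0)$, so the $(p,0)$ $\partial\bar\partial$-lemma degenerates to ``$d$-exact $(p,0)$-forms vanish''; the rest of the argument is purely structural via Maurer--Cartan, and no analytic input beyond Proposition \ref{prop:1} is needed.
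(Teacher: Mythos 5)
Your proposal is correct and is essentially the paper's proof: the whole content is that the invariant coframe $\alpha^1,\dots,\alpha^n$ consists of holomorphic $(1,0)$-forms, Proposition \ref{prop:1} forces them to be $d$-closed, and Maurer--Cartan then kills the structure constants. The only remark worth making is that your intermediate step --- ``every $d$-closed, $d$-exact $(p,0)$-form is $\del\dbar$-exact, hence zero'' --- is, as a statement about \emph{arbitrary} $d$-exact $(p,0)$-forms, slightly stronger than what Proposition \ref{prop:1} literally provides (a general $d$-exact $(p,0)$-form need not have a holomorphic primitive, so the proposition does not apply to it directly); in your application this is harmless, since $d\alpha^i=\del\alpha^i$ has the holomorphic primitive $\alpha^i$, and applying Proposition \ref{prop:1} to $\alpha^i$ itself gives $d\alpha^i=0$ at once, which is exactly the paper's one-line argument.
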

We push this further and show in Proposition \ref{prop:ABC-Massey-cs} that on complex solvable manifolds up to complex dimension $5$, we can construct non-vanishing triple ABC-Massey products (see Section \ref{sec:not} for their definition). The proof relies heavily on the explicit structure equations for this class of manifolds (see Appendix \ref{appendix}), so that it would be interesting to provide an answer to the natural
\begin{question}
Does every complex solvable manifold admit non-vanishing triple ABC-Massey products?
\end{question}
In the final section of the present paper, we study Calabi-Eckmann manifolds $M_{u,v}$ (see Section \ref{sec:CE} for their definition). We first provide a multiplicative model for computing Bott-Chern and Aeppli cohomology of any Calabi-Eckmann manifold, thus completing the picture by Stelzig \cite{Ste21} also for the case $u=v$ (Lemmata \ref{lem:model-BC} and \ref{lemma:BC-numbers-CE}).  Then, we conclude by characterizing geometrically Dolbeault and Bott-Chern formal Calabi-Eckmann manifolds.
\begin{thm*}[Theorem \ref{thm:bott_CE} and Theorem \ref{thm:dolb_CE}]
Calabi-Eckmann manifolds $M_{u,v}$ are geometrically Bott-Chern formal if and if and only if their diffeomorphism type are $\mathbb{S}^1\times \mathbb{S}^1$, $\mathbb{S}^1\times \mathbb{S}^3$, or $\mathbb{S}^3\times \mathbb{S}^3$.\\
Calabi-Eckmann manifolds $M_{u,v}$ are geometrically Dolbeault formal if and only if their diffeomorphism type is $\mathbb{S}^1\times \mathbb{S}^{2v+1}$.
\end{thm*}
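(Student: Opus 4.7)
The plan is to combine the explicit multiplicative models of $H^{\bullet,\bullet}_{BC}(M_{u,v})$ and $H^{\bullet,\bullet}_{\dbar}(M_{u,v})$ established in Lemmata \ref{lem:model-BC} and \ref{lemma:BC-numbers-CE} with the obstructions developed in the first part of the paper. Throughout I write $n=u+v+1$ and assume without loss of generality $u\le v$.

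I would first prove the sufficiency directions by exhibiting explicit formal metrics. For $(u,v)=(0,0)$, $M_{0,0}$ is a complex torus and any flat K\"ahler metric is trivially geometrically Bott-Chern formal. For $(u,v)\in\{(0,1),(1,0)\}$, $M_{u,v}$ is a primary Hopf surface and the standard left-invariant Hermitian structure has Bott-Chern harmonic forms exactly the left-invariant ones, whose closure under wedge can be read off directly from the model. For $(u,v)=(1,1)\cong S^3\times S^3$, the bi-invariant Calabi-Eckmann Hermitian structure coming from $\mathrm{SU}(2)\times \mathrm{SU}(2)$ plays the analogous role, and the algebra closure reduces to a finite-dimensional check on invariant forms. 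For the Dolbeault sufficiency, $M_{0,v}$ is a holomorphic principal elliptic bundle over $\mathbb{CP}^v$, and the Hermitian metric induced by pulling back the Fubini-Study form and adding a flat elliptic-fiber component has Dolbeault harmonic forms generated multiplicatively by a single $(0,1)$-class $\theta$, a $(1,0)$-class $\alpha$, and the pullback of the K\"ahler class, with relations that trivially preserve harmonicity under wedge products.

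For the necessity direction of Bott-Chern, assume $(u,v)\notin\{(0,0),(0,1),(1,0),(1,1)\}$. Proposition \ref{prop:1} together with Corollary \ref{cor:ddbarlemma-bc} then forces the $\del\dbar$-lemma for $(p,0)$-forms, and using Lemma \ref{lem:model-BC} I would exhibit a $(p,0)$-form (built from the fiberwise holomorphic form $\alpha$ together with pullbacks from the projective factors) that is $d$-closed but not $\del\dbar$-exact, giving the desired contradiction. For the Dolbeault necessity ($u\ge 1$), Proposition \ref{prop:dolb-const-norm} forces Dolbeault harmonic forms to have constant pointwise norm, so $H^{\bullet,\bullet}_{\dbar}(M_{u,v})$ embeds as a bigraded subalgebra of $\Lambda^{\bullet,\bullet}T^*_xM$ at every point. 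The class $[\omega_1]\in H^{1,1}_{\dbar}$ pulled back from the first projective factor satisfies $[\omega_1]^{u+1}=0$ in cohomology, so any constant-norm harmonic representative has pointwise complex rank at most $u$; but the multiplicative relations of the model require this representative to pair nontrivially against an $(n-1,n-1)$-class via Serre duality, which is incompatible with the pointwise rank drop.

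The main obstacle will be the Dolbeault necessity direction for $u,v\ge 1$, since the purely dimensional bounds from Theorem \ref{thm:geom-dolb-top} do not on their own exclude $M_{u,v}$: one must extract a contradiction from the combined pointwise rank degeneracies of $\omega_1$ and $\omega_2$, the multiplicative relations in the model, and the non-degeneracy of the Serre duality pairing inside the exterior algebra at a point. A secondary obstacle is the explicit Hodge-theoretic verification on $M_{1,1}\cong S^3\times S^3$ used in the Bott-Chern sufficiency step, which requires choosing a Hermitian metric whose Bott-Chern harmonic forms are exactly the bi-invariant ones and then closing them up under wedge.
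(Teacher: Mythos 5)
Your plan has two genuine gaps, both in the necessity directions, which are where all the content of these theorems lies.

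For the Bott--Chern necessity you propose to contradict Proposition \ref{prop:1} (via Corollary \ref{cor:ddbarlemma-bc}) by exhibiting a suitable $(p,0)$-form. This route cannot work: by Borel's computation of the Hodge numbers recalled in Section \ref{sec:CE}, $h^{p,0}_{\dbar}(M_{u,v})=0$ for every $p\geq 1$, so $M_{u,v}$ carries no non-zero holomorphic $p$-forms at all (note that $\phi$ is \emph{not} holomorphic, since $\dbar\phi=\omega_1-i\omega_2$), and the conclusion of Proposition \ref{prop:1} is vacuously true on every Calabi--Eckmann manifold. Moreover ``$d$-closed but not $\del\dbar$-exact'' is not the negation of that conclusion. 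The obstruction has to be found in bidegrees $(1,1)$ and $(2,2)$: the paper uses the Bott--Chern harmonic classes $[\omega_1],[\omega_2]$ together with the identity $\del\dbar(\phi\wedge\overline{\phi})=\omega_1^2+\omega_2^2$, splits into the cases $u=0$, $u=1$, $u\geq 2$, and in the hardest case $u\geq 2$ (where $h^{2,2}_{BC}=2$ and no class is killed outright) it needs an additional pointwise argument, contracting the forced relation $\omega_1'^2=-\omega_2'^2$ against $\omega_1^{\sharp}$ and $(\del\dbar f)^{\sharp}$ to produce $\|\omega_1'\|^2_{g'}=0$. Nothing in your outline anticipates this case, and your Lemma \ref{lemma:change-metric-bott}-free strategy has no replacement for it.

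For the Dolbeault necessity your pointwise-rank argument does not close. From $[\omega_1]^{u+1}=0$ and formality you can indeed conclude that the harmonic representative $\alpha'$ of the generator of $H^{1,1}_{\dbar}$ satisfies $(\alpha')^{u+1}=0$, hence has pointwise rank at most $u$; but a rank-$u$ $(1,1)$-form pairs perfectly well with a suitable $(n-1,n-1)$-form (already on $\C^n$, $dz_1\wedge d\bar z_1$ does), so the asserted incompatibility with Serre duality is not a contradiction, and you yourself flag this as unresolved. The paper's argument runs in the opposite direction: it shows $(\alpha')^{k}\neq 0$ for $k\le u+v$ via Lemma \ref{lemma:change-metric-dolb}, and then derives the contradiction either from $h^{u+1,u+1}_{\dbar}(M_{u,v})=0$ when $u<v$, or, when $u=v$, from an explicit computation showing that the non-zero harmonic form $(\alpha')^{2u}$ is $\dbar$-exact. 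A smaller but symptomatic error: your sufficiency step for $M_{0,v}$ invokes a harmonic $(1,0)$-class and the pullback of the K\"ahler class as generators, but $h^{1,0}_{\dbar}(M_{0,v})=h^{1,1}_{\dbar}(M_{0,v})=0$; the Dolbeault harmonic algebra is spanned by $1$, $\overline{\phi}$, $\phi\wedge\omega_2^{v}$, $\phi\wedge\overline{\phi}\wedge\omega_2^{v}$, for which closure under wedge is immediate.
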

Note that Calabi-Eckmann manifolds are geometrically Dolbeault if and only if they are locally conformally K\"ahler \cite{KO}, whereas they are geometrically Bott-Chern formal if and only if they are Bismut-flat \cite{WYZ}. This latter observation suggests a deeper relationship between canonical Hermitian connections and the existence of Hermitian geometrically formal metrics, which we plan to investigate in future work.\\

\noindent {\bf Acknowledgements.}
The authors would like to thank Dieter Kotschick,  Jorge Lauret, Dennis Sullivan, Giovanni Placini, and Jonas Stelzig for many useful suggestions and comments, and their interest in the paper. The first author would like to thank Jorge Lauret for his kind hospitality during his visiting at FAMAF, Universidad Nacional de Córdoba, Argentina.

\section{Notations and preliminaries}\label{sec:not}
Let $(M,J,g)$ be a compact Hermitian manifold of complex dimension $n$ and let $\ast\colon \mathcal{A}^{p,q}(M)\rightarrow\mathcal{A}^{n-p,n-q}(M)$ be the $\C$-antilinear Hodge $\ast$ operator with respect to $g$, i.e., for every $\alpha,\beta\in \mathcal{A}^{p,q}(M)$, we define
\begin{equation*}
\alpha\wedge \ast\beta=g(\alpha,\beta)\frac{\omega^{n}}{n!},
\end{equation*}
where $g$ is the $\C$-antilinear extension of $g$ to $\mathcal{A}^{\bullet,\bullet}(M)$ and $\omega(X,Y)=g(JX,Y)$ for $X,Y\in TM$ is the fundamental form of $g$, $\omega\in\mathcal{A}^{1,1}(M)\cap \mathcal{A}_{\R}^2(M)$. Sometimes we may denote compact complex manifolds by $M$, without specifying the complex structure $J$.

The \emph{Dolbeault Laplacian,} \emph{Bott-Chern  Laplacian}, and \emph{Aeppli Laplacian} are defined as, respectively, the self-adjoint elliptic operators
\begin{align*}
\Delta_{\dbar}&:=\dbar\,\dbar^*+\dbar^*\dbar,\\
\Delta_{BC}&:=\del\dbar\,\dbar^*\del^*+\dbar^*\del^*\del\dbar+\dbar^*\del\del^*\dbar+\del^*\dbar\,\dbar^*\del+\dbar^*\dbar+\del^*\del,\\
\Delta_A&:=\del\del^*+\dbar\,\dbar^*+\dbar^*\del^*\del\dbar+\del\dbar\,\dbar^*\del^*+\del\dbar^*\dbar\del^*+\dbar\del^*\del\dbar^*,
\end{align*}
and the spaces of \emph{Dolbeault harmonic forms}, \emph{Bott-Chern harmonic forms}, and \emph{Aeppli harmonic forms} as
\begin{align*}
&\mathcal{H}_{\dbar}^{\bullet,\bullet}(M,g):=\ker\Delta_{\dbar}\cap \mathcal{A}^{\bullet,\bullet}(M)=\{\alpha\in \mathcal{A}^{\bullet,\bullet}(M): \dbar\alpha=\dbar^*\alpha=0\} \\
&\mathcal{H}_{BC}^{\bullet,\bullet}(M,g):=\ker\Delta_{BC}\cap \mathcal{A}^{\bullet,\bullet}(M)=\{\alpha\in \mathcal{A}^{\bullet,\bullet}(M): \del\alpha=\dbar\alpha=\del\dbar\ast\alpha=0\}\\
&\mathcal{H}_{A}^{\bullet,\bullet}(M,g):=\ker\Delta_{A}\cap \mathcal{A}^{\bullet,\bullet}(M)=\{\alpha\in \mathcal{A}^{\bullet,\bullet}(M): \del\dbar\alpha=\del\ast\alpha=\dbar\ast\alpha=0\}.
\end{align*}
We note that $\mathcal{H}_{\dbar}^{\bullet,\bullet}(M)$ is $\ast$-invariant and $\alpha\in\calh_{BC}^{p,q}(M)$ if and only if $\ast\alpha\in \calh_A^{n-p,n-q}(M)$, i.e., the map
\[
\ast\colon \calh_{BC}^{p,q}(M)\rightarrow \calh_A^{n-p,n-q}(M)
\]
is an isomorphism for every $p,q$. On the other hand, $\mathcal{H}_{BC}^{\bullet,\bullet}(M)$ and $\calh_{A}^{\bullet,\bullet}(M)$ are invariant under complex conjugation,  whereas $\overline{\calh_{\dbar}^{p,q}(M)}=\mathcal{H}_{\del}^{q,p}(M)$, where the space of \emph{conjugate-Dolbeault harmonic forms} $\mathcal{H}_{\del}^{\bullet,\bullet}(M)$ is the kernel of the conjugate-Dolbeault Laplacian
\[
\Delta_\del:=\del\del^*+\del^*\del.
\]

Thanks to Hodge theory (see e.g., \cite{GriHar,Big69,Big70,Sch01}) adapted to the complexes
\[
\dots\xrightarrow{\dbar}\mathcal{A}^{\bullet,\bullet-1}(M)\xrightarrow{\dbar}\mathcal{A}^{\bullet,\bullet}(M)\xrightarrow{\dbar}\mathcal{A}^{\bullet+1,\bullet+1}(M)\xrightarrow{\dbar}\dots,
\]
and
\[
\mathcal{A}^{\bullet-1,\bullet-1}(M)\xrightarrow{\del\dbar}\mathcal{A}^{\bullet,\bullet}(M)\xrightarrow{\del\oplus\dbar}\mathcal{A}^{\bullet+1,\bullet}(M)\oplus\cala^{\bullet,\bullet+1}(M),
\]
we have the orthogonal decompositions
\begin{align}\label{eq:decomp_hodge:_dolb}
\mathcal{A}^{p,q}(M)&=\calh_{\dbar}^{p,q}(M)\oplus^\perp\im(\dbar|_{\cala^{p,q-1}(M)})\oplus^\perp  \im((\dbar)^*|_{\cala^{p,q+1}(M)})\\
\label{eq:decomp_hodge:_bott}
\mathcal{A}^{p,q}(M)&=\calh_{BC}^{p,q}(M)\oplus^\perp \im(\del\dbar|_{\cala^{p-1,q-1}(M)})\\
&\qquad \oplus^\perp \im(\del^*|_{\cala^{p+1,q}(M)}+\dbar^*|_{\cala^{p,q+1}(M)})\nonumber\\
\label{eq:decomp_hodge:aepl}
\mathcal{A}^{p,q}(M)&=\calh_{A}^{p,q}(M)\oplus^\perp\im(\del|_{\cala^{p-1,q}(M)}+\dbar|_{\cala^{p,q-1}(M)})\\
&\qquad \oplus^\perp  \im((\del\dbar)^*|_{\cala^{p+1,q+1}(M)})\nonumber
\end{align}
and the projections
\begin{gather*}
\calh^{\bullet,\bullet}_{\dbar}(M)\rightarrow H_{\dbar}^{\bullet,\bullet}(M), \qquad \qquad \calh^{\bullet,\bullet}_{BC}(M)\rightarrow H_{BC}^{\bullet,\bullet}(M),\\
\calh^{\bullet,\bullet}_{A}(M)\rightarrow H_{A}^{\bullet,\bullet}(M),
\end{gather*}
which are isomorphisms of complex vector spaces. In general, these maps do not preserve the structure of algebra of Dolbeault and Bott-Chern cohomology spaces induced by the cup product, nor the structure of $H_{BC}^{\bullet,\bullet}(M)$-module of Aeppli cohomology spaces. As a result, the following definitions are well motivated.
\begin{defi}[\cite{TomTor14,AT15,MS24}]
A Hermitian metric $g$ on a compact complex manifold $(M,J)$ is said
\begin{itemize}
    \item \emph{geometrically Dolbeault formal} if $(\calh_{\dbar}^{\bullet,\bullet}(M,g),\wedge)$ is an algebra.
    \item \emph{geometrically Bott-Chern formal} if $(\calh_{BC}^{\bullet,\bullet}(M,g),\wedge)$ is an algebra.
    \item \emph{ABC-geometrically formal} if $\calh^{\bullet, \bullet}:=\calh_{A}^{\bullet,\bullet}(M,g)+\calh_{BC}^{\bullet,\bullet}(M,g)$ is closed with respect to $\del$, $\dbar$, and $\wedge$, or equivalently, if it is closed under $\ast$, satisfies $\del\dbar\equiv 0$, and the inclusion map $\mathcal{H}^{\bullet,\bullet}\hookrightarrow \mathcal{A}^{\bullet,\bullet}(M)$ induces an isomorphism in both Bott-Chern and Aeppli cohomology.
\end{itemize}
A compact Hermitian manifold $(M,J,g)$ is said \emph{Dolbeault geometrically formal}, respectively \emph{Bott-Chern formal, ABC-geometrically formal}, accordingly.
\end{defi}
In view of \cite{AS20}, we define a Hermitian metric $g$ on a compact complex manifold $(M,J)$ to be \emph{geometrically Aeppli formal} if the multiplication of any Aeppli harmonic form $\alpha$ by a Bott-Chern harmonic $\beta$ form is Aeppli harmonic. However, for $\alpha:=1\in\mathcal{H}_{A}^{0,0}(M)$ and any $\beta\in\mathcal{H}_{BC}^{\bullet,\bullet}(M)$, this forces $\mathcal{H}_{BC}^{\bullet,\bullet}(M)=\mathcal{H}_A^{\bullet,\bullet}(M)$. This implies the following.
\begin{lemma}\label{lemma:geom-aeppli-form-def}
Let $(M,J,g)$ be a compact Hermitian manifold. Then the following are equivalent:
\begin{enumerate}
    \item $(M,J,g)$ is geometrically Aeppli formal,
    \item $(M,J,g)$ is ABC-geometrically formal and satisfies the $\del\dbar$-lemma,
    \item the spaces $\mathcal{H}^{\bullet,\bullet}(M):=\mathcal{H}_{\dbar}^{\bullet,\bullet}(M)=\mathcal{H}_{BC}^{\bullet,\bullet}(M)=\mathcal{H}_{A}^{\bullet,\bullet}(M)$ coincide, the space of de Rham harmonic forms decompose in  bigraded forms as $\mathcal{H}_{dR}^{k}(M;\C)=\bigoplus_{p+q=k}\mathcal{H}^{\bullet,\bullet}(M)$ and is closed with respect to $\wedge$.
\end{enumerate}
\begin{proof}
(1) $\implies$ (2). If $(M,J,g)$ is geometrically Aeppli formal, then $\mathcal{H}_{BC}^{\bullet,\bullet}=\mathcal{H}_A^{\bullet,\bullet}$. In particular, the map $H_{BC}\rightarrow H_A$ is an isomorphism and $(M,J)$ satisfies the $\del\dbar$-lemma. The space $\mathcal{H}_A+\mathcal{H}_{BC}$ coincides with $\mathcal{H}_{BC}$, it is closed under $\del$, $\dbar$, and $\wedge$. Hence $g$ is ABC-geometrically formal.

(2) $\implies$ (3). If $g$ is ABC-geometrically formal and satisfies the $\del\dbar$-lemma, then all Aeppli harmonic forms are $d$-closed and, since $H_A\cong H_{BC}$, we have $\mathcal{H}_A=\mathcal{H}_{BC}$. In particular, every Bott-Chern harmonic $(p,q)$-forms is $\del$-closed, $\dbar$-closed, $\del^*$-closed, and $\dbar^*$-closed. Since $H_{dR}^{\bullet}\cong \bigoplus H_{BC}^{\bullet,\bullet}$, it holds that $\mathcal{H}^{k}_{dR,\C}=\bigoplus_{p+q=k}\mathcal{H}_{BC}^{p,q}$, and since $\mathcal{H}_{BC}$ is closed with respect to $\wedge$, so is $\mathcal{H}_{dR,\C}$. Since $H_{\dbar}\cong H_{BC}$, it also holds that $\mathcal{H}_{\dbar}=\mathcal{H}_{BC}$. 

(3) $\implies$ (1). By assumption, $\mathcal{H}_A=\mathcal{H}_{BC}$, and it has a structure of algebra with respect to $\wedge$. Hence, the multiplication of Bott-Chern harmonic forms and Aeppli-harmonic forms is Aeppli harmonic.
\end{proof}
\end{lemma}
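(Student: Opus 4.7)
The plan is to establish the equivalences cyclically, $(1) \Rightarrow (2) \Rightarrow (3) \Rightarrow (1)$, since each step becomes progressively easier once the preceding one is available.

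For $(1) \Rightarrow (2)$, the decisive observation is that the constant function $1$ lies in $\mathcal{H}_A^{0,0}(M,g)$. Geometric Aeppli formality then forces $\beta \cdot 1 = \beta$ to be Aeppli harmonic for every $\beta \in \mathcal{H}_{BC}^{\bullet,\bullet}(M,g)$, yielding the inclusion $\mathcal{H}_{BC}^{\bullet,\bullet} \subseteq \mathcal{H}_A^{\bullet,\bullet}$. Pairing this with the $\ast$-isomorphism $\mathcal{H}_{BC}^{p,q} \cong \mathcal{H}_A^{n-p,n-q}$ and a bidegree-by-bidegree dimension comparison upgrades the inclusion to the equality $\mathcal{H}_{BC} = \mathcal{H}_A$. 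Via the canonical Hodge-theoretic isomorphisms $\mathcal{H}_{BC} \cong H_{BC}$ and $\mathcal{H}_A \cong H_A$, this translates into the natural map $H_{BC}^{\bullet,\bullet} \to H_A^{\bullet,\bullet}$ being an isomorphism, which is a standard characterization of the $\del\dbar$-lemma. ABC-geometric formality is then automatic: $\mathcal{H}_A + \mathcal{H}_{BC}$ collapses to $\mathcal{H}_{BC}$, which is closed under $\del$ and $\dbar$ since its elements are both $\del$- and $\dbar$-closed, and under $\wedge$ by hypothesis.

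For $(2) \Rightarrow (3)$, I would first upgrade the two hypotheses to the equality $\mathcal{H}_A = \mathcal{H}_{BC}$: ABC-formality ensures that the combined space $\mathcal{H} := \mathcal{H}_A + \mathcal{H}_{BC}$ projects isomorphically onto both $H_A$ and $H_{BC}$, while the $\del\dbar$-lemma identifies these cohomologies, so the two harmonic subspaces coincide inside $\mathcal{A}^{\bullet,\bullet}(M)$. A Bott-Chern harmonic form is then $\del$-, $\dbar$-, $\del^*$-, and $\dbar^*$-closed (using the $\ast$-identification with Aeppli harmonic forms), hence $d$- and $d^*$-closed, so it represents a de Rham harmonic class. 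The $\del\dbar$-lemma supplies the bigraded decomposition $H_{dR}^k(M;\C) \cong \bigoplus_{p+q=k} H_{BC}^{p,q}$, which lifts to the harmonic level to give $\mathcal{H}_{dR}^k(M;\C) = \bigoplus_{p+q=k} \mathcal{H}^{p,q}$. The isomorphism $H_{\dbar} \cong H_{BC}$ together with the harmonic identifications then forces $\mathcal{H}_{\dbar} = \mathcal{H}_{BC}$, and closure under $\wedge$ at the harmonic level is inherited from ABC-formality.

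The implication $(3) \Rightarrow (1)$ is essentially tautological: if the three harmonic spaces coincide in a common $\mathcal{H}^{\bullet,\bullet}$ that is closed under $\wedge$, then multiplying any Bott-Chern harmonic form by any Aeppli harmonic form stays in $\mathcal{H}^{\bullet,\bullet}$, and is in particular Aeppli harmonic. The main technical subtlety lies in the dimension count within $(1) \Rightarrow (2)$: the $\ast$-duality between Bott-Chern and Aeppli harmonic spaces is what turns the one-sided inclusion $\mathcal{H}_{BC} \subseteq \mathcal{H}_A$, obtained from the universal test element $1$, into an actual equality.
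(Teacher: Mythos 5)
Your proposal is correct and follows essentially the same route as the paper: the test element $1\in\mathcal{H}_A^{0,0}$ together with the $\ast$-duality dimension count for (1)$\Rightarrow$(2), the identification $\mathcal{H}_A=\mathcal{H}_{BC}$ and the bigraded decomposition for (2)$\Rightarrow$(3), and the tautological (3)$\Rightarrow$(1). The only (minor) divergence is in (2)$\Rightarrow$(3), where the paper first shows that Aeppli harmonic forms are $d$-closed, so that $\mathcal{H}_A\subseteq\mathcal{H}_{BC}$ follows directly from the defining equations, before invoking $H_A\cong H_{BC}$; you instead infer the coincidence of the two harmonic subspaces from the induced isomorphisms alone, which still requires such a one-sided inclusion to be fully rigorous.
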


\begin{defi}[\cite{AT15}]
The \emph{triple ABC-Massey product} of three Bott-Chern cohomology classes $[\alpha]\in H_{BC}^{p,q}(M)$, $[\beta]\in H_{BC}^{r,s}(M)$, and $[\gamma]\in H_{BC}^{u,v}(M)$ satisfying $\del\dbar f_{\alpha\beta}=(-1)^{p+q}\alpha\wedge\beta$ and $\del\dbar f_{\beta\gamma}=(-1)^{r+s}\beta\wedge\gamma$ for some $f_{\alpha\beta}\in \cala^{p+r-1,q+s-1}(M)$, $f_{\beta\gamma}\in\cala^{r+u-1,s+v-1}(M)$ is given by the class
\begin{align*}
\langle[\alpha],[\beta],[\gamma]\rangle_{ABC}&:=[(-1)^{p+q}\alpha\wedge f_{\beta\gamma}-(-1)^{r+s}f_{\alpha\beta}\wedge \gamma]\\
&\in \frac{H_A^{p+r+u-1}(M)}{[\alpha]\cup H_A^{r+u-1,s+v-1}(M)+H_A^{p+r-1,q+s-1}(M)\cup[\gamma]}.
\end{align*}
\end{defi}
\begin{defi}[\cite{NT78,MS24}]
A compact complex manifold $(M,J)$ is
\begin{itemize}
    \item \emph{Dolbeault formal} if $(\mathcal{A}^{\bullet,\bullet}(M),\dbar)$ is equivalent, as a differential bigraded algebra, to a bidifferential graded algebra $(\mathcal{B},\dbar_{\mathcal{B}})$ such that $\dbar_{\mathcal
    B}\equiv 0$,
    \item \emph{weakly formal} (respectively, \emph{strongly formal}) if $(\mathcal{A}^{\bullet,\bullet},\del,\dbar)$ is equivalent, as a bidifferential bigraded algebra, to a bidifferential bigraded algebra $(\mathcal{B},\del_{\mathcal{B}},\dbar_{\mathcal{B}})$ such that $\del_{\mathcal{B}}\dbar_{\mathcal{B}}\equiv 0$, respectively, $\del_{\mathcal{B}}\equiv 0$, $\dbar_{\mathcal{B}}\equiv 0$.
\end{itemize}
\end{defi}
The relations between the Hermitian geometrically formal metrics and formality notions can be summarized as follows:
\begin{equation}\label{diag:metrics}
    \begin{tikzcd}
        & \text{K\"ahler geom. form.}\ar[d]  &\\
        &\text{Geom. Aeppli form.}\ar[dl]\ar[d]\ar[dr]&\\
        \text{ABC-geom. form.}\ar[d]\ar[ddr] &\text{strong form.}\ar[dd, shift right=1.50ex]\ar[d, shift left=1.50ex] & \text{Geom. $\dbar$ form.}\ar[d]\\
        \text{Geom. BC form.}\ar[d] &\,\,\,  \text{$\del\dbar$-lemma}\ar[r]&\text{$\dbar$ formality}\ar[d]&&\\
        \text{trivial ABC-Massey prod.}& \text{weak form.}\ar[l]& \text{trivial $\dbar$-Massey prod.}
    \end{tikzcd}
\end{equation}

\begin{enumerate}[label=(\roman*)]
    \item $\del\dbar$-lemma implies Dolbeault formality \cite{NT78},
    \item geometric Dolbeault formality implies Dolbeault formality \cite{TomTor14},
    \item  geometric Bott-Chern formality, as well as weak formality, implies the vanishing of the triple ABC-Massey products \cite{AT15,MS24},
    \item strong formality implies weak formality and $\del\dbar$-lemma \cite{MS24},
    \item $\del\dbar$-lemma implies that  weak formality and strong formality are equivalent \cite{MS24},
    \item ABC-geometric formality implies geometric Bott-Chern formality and weak formality \cite{MS24},
    \item geometric Aeppli formality implies strong formality, geometric formality and formality in every sense. Moreover, every classical, Dolbeault, and ABC-Massey product vanishes,
    \item K\"ahler geometric formality implies geometric Aeppli formality.
    \end{enumerate}
We prove the first statement of (vii).
\begin{prop}\label{prop:0}
Let $M$ be a compact complex manifold admitting a geometrically Aeppli formal metric $g$. Then $M$ is strongly formal.
\begin{proof}
By Lemma \ref{lemma:geom-aeppli-form-def},  $g$ is ABC-geometrically formal and $(M,J)$ satisfies the $\del\dbar$-lemma. Hence, it is also strongly formal by (iv) and (v).
\end{proof}
\end{prop}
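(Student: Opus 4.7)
The plan is to read Proposition \ref{prop:0} as a direct bookkeeping consequence of the implications already catalogued in (i)--(viii), together with the equivalence established in Lemma \ref{lemma:geom-aeppli-form-def}. In particular, I do not expect to need any fresh harmonic-theoretic input: the proof should be a two-step logical chain that first extracts two simultaneous consequences from geometric Aeppli formality, and then combines them through (v) and (vi).

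More concretely, the first step is to invoke Lemma \ref{lemma:geom-aeppli-form-def}, which gives the equivalence of geometric Aeppli formality with the conjunction of ABC-geometric formality and the $\del\dbar$-lemma. So from the single hypothesis on $g$ I would harvest the two facts that $(M,J,g)$ is ABC-geometrically formal and that $(M,J)$ satisfies the $\del\dbar$-lemma.

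The second step is to promote this pair to strong formality. By item (vi), ABC-geometric formality forces weak formality of $(M,J)$. By item (v), once the $\del\dbar$-lemma holds, weak formality and strong formality coincide. Chaining these two implications yields strong formality of $(M,J)$, which is the desired conclusion.

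The main (minor) point to watch is that both (v) and (vi) genuinely apply: (vi) uses only the ABC-geometric formality of the metric, while (v) requires the $\del\dbar$-lemma at the level of the complex manifold, and Lemma \ref{lemma:geom-aeppli-form-def} delivers both hypotheses simultaneously, so no further verification is needed. There is no serious obstacle, and the proof should fit in two or three lines in the style of the short proof already adopted for the other statements in this section.
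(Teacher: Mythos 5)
Your proof is correct and follows exactly the paper's own argument: Lemma \ref{lemma:geom-aeppli-form-def} yields ABC-geometric formality together with the $\del\dbar$-lemma, and then weak formality is upgraded to strong formality. In fact your citation of (vi) and (v) is the accurate one; the paper's proof cites (iv) and (v), where (iv) appears to be a slip for (vi), since (iv) runs in the wrong direction.
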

We observe the following useful facts.
\begin{lemma}\label{lemma:change-metric-dolb}
Let $(M,J,g)$ be a geometrically Dolbeault formal manifold  and let $\alpha\in\mathcal{H}_{\dbar}^{1,1}(M)$ be  such that $\alpha^k\neq 0$. For every $f\in\mathcal{A}^{1,0}(M)$, the form $\alpha':=\alpha+\dbar f$, satisfies $(\alpha')^k\neq 0$.
\begin{proof}
If $g$ is geometrically Dolbeault formal, $0\neq \alpha^k\in\mathcal{H}_{\dbar}^{k,k}(M)$. It is easy to see that $(\alpha')^k=\alpha^k+\dbar\gamma$. By the direct sum decomposition \eqref{eq:decomp_hodge:_dolb} for $(k,k)$-forms, it holds that $(\alpha')^k\neq0$. 
\end{proof}
\end{lemma}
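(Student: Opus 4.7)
The plan is to expand $(\alpha')^k$ via the binomial theorem and isolate the purely $\dbar$-exact part so as to apply the Hodge decomposition \eqref{eq:decomp_hodge:_dolb}. Since $\alpha$ is Dolbeault harmonic and $g$ is geometrically Dolbeault formal, I first note that $\alpha^k$ lies in $\mathcal{H}_{\dbar}^{k,k}(M)$, and by assumption it is nonzero; this class plays the role of the "harmonic component" of $(\alpha')^k$.

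Next I would write
\begin{equation*}
(\alpha')^k = (\alpha+\dbar f)^k = \alpha^k + \sum_{j=1}^{k}\binom{k}{j}\alpha^{k-j}\wedge (\dbar f)^j
\end{equation*}
and show that every $j\geq 1$ summand is $\dbar$-exact. This is the key step: $\alpha^{k-j}$ has even total degree and is $\dbar$-closed (harmonicity), and $\dbar f$ is itself $\dbar$-closed, so a direct calculation gives
\begin{equation*}
\alpha^{k-j}\wedge(\dbar f)^j = \dbar\bigl(\alpha^{k-j}\wedge f\wedge (\dbar f)^{j-1}\bigr)
\end{equation*}
up to sign. Summing, one obtains $(\alpha')^k = \alpha^k + \dbar\gamma$ for some $\gamma\in\mathcal{A}^{k,k-1}(M)$.

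Finally, I would invoke the orthogonal Dolbeault Hodge decomposition \eqref{eq:decomp_hodge:_dolb} in bidegree $(k,k)$: the component $\alpha^k$ lives in $\mathcal{H}_{\dbar}^{k,k}(M)$ while $\dbar\gamma$ lives in the orthogonal summand $\im(\dbar|_{\mathcal{A}^{k,k-1}(M)})$. Since the decomposition is a direct sum and $\alpha^k\neq 0$, the sum $\alpha^k + \dbar\gamma$ cannot vanish, hence $(\alpha')^k\neq 0$.

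The only mildly delicate point is the binomial exactness argument, where one needs to keep track of the $(-1)$-signs coming from moving $\dbar$ past the even-degree factor $\alpha^{k-j}$; this is routine but should be stated carefully. Geometric Dolbeault formality is used only to ensure $\alpha^k$ itself is harmonic, which is what makes the Hodge-orthogonality argument work.
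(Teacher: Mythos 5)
Your proof is correct and follows exactly the same route as the paper's: expand $(\alpha')^k$, identify the correction term as $\dbar$-exact, and conclude via the orthogonality of $\mathcal{H}_{\dbar}^{k,k}(M)$ and $\im(\dbar)$ in the Hodge decomposition \eqref{eq:decomp_hodge:_dolb}. The only difference is that you spell out the binomial/exactness computation that the paper dismisses as ``easy to see,'' and you correctly identify that geometric Dolbeault formality is needed precisely to guarantee $\alpha^k\in\mathcal{H}_{\dbar}^{k,k}(M)$.
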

\begin{lemma}\label{lemma:change-metric-bott}
Let $(M,J,g)$ be a geometrically Bott-Chern formal manifold  and let $\alpha\in\mathcal{H}_{BC}^{1,1}(M)$ be  such that $\alpha^k\neq 0$. For every $f\in\mathcal{C}^{\infty}(M)$, the form $\alpha':=\alpha+\del\dbar f$, satisfies $(\alpha')^k\neq 0$.
\begin{proof}
Analogous to the proof of Lemma \ref{lemma:change-metric-dolb}.
\end{proof}
\end{lemma}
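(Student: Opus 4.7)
The plan is to mimic almost verbatim the argument for Lemma \ref{lemma:change-metric-dolb}, with the orthogonal decomposition \eqref{eq:decomp_hodge:_dolb} replaced by the Bott-Chern decomposition \eqref{eq:decomp_hodge:_bott}. The starting point is that, since $g$ is geometrically Bott-Chern formal and $\alpha\in\calh_{BC}^{1,1}(M)$, the power $\alpha^{k}$ lies in $\calh_{BC}^{k,k}(M)$ and is nonzero by hypothesis. The goal is to show that the difference $(\alpha')^{k}-\alpha^{k}$ is $\del\dbar$-exact, and then to appeal to \eqref{eq:decomp_hodge:_bott} to conclude that the sum cannot vanish.

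First I would write the binomial expansion
\[
(\alpha')^{k}=\bigl(\alpha+\del\dbar f\bigr)^{k}=\alpha^{k}+\sum_{j=1}^{k}\binom{k}{j}\alpha^{k-j}\wedge(\del\dbar f)^{j},
\]
valid because $\alpha$ and $\del\dbar f$ are both of even total degree and hence commute under wedge. Each term with $j\ge 1$ carries at least one factor $\del\dbar f$, and I would isolate that factor using the elementary identity
\[
\del\dbar f\wedge\beta=\del\dbar(f\beta),
\]
which holds for every $d$-closed form $\beta$: indeed $\dbar(f\beta)=\dbar f\wedge\beta$ and then $\del(\dbar f\wedge\beta)=\del\dbar f\wedge\beta$, using $\del\beta=\dbar\beta=0$. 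Applying this with $\beta=\alpha^{k-j}\wedge(\del\dbar f)^{j-1}$, which is $d$-closed because both $\alpha$ and $\del\dbar f$ are, one obtains
\[
\alpha^{k-j}\wedge(\del\dbar f)^{j}=\del\dbar\bigl(f\cdot\alpha^{k-j}\wedge(\del\dbar f)^{j-1}\bigr).
\]
Summing over $j$ yields $(\alpha')^{k}=\alpha^{k}+\del\dbar\gamma$ for an explicit form $\gamma\in\mathcal{A}^{k-1,k-1}(M)$.

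Finally I would invoke the Bott-Chern Hodge decomposition \eqref{eq:decomp_hodge:_bott}: the summand $\calh_{BC}^{k,k}(M)$ is orthogonal to $\im(\del\dbar)$, so a nonzero Bott-Chern harmonic representative cannot be cancelled by a $\del\dbar$-exact correction. Since $\alpha^{k}\in\calh_{BC}^{k,k}(M)\setminus\{0\}$ and $\del\dbar\gamma\in\im(\del\dbar|_{\cala^{k-1,k-1}(M)})$, the sum $(\alpha')^{k}=\alpha^{k}+\del\dbar\gamma$ is nonzero, which is exactly what we wanted.

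There is no real obstacle: the only point requiring a moment of care is the sign-free identity $\del\dbar f\wedge\beta=\del\dbar(f\beta)$ for $d$-closed $\beta$, which is what replaces the one-line observation $(\alpha')^{k}=\alpha^{k}+\dbar\gamma$ used in Lemma \ref{lemma:change-metric-dolb}. Everything else is the same orthogonality argument as in the Dolbeault case, with \eqref{eq:decomp_hodge:_bott} playing the role of \eqref{eq:decomp_hodge:_dolb}.
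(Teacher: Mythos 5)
Your proof is correct and is exactly the argument the paper intends: the paper's proof of this lemma simply says "analogous to Lemma \ref{lemma:change-metric-dolb}", i.e.\ write $(\alpha')^k=\alpha^k+\del\dbar\gamma$ and invoke the orthogonality of $\calh_{BC}^{k,k}(M)$ and $\im(\del\dbar)$ in the decomposition \eqref{eq:decomp_hodge:_bott}. Your write-up just supplies the routine details (the binomial expansion and the identity $\del\dbar f\wedge\beta=\del\dbar(f\beta)$ for $d$-closed $\beta$) that the paper leaves implicit.
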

\section{Complex and topological obstructions}\label{sec:obstr}
In this section, we prove several obstructions related to the cohomology and topology of compact complex manifolds admitting geometrically Dolbeault, Bott-Chern, and Aeppli formal metrics.

We start by observing the following fact, which implies a $\del\dbar$-lemma at the level of $(p,0)$-forms.
\begin{prop}\label{prop:1}
Let $(M,J,g)$ be a $n$-dimensional geometrically Bott-Chern formal manifold. Then, every holomorphic $p$-form is $d$-closed, $p\in\{0,1,\dots, n\}$. 
\begin{proof}
We start by noting that for $\alpha\in\cala^{n,0}(M)$, $d\alpha=\dbar\alpha$, hence, for a $(n,0)$-form $\alpha$, $d\alpha=0$ if and only if $\dbar\alpha=0$.

Fix now $p<n$ and let $\alpha$ be any holomorphic $p$-form, i.e., $\alpha\in\cala^{p,0}(M)$ such that $\dbar\alpha=0$. 

Suppose by contradiction that $\del\alpha\neq0$. Then, $d(\del\alpha)=0$ and by bidegree reasons $(\del\dbar)^*\del\alpha=0$, implying that $\del\alpha\in\calh^{p+1,0}_{BC}(M)$ and, by conjugation, $\dbar\overline{\alpha}\in\calh^{0,p+1}_{BC}(M)$. Since $g$ is geometrically formal, $0\neq \del\alpha\wedge \dbar\overline{\alpha}\in\calh_{BC}^{p+1,p+1}(M)$. However, since $\del\dbar(-(1)^{|\alpha|}\alpha\wedge\overline{\alpha})=\del\alpha\wedge\dbar\overline{\alpha}$, it follows that
\[
0\neq \del\alpha\wedge \dbar\overline{\alpha}\in \calh_{BC}^{p+1,p+1}(M)\cap \im\del\dbar\cap\cala^{p+1,p+1}(M),
\]
which is a contradiction, by the  direct sum decomposition \eqref{eq:decomp_hodge:_bott} of $\cala^{p+1,p+1}(M)$.
\end{proof}
\end{prop}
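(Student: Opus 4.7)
The plan is to argue by contradiction: assume some holomorphic $p$-form $\alpha$ has $\del\alpha\neq 0$, and force a clash with the Hodge decomposition \eqref{eq:decomp_hodge:_bott}.

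First I would dispose of the extremal case $p=n$ directly: if $\alpha\in\cala^{n,0}(M)$, then $\del\alpha\in\cala^{n+1,0}(M)=0$, so $\dbar\alpha=0$ is equivalent to $d\alpha=0$. So assume henceforth $p<n$ and $\alpha\in\cala^{p,0}(M)$ with $\dbar\alpha=0$.

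Next I would show that, if $\del\alpha\neq 0$, then $\del\alpha\in\calh_{BC}^{p+1,0}(M,g)$. The three conditions $\del\alpha'=\dbar\alpha'=\del\dbar\ast\alpha'=0$ characterizing Bott-Chern harmonicity are all immediate for $\alpha':=\del\alpha$: the first by $\del^2=0$; the second because $\dbar\del\alpha=-\del\dbar\alpha=0$ since $\alpha$ is holomorphic; and the third by bidegree, as $\ast\del\alpha\in\cala^{n-p-1,n}(M)$ has no room for $\dbar$. Since $\calh_{BC}^{\bullet,\bullet}(M,g)$ is invariant under complex conjugation, $\dbar\overline\alpha=\overline{\del\alpha}$ is likewise Bott-Chern harmonic. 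Now invoking geometric Bott-Chern formality of $g$, the wedge product
\begin{equation*}
\del\alpha\wedge\dbar\overline\alpha \;\in\; \calh_{BC}^{p+1,p+1}(M,g).
\end{equation*}

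The key identity is that this same form lies in $\im\,\del\dbar$. Using $\dbar\alpha=0$ and $\del\overline\alpha=0$, a short computation gives
\begin{equation*}
\del\dbar(\alpha\wedge\overline\alpha)=(-1)^{p}\,\del\alpha\wedge\dbar\overline\alpha,
\end{equation*}
up to a sign I would fix by the Leibniz rule. Combined with the orthogonal decomposition \eqref{eq:decomp_hodge:_bott} of $\cala^{p+1,p+1}(M)$, this forces $\del\alpha\wedge\dbar\overline\alpha=0$.

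The remaining point, which I expect to be the only substantive one, is to observe that this contradicts $\del\alpha\neq 0$. For any nonzero $(q,0)$-form $\beta$, the form $i^{q^{2}}\beta\wedge\overline\beta$ is pointwise a nonnegative $(q,q)$-form that is strictly positive exactly where $\beta$ does not vanish; in particular, $\beta\wedge\overline\beta\not\equiv 0$ whenever $\beta\not\equiv 0$. Applied to $\beta=\del\alpha$, this gives $\del\alpha\wedge\overline{\del\alpha}=\del\alpha\wedge\dbar\overline\alpha\neq 0$, contradicting the previous step. Hence $\del\alpha=0$, which together with $\dbar\alpha=0$ yields $d\alpha=0$.
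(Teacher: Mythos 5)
Your argument is correct and follows essentially the same route as the paper: show $\del\alpha\in\calh_{BC}^{p+1,0}(M)$ and $\dbar\overline\alpha\in\calh_{BC}^{0,p+1}(M)$, use geometric Bott--Chern formality to place $\del\alpha\wedge\dbar\overline\alpha$ in $\calh_{BC}^{p+1,p+1}(M)\cap\im\del\dbar$, and contradict the orthogonal decomposition \eqref{eq:decomp_hodge:_bott}. The only difference is that you make explicit, via the pointwise positivity of $i^{q^2}\beta\wedge\overline\beta$ for a nonzero $(q,0)$-form $\beta$, why $\del\alpha\wedge\dbar\overline\alpha\neq 0$ --- a detail the paper leaves implicit --- which is a welcome clarification rather than a change of method.
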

\begin{cor}\label{cor:ddbarlemma-bc}
On a geometrically Bott-Chern formal manifold $(M,J,g)$ it holds that
\begin{align*} 
\calh_{BC}^{\bullet,0}(M)=\calh_{\dbar}^{\bullet,0}(M),\qquad 
\calh_{BC}^{0,\bullet}(M)=\calh_{\del}^{0,\bullet}(M),\\
\calh_A^{\bullet,n}(M)=\calh_{\dbar}^{\bullet,n}(M),\qquad  \calh_A^{n,\bullet}(M)=\calh_{\del}^{n,\bullet}(M).
\end{align*}
\begin{proof}
We look at the first equality, as the others follow by either complex conjugation, applying $\ast$, or both.

The inclusion $\calh_{BC}^{\bullet,0}(M)\subseteq\calh_{\dbar}^{\bullet,0}(M)$ is always true, since
$$\calh_{BC}^{\bullet,0}(M)=\ker d\cap \cala^{\bullet,0}(M)\subseteq \ker\dbar\cap \cala^{\bullet,0}(M)=\calh_{\dbar}^{\bullet,0}(M).
$$
The opposite inclusion is the thesis of Proposition \ref{prop:1}.
\end{proof}
\end{cor}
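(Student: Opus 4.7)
The plan is to reduce all four equalities to the first one and then invoke Proposition \ref{prop:1}. Let me focus first on $\calh_{BC}^{\bullet,0}(M)=\calh_{\dbar}^{\bullet,0}(M)$. The key observation is that the harmonicity conditions collapse drastically on $(p,0)$-forms. For $\alpha\in\cala^{p,0}(M)$, we have $\dbar^*\alpha=0$ automatically, since $\dbar^*\alpha$ would sit in bidegree $(p,-1)$. Likewise $\ast\alpha\in\cala^{n-p,n}(M)$, so $\dbar\ast\alpha\in\cala^{n-p,n+1}(M)=0$, hence $\del\dbar\ast\alpha=0$ for free. Therefore
\[
\calh_{BC}^{p,0}(M)=\{\alpha\in\cala^{p,0}(M):\del\alpha=\dbar\alpha=0\}=\ker d\cap\cala^{p,0}(M),
\]
while $\calh_{\dbar}^{p,0}(M)=\ker\dbar\cap\cala^{p,0}(M)$ is precisely the space of holomorphic $p$-forms.

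The inclusion $\calh_{BC}^{p,0}(M)\subseteq\calh_{\dbar}^{p,0}(M)$ is then tautological, since $d$-closed forces $\dbar$-closed. The opposite inclusion is exactly the statement of Proposition \ref{prop:1}: on a geometrically Bott-Chern formal manifold, any holomorphic $p$-form is automatically $d$-closed, and therefore Bott-Chern harmonic.

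For the remaining three identities, I would exploit the symmetries recorded in Section \ref{sec:not}. Complex conjugation preserves $\calh_{BC}^{\bullet,\bullet}(M)$ and sends $\calh_{\dbar}^{p,q}(M)$ to $\calh_{\del}^{q,p}(M)$; conjugating the identity $\calh_{BC}^{p,0}(M)=\calh_{\dbar}^{p,0}(M)$ therefore yields $\calh_{BC}^{0,p}(M)=\calh_{\del}^{0,p}(M)$. Next, the Hodge $\ast$ operator restricts to an isomorphism $\ast\colon\calh_{BC}^{p,q}(M)\to\calh_A^{n-p,n-q}(M)$ and preserves $\calh_{\dbar}^{\bullet,\bullet}(M)$ as noted in the preliminaries; applying $\ast$ to the first identity gives $\calh_A^{n-p,n}(M)=\calh_{\dbar}^{n-p,n}(M)$, i.e.\ the third equality. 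The fourth identity follows analogously by combining $\ast$ with complex conjugation (or by applying $\ast$ to the second identity).

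There is no serious technical obstacle: the content of the corollary is entirely concentrated in Proposition \ref{prop:1}, and the remaining steps are a bookkeeping exercise tracking how conjugation and $\ast$ permute the bidegrees of the various spaces of harmonic forms. The only point requiring a moment of care is verifying, via the bidegree argument above, that the extra Bott-Chern condition $\del\dbar\ast\alpha=0$ imposes nothing beyond $d$-closedness on a $(p,0)$-form, so that the reverse inclusion genuinely reduces to the $(p,0)$-level $\del\dbar$-lemma provided by Proposition \ref{prop:1}.
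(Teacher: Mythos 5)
Your proposal is correct and follows essentially the same route as the paper: the first equality is reduced to the tautological inclusion $\ker d\cap\cala^{p,0}(M)\subseteq\ker\dbar\cap\cala^{p,0}(M)$ plus Proposition \ref{prop:1} for the converse, and the remaining three identities are obtained from the symmetries of conjugation and the Hodge $\ast$ operator. Your explicit bidegree check that $\dbar^*$ and $\del\dbar\ast$ vanish identically on $(p,0)$-forms is a detail the paper leaves implicit, but the argument is the same.
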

On geometrically formal differentiable manifolds, Kotschick \cite[Lemma 4]{Kot01} has proved that the inner product of any two harmonic $k$-forms and the pointwise norm of any harmonic form are constant.  With similar arguments, we can prove the analogous facts.

\begin{prop}\label{prop:dolb-const-norm}
On a geometrically Dolbeault formal manifold $(M,J,g)$, the inner product of any two Dolbeault (respectively, conjugate-Dolbeault) harmonic forms is constant and, in particular, every Dolbeault (respectively, conjugate-Dolbeault) harmonic form has constant pointwise norm.
\begin{proof}
Let $\alpha$ be a Dolbeault harmonic form. Then, $\ast\alpha$ is Dolbeault harmonic and, since $g$ is geometrically Dolbeault formal, $\alpha\wedge\ast\alpha=|\alpha|^2\frac{\omega^n}{n!}\in\calh_{\dbar}^{n,n}(M)$. By definition of Dolbeault harmonic forms,  $\dbar(|\alpha|^2)=0$. Since $M$ is compact, $|\alpha|^2$ is constant. The statement on the inner product follows by polarization and the proof for conjugate-Dolbeault harmonic forms by complex conjugation.
\end{proof}
\end{prop}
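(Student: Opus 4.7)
The plan is to exploit two structural facts already recorded in Section 2: that $\mathcal{H}_{\dbar}^{\bullet,\bullet}(M,g)$ is $\ast$-invariant, and that, by geometric Dolbeault formality, it is an algebra under the wedge product. Given $\alpha,\beta \in \mathcal{H}_{\dbar}^{p,q}(M,g)$, the first fact yields $\ast\beta \in \mathcal{H}_{\dbar}^{n-p,n-q}(M,g)$, so combining this with the defining identity
\[
\alpha\wedge\ast\beta = g(\alpha,\beta)\,\frac{\omega^n}{n!},
\]
geometric Dolbeault formality forces the $(n,n)$-form $g(\alpha,\beta)\,\omega^n/n!$ to lie in $\mathcal{H}_{\dbar}^{n,n}(M,g)$.

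The key step is then to identify $\mathcal{H}_{\dbar}^{n,n}(M,g)$ as the one-dimensional space spanned by $\omega^n/n!$. Serre duality gives that $H_{\dbar}^{n,n}(M)$ is dual to $H_{\dbar}^{0,0}(M)=\C$, hence one-dimensional, and $\omega^n/n!=\ast 1$ already lies in $\mathcal{H}_{\dbar}^{n,n}(M,g)$. Consequently $g(\alpha,\beta)\,\omega^n/n!$ must be a scalar multiple of $\omega^n/n!$, so $g(\alpha,\beta)$ is a constant function; specialising $\beta=\alpha$ delivers the pointwise-norm statement. A more hands-on alternative, avoiding Serre duality, is to unwind $\dbar^*\sim \ast\del\ast$ applied to $g(\alpha,\beta)\,\omega^n/n!$ to obtain $\del\,g(\alpha,\beta)=0$, and then conclude that $g(\alpha,\beta)$ is holomorphic, hence constant on the connected compact manifold $M$.

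For the conjugate-Dolbeault case, I would leverage the isomorphism $\overline{\mathcal{H}_{\dbar}^{p,q}(M,g)}=\mathcal{H}_{\del}^{q,p}(M,g)$ from Section 2. Given $\alpha,\beta\in\mathcal{H}_{\del}^{q,p}(M,g)$, the conjugates $\bar\alpha,\bar\beta$ are Dolbeault harmonic, and the $\C$-antilinear extension of $g$ gives $g(\alpha,\beta)=\overline{g(\bar\alpha,\bar\beta)}$; by the already-treated case, $g(\bar\alpha,\bar\beta)$ is constant, so $g(\alpha,\beta)$ is too. The only substantive point in the argument is the identification of $\mathcal{H}_{\dbar}^{n,n}(M,g)$, which is where compactness (and connectedness) of $M$ enters; everything else is formal from $\ast$-invariance and the algebra property of the harmonic spaces.
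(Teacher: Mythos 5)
Your proposal is correct and follows essentially the same route as the paper: $\ast$-invariance of $\mathcal{H}_{\dbar}^{\bullet,\bullet}$ plus the algebra property force $\alpha\wedge\ast\beta=g(\alpha,\beta)\,\omega^n/n!$ to be Dolbeault harmonic, and harmonicity of a function times the volume form on a compact connected manifold forces that function to be constant. The only cosmetic differences are that you treat the inner product of two forms directly where the paper does the norm first and polarizes, and that you spell out the final step (one-dimensionality of $\mathcal{H}_{\dbar}^{n,n}$ via Serre duality, or the $\dbar^*$ computation) which the paper compresses into ``$\dbar(|\alpha|^2)=0$''.
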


\begin{prop}\label{prop:abc-form-const}
On a ABC-geometrically formal manifold $(M,J,g)$, the inner product of any two Bott-Chern (respectively, Aeppli) harmonic forms is constant and, in particular, every Bott-Chern (respectively, Aeppli) harmonic form has constant pointwise norm.
\begin{proof}
It suffices to observe that $\alpha\in\mathcal{H}_{BC}^{p,q}(M)$, then $\ast\alpha\in\mathcal{H}_A^{n-p,n-q}(M)$. Since $g$ is ABC-geometrically formal, $\alpha\wedge \ast\alpha=||\alpha||^2\frac{\omega^n}{n!}\in\mathcal{H}_A^{n,n}(M)+\mathcal{H}_{BC}^{n,n}(M)$. In particular, $\del\dbar\ast(||\alpha||^2\frac{\omega^n}{n!})=0$, i.e., $\del\dbar||\alpha||^2=0$, and $||\alpha||^2$ is pluriharmonic. Since $M$ is compact, however, $||\alpha||^2$ must be constant. 
\end{proof}
\end{prop}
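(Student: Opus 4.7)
The plan is to exploit, in full, the characterization of ABC-geometric formality given in the definition: the space $\mathcal{H}^{\bullet,\bullet}:=\mathcal{H}_{A}^{\bullet,\bullet}(M,g)+\mathcal{H}_{BC}^{\bullet,\bullet}(M,g)$ is closed under $\ast$, under $\wedge$, and satisfies $\del\dbar\equiv 0$. The Hodge star operator already provides the canonical way to extract pointwise inner products from wedge products, namely $\alpha\wedge\ast\beta=g(\alpha,\beta)\,\omega^n/n!$, so the strategy is to run this identity against the available algebraic closure properties of $\mathcal{H}^{\bullet,\bullet}$.

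Concretely, given two Bott--Chern harmonic forms $\alpha,\beta\in\calh_{BC}^{p,q}(M)$, the isomorphism $\ast\colon\calh_{BC}^{p,q}(M)\to\calh_A^{n-p,n-q}(M)$ (already recorded in Section \ref{sec:not}) places $\ast\beta$ in $\mathcal{H}_A^{n-p,n-q}\subseteq\mathcal{H}^{\bullet,\bullet}$, and $\alpha$ is itself in $\mathcal{H}^{\bullet,\bullet}$. Invoking closure of $\mathcal{H}^{\bullet,\bullet}$ under $\wedge$, I obtain
\[
g(\alpha,\beta)\frac{\omega^n}{n!}=\alpha\wedge\ast\beta\in\mathcal{H}^{n,n}.
\]
Then I apply $\ast$: by closure of $\mathcal{H}^{\bullet,\bullet}$ under the Hodge star, $g(\alpha,\beta)$ itself lies in $\mathcal{H}^{0,0}$. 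Since $\del\dbar$ vanishes identically on $\mathcal{H}^{\bullet,\bullet}$, the function $g(\alpha,\beta)$ is pluriharmonic. On a compact (connected) manifold the maximum principle forces any pluriharmonic function to be constant; specializing $\beta=\alpha$ yields the constancy of the pointwise norm $\|\alpha\|^2$.

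The parallel statement for Aeppli harmonic forms is completely symmetric: given $\alpha,\beta\in\calh_A^{p,q}(M)$, one has $\ast\beta\in\calh_{BC}^{n-p,n-q}(M)$, so the same argument applies verbatim, using that $\mathcal{H}^{\bullet,\bullet}$ is closed under both $\ast$ and $\wedge$ and kills $\del\dbar$. Since the inner product has already been shown to be constant, an alternative route to the polarization-free statement on norms would be to observe that $\|\alpha\|^2\cdot(\omega^n/n!)=\alpha\wedge\ast\alpha$ is a top-degree element of $\mathcal{H}^{n,n}$, but the $\ast$-closure approach gives both parts of the proposition at once.

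I do not anticipate a substantive obstacle: the proof is essentially a one-line computation once the three structural properties (closure under $\wedge$, closure under $\ast$, and $\del\dbar\equiv 0$) are unpacked from the definition of ABC-geometric formality. The only point requiring mild care is to verify that the equivalent characterization listed in the definition really does guarantee $\ast$-stability of $\mathcal{H}^{\bullet,\bullet}$ (and not merely of each summand), but this is part of the stated equivalence. The result is then a direct analogue of Kotschick's \cite[Lemma 4]{Kot01} in the ABC-harmonic setting, following the same template as the preceding Proposition \ref{prop:dolb-const-norm}.
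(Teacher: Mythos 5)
Your proposal is correct and follows essentially the same route as the paper: wedge a harmonic form against the Hodge star of another, land in $\mathcal{H}^{n,n}$, and use the $\del\dbar$-vanishing to conclude that the resulting function is pluriharmonic, hence constant by compactness. The only cosmetic difference is that you treat the inner product $g(\alpha,\beta)$ of two distinct forms directly (so the polarization step the paper leaves implicit is not needed), and you phrase the vanishing via $\del\dbar\equiv 0$ on $\mathcal{H}^{0,0}$ after applying $\ast$, whereas the paper applies $\del\dbar\ast$ to the top-degree form; these are trivially equivalent.
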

\begin{rmk}
In particular, ABC-geometric formality implies geometric Bott-Chern formality with Bott-Chern harmonic forms of constant pointwise norms and inner products.
\end{rmk}
On a geometrically Aeppli formal manifold, the harmonic representatives of Dolbeault, Bott-Chern, and Aeppli cohomologies coincide and they induce a bigrading of the space of complex-valued de Rham harmonic forms. We obtain easily the following.
\begin{prop}\label{prop:aep-bot-const-norm}
On a geometrically Aeppli formal manifold $(M,J,g)$, the inner product of any two bigraded harmonic forms is constant and, in particular, every bigraded harmonic form has constant pointwise norm.
\end{prop}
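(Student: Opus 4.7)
The plan is to mimic the strategy used in Propositions \ref{prop:dolb-const-norm} and \ref{prop:abc-form-const}, exploiting the triple identification of harmonic spaces provided by Lemma \ref{lemma:geom-aeppli-form-def}. First, I would invoke that lemma to fix the common space $\mathcal{H}^{\bullet,\bullet}(M) = \mathcal{H}_{\dbar}^{\bullet,\bullet}(M) = \mathcal{H}_{BC}^{\bullet,\bullet}(M) = \mathcal{H}_A^{\bullet,\bullet}(M)$ of bigraded harmonic forms and to record that it is closed under $\wedge$.

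Next, for any bigraded harmonic form $\alpha \in \mathcal{H}^{p,q}(M)$, I would use the isomorphism $\ast \colon \mathcal{H}_{BC}^{p,q}(M) \to \mathcal{H}_A^{n-p,n-q}(M)$ together with the coincidence $\mathcal{H}_{BC} = \mathcal{H}_A$ to conclude $\ast\alpha \in \mathcal{H}^{n-p,n-q}(M)$. By closure under wedge products,
\[
\alpha \wedge \ast\alpha = |\alpha|^2 \, \frac{\omega^n}{n!} \in \mathcal{H}^{n,n}(M),
\]
so this top-degree form is, in particular, Aeppli harmonic. Since $\ast(\omega^n/n!)$ is the constant function $1$, applying $\ast$ and using the Aeppli harmonicity condition $\del\dbar\ast(\alpha\wedge\ast\alpha)=0$ yields $\del\dbar\, |\alpha|^2 = 0$. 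Thus $|\alpha|^2$ is a pluriharmonic function on a compact manifold, hence constant.

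The statement about inner products then follows by the standard polarization identity $2\,g(\alpha,\beta) = |\alpha+\beta|^2 - |\alpha|^2 - |\beta|^2$ applied to bigraded harmonic forms $\alpha,\beta \in \mathcal{H}^{p,q}(M)$, using that the algebra structure ensures the sum $\alpha+\beta$ remains in $\mathcal{H}^{p,q}(M)$. There is no serious obstacle here: the only point requiring some care is checking that $\ast\alpha$ lies in the common harmonic space, but this is immediate from the $\ast$-duality between $\mathcal{H}_{BC}$ and $\mathcal{H}_A$ combined with their equality under geometric Aeppli formality.
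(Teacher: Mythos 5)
Your argument is correct and matches the paper's approach: the paper states this proposition without proof as an easy consequence of Lemma \ref{lemma:geom-aeppli-form-def} and the arguments of Propositions \ref{prop:dolb-const-norm} and \ref{prop:abc-form-const}, which is exactly what you carry out. (One cosmetic remark: since $\alpha\wedge\ast\alpha$ is Aeppli harmonic, the defining conditions give $\del\ast(\alpha\wedge\ast\alpha)=\dbar\ast(\alpha\wedge\ast\alpha)=0$, i.e.\ $\del|\alpha|^2=\dbar|\alpha|^2=0$ directly, so $|\alpha|^2$ is constant without passing through pluriharmonicity and the maximum principle; also, for the $\C$-antilinear pairing one should use the complex polarization identity rather than the real one.)
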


As a consequence of Proposition \ref{prop:dolb-const-norm}, Proposition \ref{prop:abc-form-const}, and Proposition \ref{prop:aep-bot-const-norm}, on a geometrically Dolbeault formal (respectively, ABC-geometrically formal, respectively, geometrically Aeppli formal) manifold,  linearly independent Dolbeault and conjugate-Dolbeault (respectively, Bott-Chern and Aeppli, respectively, bigraded) harmonic forms that are linearly independent at a point in $M$, are linearly independent on $M$. Hence, we can orthonormalize the spaces $\calh_{\dbar}^{\bullet,\bullet}(M)$ and $\calh_{\del}^{\bullet,\bullet}(M)$ (respectively $\mathcal{H}^{\bullet,\bullet}_{BC}(M)$ and $\mathcal{H}_{A}^{\bullet,\bullet}(M)$, respectively, $\calh^{\bullet,\bullet}(M)$) via a Gram-Schmidt procedure  with constant coefficients. The following statement is analogous to \cite[Lemma 5]{Kot01} and the proof is identical.
\begin{lemma}\label{lemma:2}
Let $(M,J,g)$ be a geometrically Dolbeault formal (respectively, ABC-geometrically  formal, respectively, geometrically Aeppli formal) manifold. Fix an orthogonal basis of either $\calh_{\dbar}^{p,q}(M)$ or $\mathcal{H}_{\del}^{p,q}(M)$ (respectively, either $\mathcal{H}_{BC}^{p,q}(M)$ or $\mathcal{H}_A^{p,q}(M)$, respectively, of $\calh^{p,q}(M)$). Then a form $\alpha\in\mathcal{A}^{p,q}(M)$ is harmonic in the respective sense if and only if $\alpha$ can be expressed as linear combination with constant coefficients of the respective orthogonal basis.
\end{lemma}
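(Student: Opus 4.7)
The plan is to prove the biconditional by dealing with each direction separately, following the template of \cite[Lemma 5]{Kot01} as the authors indicate. Since the three cases (geometrically Dolbeault formal, ABC-geometrically formal, geometrically Aeppli formal) are completely analogous, I will carry out only the geometrically Dolbeault case, with $\{\omega_i\}$ the fixed orthogonal basis of $\calh_{\dbar}^{p,q}(M,g)$; the ABC and Aeppli cases will follow verbatim upon invoking Proposition \ref{prop:abc-form-const} or Proposition \ref{prop:aep-bot-const-norm} in place of Proposition \ref{prop:dolb-const-norm}, and the $\calh_{\del}^{p,q}$ statement will follow by complex conjugation using $\overline{\calh_{\dbar}^{p,q}}=\calh_{\del}^{q,p}$.

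The \emph{if} direction I would dispatch in a single line from $\mathbb{C}$-linearity of $\Delta_{\dbar}$: any constant-coefficient combination $\alpha=\sum_i c_i\omega_i$ of elements of $\calh_{\dbar}^{p,q}(M,g)$ satisfies $\Delta_{\dbar}\alpha = \sum_i c_i \Delta_{\dbar}\omega_i = 0$, hence $\alpha\in\calh_{\dbar}^{p,q}(M,g)$.

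For the \emph{only if} direction, I would take $\alpha\in\calh_{\dbar}^{p,q}(M,g)$ and appeal to Hodge theory (the decomposition \eqref{eq:decomp_hodge:_dolb}, which grants finite-dimensionality of $\calh_{\dbar}^{p,q}(M,g)$) to write $\alpha=\sum_i c_i\omega_i$ for uniquely determined $c_i\in\mathbb{C}$. The role of the geometric Dolbeault formality hypothesis here is to ensure that these coefficients are genuinely constants rather than merely smooth functions: by Proposition \ref{prop:dolb-const-norm}, all the pointwise inner products $\langle\omega_i,\omega_j\rangle$ are constant on $M$, so the orthogonality of the basis gives a constant diagonal Gram matrix, and likewise every $\langle\alpha,\omega_i\rangle$ is a constant function on $M$. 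Hence one reads off the explicit formula $c_i=\langle\alpha,\omega_i\rangle/\|\omega_i\|^2\in\mathbb{C}$.

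I do not anticipate a substantive obstacle: the lemma is essentially a bookkeeping record of the fact that the pointwise Gram-Schmidt procedure mentioned in the paragraph preceding the statement really does produce bases whose coefficients in expansions of harmonic forms remain scalar. The only non-formal ingredient is the constant-pointwise-norm property supplied by the three propositions cited above, which in turn encode the corresponding geometric formality hypothesis.
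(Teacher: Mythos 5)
Your proof is correct and follows the same route the paper intends (the paper simply defers to \cite[Lemma 5]{Kot01}, whose argument is exactly this): the ``if'' direction is linearity of the relevant Laplacian, and the ``only if'' direction combines finite-dimensionality of the harmonic space with the constancy of pointwise inner products from Propositions \ref{prop:dolb-const-norm}, \ref{prop:abc-form-const}, and \ref{prop:aep-bot-const-norm}, which identifies the scalar expansion coefficients with the pointwise ones $\langle\alpha,\omega_i\rangle/\|\omega_i\|^2$. Your closing observation correctly pinpoints that the formality hypothesis is what upgrades the tautological vector-space statement to the pointwise one actually used later (e.g.\ in Theorem \ref{thm:geom-dolb-top}).
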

In the next theorems, we prove that the existence of geometrically Dolbeault formal, ABC-geometrically formal, and geometrically Aeppli formal metrics is obstructed by the complex cohomologies of the manifold. Surprisingly, obstructions for geometrically Dolbeault formal, ABC-geometrically formal, and geometrically Aepplic formal metrics arise also from the topology of the underlying manifold, as for geometrically formal Riemannian manifolds.
\begin{thm}\label{thm:geom-dolb-top}
Let $(M,J,g)$ be a geometrically Dolbeault formal manifold of complex dimension $n$. Then,
$$
  h_{\dbar}^{p,0}(M)\cdot h_{\dbar}^{0,q}(M)\leq h_{\dbar}^{p,q}(M)\leq h^{p,q}(\mathbb{T}^{n}_{\C}).
$$
Moreover,
$$
b_k(M)\leq b_k(\mathbb{T}^{2n}).
$$

\begin{proof}
Since $g$ is geometrically Dolbeault formal, the map
\begin{equation*}
    \mu\colon \mathcal{H}_{\dbar}^{p,0}(M)\times \mathcal{H}_{\dbar}^{0,q}(M)\rightarrow \mathcal{H}_{\dbar}^{p,q}(M)
\end{equation*}
induced by the wedge product of forms is well defined and injective. Indeed, if $\alpha\in\mathcal{H}_{\dbar}^{p,0}$ and $\beta\in\mathcal{H}_{\dbar}^{0,q}(M)$, by Proposition \ref{prop:dolb-const-norm}, if $\alpha\neq 0$ and $\beta\neq 0$, then they are everywhere non-zero. It is fairly easy to see that for a fixed $x\in M$, it holds that $(\alpha\wedge\beta)_x\neq 0$. Hence, by Proposition \ref{prop:dolb-const-norm}, we have  $\alpha\wedge \beta\neq 0$. As a result
\[
 h_{\dbar}^{p,0}(M)\cdot h_{\dbar}^{0,q}(M)\leq h_{\dbar}^{p,q}(M). 
\]
By Proposition \ref{prop:dolb-const-norm} and Lemma \ref{lemma:2}, the dimensions of $\mathcal{H}_{\dbar}^{p,q}(M)$ can not exceed $\text{rk}\bigwedge^{p,q}$, i.e.,
$$
h_{\dbar}^{p,q}(M)\leq \textstyle h^{p,q}(\mathbb{T}^n_{\C})=\text{rk}\bigwedge^{p,q}(M).
$$
Finally, note that by the classical Fr\"olicher inequality \cite{Fro55} and the first part of the theorem, it holds that
\[
b_k(M)\leq \sum_{p+q=k} h_{\dbar}^{p,q}(M,J)\leq \sum_{p+q=k} h^{p,q}(\mathbb{T}^{n}_{\C})=b_k(\mathbb{T}^{2n}).
\]
\end{proof}
\end{thm}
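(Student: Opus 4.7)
The plan is to establish the three inequalities in sequence, leveraging two principal tools already built up in the paper: Proposition \ref{prop:dolb-const-norm}, which asserts that every Dolbeault harmonic form has constant pointwise norm, and Lemma \ref{lemma:2}, which says that any Dolbeault harmonic $(p,q)$-form is a constant-coefficient linear combination of a fixed orthogonal basis of $\mathcal{H}_{\dbar}^{p,q}(M,g)$. Combined, these yield a single observation that will be used repeatedly: for any fixed $x\in M$, the evaluation map $\mathcal{H}_{\dbar}^{p,q}(M)\to \bigwedge^{p,q}T^*_xM$ is injective, since a nonzero harmonic form has constant nonzero pointwise norm and therefore cannot vanish at $x$.

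For the left-hand inequality, I would introduce the wedge product map
\[
\mu\colon \mathcal{H}_{\dbar}^{p,0}(M)\otimes_{\C}\mathcal{H}_{\dbar}^{0,q}(M)\longrightarrow \mathcal{H}_{\dbar}^{p,q}(M),\qquad \alpha\otimes\beta\longmapsto \alpha\wedge\beta,
\]
which is well defined by the geometric Dolbeault formality assumption. To prove $\mu$ is injective, I would choose bases $\{\alpha_i\}$ and $\{\beta_j\}$ of the two source spaces; by the evaluation observation above, $\{\alpha_i(x)\}$ is linearly independent in $\bigwedge^{p,0}T^*_xM$ and $\{\beta_j(x)\}$ in $\bigwedge^{0,q}T^*_xM$ at every $x\in M$. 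The natural wedge map $\bigwedge^{p,0}T^*_xM\otimes\bigwedge^{0,q}T^*_xM\to\bigwedge^{p,q}T^*_xM$ is an isomorphism of complex vector spaces (both sides have dimension $\binom{n}{p}\binom{n}{q}$), so the products $\{\alpha_i(x)\wedge\beta_j(x)\}$ are linearly independent in $\bigwedge^{p,q}T^*_xM$, hence the $\{\alpha_i\wedge\beta_j\}$ are linearly independent in $\mathcal{H}_{\dbar}^{p,q}(M)$. This gives $h_{\dbar}^{p,0}(M)\cdot h_{\dbar}^{0,q}(M)\leq h_{\dbar}^{p,q}(M)$.

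The middle inequality follows directly from the pointwise-evaluation injectivity: $h_{\dbar}^{p,q}(M)\leq \dim_{\C}\bigwedge^{p,q}T^*_xM=\binom{n}{p}\binom{n}{q}=h^{p,q}(\mathbb{T}^n_{\C})$. The Betti number bound is then obtained by combining with the classical Fr\"olicher inequality \cite{Fro55}:
\[
b_k(M)\leq \sum_{p+q=k}h_{\dbar}^{p,q}(M)\leq \sum_{p+q=k}h^{p,q}(\mathbb{T}^n_{\C})=b_k(\mathbb{T}^{2n}),
\]
where the last equality is the Vandermonde identity $\sum_{p+q=k}\binom{n}{p}\binom{n}{q}=\binom{2n}{k}$.

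The main technical subtlety I expect is in the first inequality, where one must justify injectivity of $\mu$ on the full tensor product rather than only on pure tensors. The rigidity that makes this work---namely that constant-coefficient expansions of pointwise linearly independent harmonic forms remain linearly independent globally---depends crucially on Lemma \ref{lemma:2}, without which one could only conclude that a single product of nonzero harmonic forms is nonzero. The remaining steps are essentially linear algebra (dimension counts in $\bigwedge^{p,q}T^*_xM$) and the standard Fr\"olicher spectral sequence estimate.
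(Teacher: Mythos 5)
Your proposal is correct and follows essentially the same route as the paper: constant pointwise norm of Dolbeault harmonic forms plus Lemma \ref{lemma:2} give injectivity of the evaluation map $\mathcal{H}_{\dbar}^{p,q}(M)\to\bigwedge^{p,q}T^*_xM$, which yields both the upper bound and (via the pointwise wedge isomorphism $\bigwedge^{p,0}\otimes\bigwedge^{0,q}\cong\bigwedge^{p,q}$) the lower bound, and the Betti number estimate is the Fr\"olicher inequality in both cases. If anything, your treatment of the first inequality is more complete than the paper's, which only verifies that the wedge of two nonzero harmonic forms is nonzero, whereas you correctly establish linear independence of the full family $\{\alpha_i\wedge\beta_j\}$, which is what the product bound $h_{\dbar}^{p,0}\cdot h_{\dbar}^{0,q}\leq h_{\dbar}^{p,q}$ actually requires.
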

\begin{thm}\label{thm-abc-geom-top}
Let $(M,J,g)$ be a ABC-geometrically formal manifold of complex dimension $n$. Then,
\begin{align*}
     h_{A}^{p,q}(M)\leq h^{p,q}(\mathbb{T}^n_{\C}),\qquad 
    &h_{BC}^{p,0}(M)\cdot h_{BC}^{0,q}(M)\leq h_{BC}^{p,q}(M)\leq h^{p,q}(\mathbb{T}^n_{\C}).
\end{align*}
Moreover
\begin{align*}
        &b_k(M)\leq b_k(\mathbb{T}^{2n}).
\end{align*}
\begin{proof}
The upper bounds on $h_A^{p,q}(M)$ and $h_{BC}^{p,q}(M)$, as in Theorem \ref{thm:geom-dolb-top}, are a consequence of Lemma \ref{lemma:2}. The lower bound on $h_{BC}^{p,q}(M)$ follows from the injectivity of the wedge product of Bott-Chern harmonic forms $(p,0)$ and $(q,0)$-forms, as in Theorem \ref{thm:geom-dolb-top}, since ABC-geometric formality implies geometrically Bott-Chern formality. Moreover, by the inequality à la Fr\"olicher in \cite{AT15},
\begin{align*}
2b_k(M)&\leq\sum_{p+q=k}(h_{BC}^{p,q}(M)+h_{A}^{p,q}(M))\leq \sum_{p+q=k}(h^{p,q}(\mathbb{T}^n_{\C})+h^{n-p,n-q}(\mathbb{T}^n_{\C}))\\
&=2\sum_{p+q=k}h^{p,q}(\mathbb{T}^n_{\C})=2b_k(\mathbb{T}^{2n}).
\end{align*}
\end{proof}
\end{thm}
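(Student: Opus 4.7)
The plan is to emulate the proof of Theorem \ref{thm:geom-dolb-top}, rephrasing each step in the Bott-Chern and Aeppli setting. First, for the upper bounds $h_{A}^{p,q}(M)\leq h^{p,q}(\mathbb{T}^n_{\C})$ and $h_{BC}^{p,q}(M)\leq h^{p,q}(\mathbb{T}^n_{\C})$, I would invoke Proposition \ref{prop:abc-form-const} to guarantee constant pointwise norm for Bott-Chern and Aeppli harmonic forms. Combined with Lemma \ref{lemma:2}, a fixed orthonormal basis (produced by Gram--Schmidt with constant coefficients) is pointwise linearly independent at every $x\in M$, so its cardinality cannot exceed $\operatorname{rk}\bigwedge^{p,q}T_x^*M = h^{p,q}(\mathbb{T}^n_{\C})$.

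For the lower bound $h_{BC}^{p,0}(M)\cdot h_{BC}^{0,q}(M)\leq h_{BC}^{p,q}(M)$, I would rely on the fact that ABC-geometric formality implies geometric Bott-Chern formality (item (vi) following Diagram \ref{diag:metrics}), so that the wedge product defines a bilinear map $\mu\colon \mathcal{H}_{BC}^{p,0}(M)\times \mathcal{H}_{BC}^{0,q}(M)\to \mathcal{H}_{BC}^{p,q}(M)$. Fixing orthogonal bases $\{e_i\}$ of $\mathcal{H}_{BC}^{p,0}(M)$ and $\{f_j\}$ of $\mathcal{H}_{BC}^{0,q}(M)$, the constant pointwise norm property forces the $e_i(x)$ to be linearly independent in $\bigwedge^{p,0}T_x^*M$ and the $f_j(x)$ in $\bigwedge^{0,q}T_x^*M$ for every $x\in M$. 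Since the pointwise wedge map $\bigwedge^{p,0}T_x^*M\otimes \bigwedge^{0,q}T_x^*M\to \bigwedge^{p,q}T_x^*M$ is an isomorphism of vector spaces, the products $e_i(x)\wedge f_j(x)$ are pointwise, hence globally, linearly independent in $\mathcal{H}_{BC}^{p,q}(M)$, giving the multiplicative bound.

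The remaining inequality $b_k(M)\leq b_k(\mathbb{T}^{2n})$ would follow from the Fr\"olicher-type inequality of \cite{AT15},
\[
2\, b_k(M)\leq \sum_{p+q=k}\bigl(h_{BC}^{p,q}(M)+h_A^{p,q}(M)\bigr),
\]
together with the upper bounds just established and the torus symmetry $h^{p,q}(\mathbb{T}^n_{\C})=h^{n-p,n-q}(\mathbb{T}^n_{\C})=\binom{n}{p}\binom{n}{q}$, so that $\sum_{p+q=k}\bigl(h^{p,q}(\mathbb{T}^n_\C)+h^{n-p,n-q}(\mathbb{T}^n_\C)\bigr)=2\,b_k(\mathbb{T}^{2n})$.

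Every step is a direct translation of the Dolbeault argument, so I do not anticipate any serious obstacle. The most delicate ingredient is the passage from global to pointwise linear independence in the injectivity argument, which is ensured by Proposition \ref{prop:abc-form-const} together with Lemma \ref{lemma:2}; beyond this, the estimates are formal consequences of the Hodge decompositions \eqref{eq:decomp_hodge:_bott}--\eqref{eq:decomp_hodge:aepl} and the defining property of ABC-geometric formality.
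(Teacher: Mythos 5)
Your proposal is correct and follows essentially the same route as the paper: upper bounds from the constant-pointwise-norm property (Proposition \ref{prop:abc-form-const}) together with Lemma \ref{lemma:2}, the lower bound from the injectivity of the wedge product of Bott-Chern harmonic $(p,0)$- and $(0,q)$-forms, and the Betti number bound from the Fr\"olicher-type inequality of \cite{AT15}. If anything, your pointwise argument for the linear independence of the products $e_i\wedge f_j$ is spelled out slightly more carefully than in the paper, which only records the nonvanishing of individual products.
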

\begin{defi}
We will call \emph{geometrically Bott-Chern (CN) formal metric} a geometrically Bott-Chern formal metric with Bott-Chern harmonic forms of constant pointwise norm. 
\end{defi}
It is immediate to check that also the Aeppli harmonic forms with respect to a geometrically Bott-Chern (CN) formal metric have constant pointwise norm.

\begin{rmk}
On all the known examples of compact complex solvmanifold surfaces \cite{AT15}, nilmanifolds \cite{ST24}, and Calabi-Eckmann manifolds (see Section \ref{sec:CE}), the notion of geometrically Bott-Chern (CN) formal coincides with geometrically Bott-Chern formality and ABC-geometric formality. It would be of great interest to investigate further on the links between these notions.
\end{rmk}
We note that the proof of Theorem \ref{thm-abc-geom-top} relies only on the fact that a ABC-geometrically formal manifold is geometrically Bott-Chern (CN) formal.  This yields immediately the following.
\begin{cor}\label{cor:geom-BC-form-const-norm}
Let $(M,J,g)$ be a geometrically Bott-Chern (CN) formal manifold of complex dimension $n$. Then,
\[
h_A^{p,q}(M)\leq h^{p,q}(\mathbb{T}^n_{\C}), \quad h_{BC}^{p,0}(M)\cdot h_{BC}^{0,q}(M)\leq h_{BC}^{p,q}(M)\leq h^{p,q}(\mathbb{T}_{\C}^n).
\]
Moreover
\[
b_k(M)\leq b_k(\mathbb{T}^{2n}).
\]
\end{cor}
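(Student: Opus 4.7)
The plan is to verify that the proof of Theorem \ref{thm-abc-geom-top} carries over verbatim under the weaker assumption of geometric Bott-Chern (CN) formality. That argument invoked ABC-geometric formality only through two ingredients: (a) the wedge product of Bott-Chern harmonic forms is Bott-Chern harmonic (i.e., geometric Bott-Chern formality), and (b) both Bott-Chern and Aeppli harmonic forms have constant pointwise norm. Property (a) is built into the definition of geometric Bott-Chern formality, while (b) is the (CN) hypothesis for Bott-Chern forms and, via the $\ast$-isometry $\calh_{BC}^{p,q}(M,g)\cong\calh_A^{n-p,n-q}(M,g)$, also for Aeppli forms, as noted in the remark preceding the corollary.

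For the upper bounds $h_{BC}^{p,q}(M),h_A^{p,q}(M)\leq h^{p,q}(\mathbb{T}^n_{\C})$, I would redeploy Lemma \ref{lemma:2}: its proof is a Gram-Schmidt orthonormalization with constant coefficients, which only requires constant pointwise norm (polarization on the linear spaces $\calh_{BC}^{p,q}(M,g)$ and $\calh_A^{p,q}(M,g)$ upgrades constant norms to constant pointwise inner products). Evaluating an orthonormal basis at any $x\in M$ yields linearly independent elements of $\bigwedge^{p,q}T^*_x M$, giving the bound $\binom{n}{p}\binom{n}{q}=h^{p,q}(\mathbb{T}^n_{\C})$. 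For the lower bound $h_{BC}^{p,0}(M)\cdot h_{BC}^{0,q}(M)\leq h_{BC}^{p,q}(M)$, fix bases $\{\alpha_i\}$ of $\calh_{BC}^{p,0}(M,g)$ and $\{\beta_j\}$ of $\calh_{BC}^{0,q}(M,g)$. Constant pointwise norm on the Bott-Chern harmonic space forces every nontrivial linear combination of the $\alpha_i$'s (respectively $\beta_j$'s) to be nowhere vanishing, so the bases are pointwise linearly independent. Since the pointwise multiplication $\bigwedge^{p,0}T^*_x M\otimes\bigwedge^{0,q}T^*_x M\to\bigwedge^{p,q}T^*_x M$ is an isomorphism, the forms $\alpha_i\wedge\beta_j$ are pointwise, hence globally, linearly independent. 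Geometric Bott-Chern formality places each $\alpha_i\wedge\beta_j$ inside $\calh_{BC}^{p,q}(M,g)$, yielding $h_{BC}^{p,0}(M)\cdot h_{BC}^{0,q}(M)$ independent Bott-Chern harmonic $(p,q)$-forms.

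For the Betti bound I would then apply the Fr\"olicher-type inequality from \cite{AT15},
\[
2b_k(M)\leq\sum_{p+q=k}\bigl(h_{BC}^{p,q}(M)+h_A^{p,q}(M)\bigr),
\]
combine it with the upper bounds just established, and simplify via the Chu-Vandermonde identity $\sum_{p+q=k}\binom{n}{p}\binom{n}{q}=\binom{2n}{k}=b_k(\mathbb{T}^{2n})$, concluding $b_k(M)\leq b_k(\mathbb{T}^{2n})$. There is no substantial obstacle beyond the bookkeeping just described: the entire content is the observation that Theorem \ref{thm-abc-geom-top}'s argument uses no strictly ABC-formal ingredient, only geometric Bott-Chern formality together with constant pointwise norms on both Bott-Chern and Aeppli harmonic spaces, all of which are supplied directly by the (CN) hypothesis.
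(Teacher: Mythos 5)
Your proposal is correct and follows exactly the paper's route: the paper proves this corollary precisely by observing that the proof of Theorem \ref{thm-abc-geom-top} uses ABC-geometric formality only through geometric Bott-Chern formality plus constant pointwise norms of Bott-Chern (hence, via $\ast$, Aeppli) harmonic forms, which is the (CN) hypothesis. Your spelled-out details (polarization, pointwise linear independence, the Fr\"olicher-type inequality from \cite{AT15}) match the arguments of Theorems \ref{thm:geom-dolb-top} and \ref{thm-abc-geom-top} that the paper implicitly reuses.
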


For geometrically Aeppli formal manifolds, we obtain the analogous bounds for any bigraded harmonic form.
\begin{thm}\label{thm:geom-aeppl-top}
Let $(M,J)$ be a compact complex manifold of complex dimension $n$ admitting a geometrically Aeppli formal metric $g$. Then, 
\begin{enumerate}
\item  $ h^{p,0}(M)\cdot h^{0,q}(M)\leq  h^{p,q}(M)\leq h^{p,q}(\mathbb{T}^n_{\C})$.
\item $b_k(M)\leq b_k(\mathbb{T}^{2n})$.
\end{enumerate}
\end{thm}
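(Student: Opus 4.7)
The plan is to mirror the proofs of Theorem \ref{thm:geom-dolb-top} and Theorem \ref{thm-abc-geom-top}, simplified by the fact that a geometrically Aeppli formal metric makes all three Hermitian harmonic spaces coincide, so the Frölicher inequalities are not even needed. By Lemma \ref{lemma:geom-aeppli-form-def}, we have $\mathcal{H}^{p,q}(M)=\mathcal{H}_{\dbar}^{p,q}(M)=\mathcal{H}_{BC}^{p,q}(M)=\mathcal{H}_A^{p,q}(M)$ and $\mathcal{H}_{dR,\C}^k(M)=\bigoplus_{p+q=k}\mathcal{H}^{p,q}(M)$, and $\mathcal{H}^{\bullet,\bullet}(M)$ is closed under the wedge product. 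By Proposition \ref{prop:aep-bot-const-norm}, every bigraded harmonic form has constant pointwise norm.

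For the lower bound in (1), I would consider the map
\[
\mu\colon \mathcal{H}^{p,0}(M)\times \mathcal{H}^{0,q}(M)\longrightarrow \mathcal{H}^{p,q}(M),\qquad (\alpha,\beta)\mapsto \alpha\wedge\beta,
\]
which is well defined because $\mathcal{H}^{\bullet,\bullet}(M)$ is an algebra. If $0\neq\alpha\in\mathcal{H}^{p,0}(M)$ and $0\neq\beta\in\mathcal{H}^{0,q}(M)$, then by Proposition \ref{prop:aep-bot-const-norm} both are nowhere-vanishing; at any point $x\in M$, the decomposable $(p,q)$-form $(\alpha\wedge\beta)_x$ is nonzero by the bigraded structure of $\bigwedge^{\bullet,\bullet}T^*_xM$, and hence $\alpha\wedge\beta$ has nonzero constant pointwise norm. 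This yields $h^{p,0}(M)\cdot h^{0,q}(M)\leq h^{p,q}(M)$.

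For the upper bound in (1), by Lemma \ref{lemma:2} any $\alpha\in\mathcal{H}^{p,q}(M)$ is a constant-coefficient linear combination of the elements of a fixed orthogonal basis, so the evaluation map $\mathcal{H}^{p,q}(M)\hookrightarrow \bigwedge^{p,q}T^*_xM$ at any $x\in M$ is injective; this gives $h^{p,q}(M)\leq \mathrm{rk}\bigwedge^{p,q}=h^{p,q}(\mathbb{T}^n_{\C})$.

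Part (2) is then immediate: the bigraded decomposition of $\mathcal{H}_{dR,\C}^k(M)$ gives $b_k(M)=\sum_{p+q=k}h^{p,q}(M)$, and combining this with (1) yields
\[
b_k(M)=\sum_{p+q=k}h^{p,q}(M)\leq \sum_{p+q=k}h^{p,q}(\mathbb{T}^n_{\C})=b_k(\mathbb{T}^{2n}).
\]
There is no real obstacle here beyond recording these identifications; the only point requiring care is to invoke the full strength of Lemma \ref{lemma:geom-aeppli-form-def} so that the bigraded decomposition of de Rham harmonic forms sidesteps any Frölicher-type estimate.
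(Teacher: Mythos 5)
Your proof is correct and matches the argument the paper intends: the paper omits the proof of Theorem \ref{thm:geom-aeppl-top}, remarking only that the bounds are ``analogous'' to Theorems \ref{thm:geom-dolb-top} and \ref{thm-abc-geom-top}, and you reproduce exactly that scheme (constant pointwise norms give injectivity of the pointwise evaluation map for the upper bound, and nonvanishing of wedge products of nowhere-zero $(p,0)$- and $(0,q)$-forms for the lower bound). The only genuine difference is in part (2), where you replace the Fr\"olicher-type inequalities used in the earlier theorems by the bigraded decomposition $\mathcal{H}_{dR}^{k}(M;\C)=\bigoplus_{p+q=k}\mathcal{H}^{p,q}(M)$ from Lemma \ref{lemma:geom-aeppli-form-def}; this is legitimate and slightly cleaner, and consistent with the fact that the $\del\dbar$-lemma already forces $b_k=\sum_{p+q=k}h^{p,q}$.
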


From \cite[Theorem 6]{Kot01}, Theorem \ref{thm:geom-dolb-top}, and Corollary \ref{cor:geom-BC-form-const-norm}, we immediately have the following.
\begin{thm}\label{thm:blow-up-torus}
Let $M$ be a complex torus and let  $\pi\colon \tilde{M}\rightarrow M$ be the blow-up of $M$ along any compact complex submanifold. Then, $\tilde{M}$ is not geometrically formal, nor geometrically Dolbeault formal, nor geometrically Bott-Chern formal (CN).
\begin{proof}
Let $k:=\operatorname{codim}_\C Y$ and let $E:=\pi^{-1}(Y)\cong \mathbb{P}(\mathcal{N}_{Y/M})$. From the formulas for the cohomology of a blow-up, see e.g., \cite[p.605]{GriHar},
\[
H^\bullet(\tilde{M};\C)\cong H^\bullet(M;\C)\oplus H^\bullet(E)/\pi^*H^{\bullet}(Y),
\]
it is easy to see that $b_{2k-2}(\tilde{M})>b_{2k-2}(M)$. We can conclude then by \cite[Theorem 6]{Kot01}, Theorem \ref{thm:geom-dolb-top}, and Corollart \ref{cor:geom-BC-form-const-norm}.
\end{proof}
\end{thm}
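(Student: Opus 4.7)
The strategy is exactly the one suggested by the topological obstructions already proven: we exhibit a single Betti number of $\tilde{M}$ that strictly exceeds the corresponding Betti number of a real torus of matching dimension, and then invoke the three independent obstructions (Kotschick's theorem, Theorem \ref{thm:geom-dolb-top}, and Corollary \ref{cor:geom-BC-form-const-norm}) to rule out each type of geometric formality simultaneously.

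First, set $n := \dim_{\C} M$ and let $Y \subseteq M$ be the center of the blow-up, which we may assume connected (otherwise we argue on each component separately). Put $k := \operatorname{codim}_{\C} Y \geq 1$, so that the exceptional divisor $E = \pi^{-1}(Y) \cong \mathbb{P}(\mathcal{N}_{Y/M})$ is a $\mathbb{P}^{k-1}$-bundle over $Y$. The classical additive blow-up formula \cite{GriHar} gives, degree by degree,
\[
H^{j}(\tilde{M};\C) \;\cong\; H^{j}(M;\C) \;\oplus\; \bigoplus_{i=1}^{k-1} H^{j-2i}(Y;\C).
\]

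Next, I would specialize to the degree $j = 2k-2$. The summand indexed by $i = k-1$ contributes $H^{0}(Y;\C) \cong \C$, so that
\[
b_{2k-2}(\tilde{M}) \;=\; b_{2k-2}(M) + \sum_{i=1}^{k-1} b_{2k-2-2i}(Y) \;\geq\; b_{2k-2}(M) + 1.
\]
Since $M$ is diffeomorphic to $\mathbb{T}^{2n}$, we have $b_{2k-2}(M) = b_{2k-2}(\mathbb{T}^{2n})$, and so $b_{2k-2}(\tilde{M}) > b_{2k-2}(\mathbb{T}^{2n})$. Note that $\tilde{M}$ and $M$ have the same complex dimension $n$, so the comparison is with the \emph{same} reference torus that appears in all three obstructions.

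Finally, I would conclude by contradiction. If $\tilde{M}$ were geometrically formal, Kotschick's upper bound \cite[Theorem 6]{Kot01} would force $b_{2k-2}(\tilde{M}) \leq b_{2k-2}(\mathbb{T}^{2n})$; if geometrically Dolbeault formal, Theorem \ref{thm:geom-dolb-top} would give the same conclusion; and if geometrically Bott-Chern (CN) formal, Corollary \ref{cor:geom-BC-form-const-norm} would apply. Each case contradicts the strict inequality just established, so none of the three formalities can hold. No step looks particularly delicate here: the only minor care is in handling the extreme case $k = n$ (when $Y$ is a point), but even then the contribution $b_{0}(Y) = 1$ from $i = k-1$ is exactly what drives the strict inequality, so the argument is uniform in $k$.
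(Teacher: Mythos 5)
Your proposal is correct and follows exactly the same route as the paper: the same blow-up cohomology formula, the same choice of degree $2k-2$ (where the $i=k-1$ summand contributes $H^0(Y;\C)$), and the same three obstructions from \cite[Theorem 6]{Kot01}, Theorem \ref{thm:geom-dolb-top}, and Corollary \ref{cor:geom-BC-form-const-norm}; you merely make explicit the computation the paper labels ``easy to see.'' The only caveat is that the argument (and the theorem itself) implicitly requires $k\geq 2$, since for $k=1$ the blow-up is a biholomorphism and the direct sum over $1\leq i\leq k-1$ is empty.
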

\begin{rmk}
Combining with Corollary \ref{cor:geom-BC-form-const-norm} and Lemma \ref{lemma:geom-aeppli-form-def}, the blow-up of any complex torus along any compact complex submanifold is also not ABC-geometrically formal nor geometrically Aeppli formal.
\end{rmk}

Kotschick \cite[Theorem 7]{Kot01} proved the following structure theorem for  geometrically formal manifolds.
\begin{thm}\label{thm:kot_subm}
Suppose the closed oriented manifold $M^n$ is geometrically formal. If $b_1(M)= k$,
then there is a smooth submersion $\pi\colon M \rightarrow  \mathbb{T}^k$, for which $\pi_{*}$ is an injection of cohomology algebras. In particular, if $b_1(M)= n$, then $M$ is diffeomorphic to $\mathbb{T}^n$. In
this case every formal Riemannian metric is flat.
\end{thm}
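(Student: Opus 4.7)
The plan is to follow Kotschick's approach, whose only non-trivial input is the fact \cite[Lemma 4]{Kot01} that on a geometrically formal manifold harmonic forms have constant pointwise norm and, by polarisation, constant pointwise inner products. First I would pick harmonic representatives $\alpha_{1},\ldots,\alpha_{k}$ of an integral basis of $H^{1}(M;\R)$. The constant-norm property makes these pointwise linearly independent on all of $M$: any non-trivial combination $\sum c_{i}\alpha_{i}$ is again harmonic, hence of constant norm, so it cannot vanish at a point without vanishing identically and contradicting cohomological independence.

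Next, I would construct $\pi\colon M\to\mathbb{T}^{k}$ as the Albanese-type map $x\mapsto(\int_{x_{0}}^{x}\alpha_{1},\ldots,\int_{x_{0}}^{x}\alpha_{k})\bmod\Lambda$, where $\Lambda\subset\R^{k}$ is the full-rank period lattice of the integral basis. Its differential at $x$ is $(\alpha_{1}|_{x},\ldots,\alpha_{k}|_{x})$, surjective by pointwise linear independence, so $\pi$ is a submersion. The pullback $\pi^{*}$ sends the standard generators of $H^{1}(\mathbb{T}^{k})$ to the classes $[\alpha_{i}]$, and geometric formality makes every wedge $\alpha_{i_{1}}\wedge\cdots\wedge\alpha_{i_{p}}$ harmonic; these wedges are linearly independent in $H^{\bullet}(M)$, because any cohomological relation would propagate to a pointwise linear relation at every point, violating linear independence of the partial coframe $\alpha_{1}|_{x},\ldots,\alpha_{k}|_{x}$ inside the exterior algebra of $T_{x}^{*}M$. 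Hence $\pi^{*}$ injects the exterior algebra $H^{\bullet}(\mathbb{T}^{k})$ into $H^{\bullet}(M)$ as algebras. When $k=n$ the map $\pi$ becomes a submersion between closed equidimensional manifolds, hence a finite covering of $\mathbb{T}^{n}$, and any finite cover of a torus is itself diffeomorphic to $\mathbb{T}^{n}$.

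For the flatness claim I would first Gram-Schmidt $\alpha_{1},\ldots,\alpha_{n}$ with constant coefficients (allowed by constancy of inner products) to obtain a pointwise orthonormal harmonic coframe, and let $X_{1},\ldots,X_{n}$ be its dual frame. Closedness of each $\alpha_{l}$ together with $\alpha_{l}(X_{i})=\delta_{li}$ gives $\alpha_{l}([X_{i},X_{j}])=X_{i}\alpha_{l}(X_{j})-X_{j}\alpha_{l}(X_{i})-d\alpha_{l}(X_{i},X_{j})=0$, hence $[X_{i},X_{j}]=0$. Plugging this and the constancy of $g(X_{i},X_{j})=\delta_{ij}$ into Koszul's formula yields $g(\nabla_{X_{i}}X_{j},X_{k})=0$ for every $i,j,k$, so the $X_{i}$ are globally parallel and $g$ is flat. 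The most delicate part is this last step: harmonicity of the $\alpha_{i}$ alone does not parallelise them, and one genuinely needs the full strength of geometric formality, namely that the inner products $g(X_{i},X_{j})$ are constant because the Hodge duals $*\alpha_{i}$ are also harmonic, in order to kill every term in Koszul's formula simultaneously.
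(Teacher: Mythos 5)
The paper does not prove this statement; it is quoted verbatim from Kotschick \cite[Theorem 7]{Kot01}, and your argument is a correct reconstruction of Kotschick's original proof — constant pointwise norms and inner products of harmonic forms, pointwise independence of the harmonic $1$-forms, the period/Albanese map as a submersion, injectivity of $\pi^*$ via harmonicity of wedge products, the covering argument for $k=n$, and the commuting orthonormal parallel frame for flatness. No gaps; nothing to add.
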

We now consider $M$ a geometrically Bott-Chern formal manifold of complex dimension $n$. We assume that the Bott-Chern-harmonic forms $(1,0)$-forms have constant pointwise norm. Let $\text{Alb}(M)\cong \mathbb{T}^{k}_{\C}$ be the Albanese torus of $M$. In particular,
\[
k\leq \dim_\C H_{BC}^{1,0}(M)\leq\frac{1}{2} \dim H^1_{dR}(M;\R)\leq n,
\]
where the third inequality follows from Corollary \ref{cor:geom-BC-form-const-norm}. On the other hand, the second inequality descends from the fact that the maps induced by identity
$$
H_{BC}^{1,0}(M)\rightarrow H^1_{dR}(M;\C), \quad H_{BC}^{0,1}(M)\rightarrow H_{dR}^{1}(M;\C)
$$
are injective for every compact complex manifold.

The Albanese mapping $alb\colon M\rightarrow \text{Alb}(M)$ is given by choosing a base point $p_0\in M$ and then mapping
$$
p\longmapsto\left(\omega\mapsto\int_{p_0}^p\omega\right)\in\text{Alb}(M),
$$
for every $\omega\in \calh_{BC}^{1,0}(M)\cong H_{BC}^{1,0}(M)$. By our assumptions, Bott-Chern harmonic $(1,0)$-forms have constant pointwise norm and constant inner product, implying that the map $alb$ is a submersion. Indeed, the differential of $alb$ acts as
$$
d(alb)_p(v)=\left(\omega\mapsto \omega_p( v)\right),
$$
for every $p\in M$, $v\in T_p^{1,0}(M)$, and $\omega\in \calh_{BC}^{1,0}(M)$, and $\omega_p(v)$ is the standard perfect pairing between $T_p^{\ast 1,0}(M)$ and  $T_p^{1,0}(M)$.

Then, $d(alb)_p(v)=0$ if and only if $\omega_p(v)=0$ for every $\omega\in \calh_{BC}^{1,0}(M)$, i.e., $v$ annihilates the evaluation map $eval_p\colon \omega\mapsto\omega_p$ at $p\in M$. Since $\omega_p(v)$ is a perfect pairing of $T_p^{\ast 1,0}(M)$ and  $T_p^{1,0}(M)$
$$
\dim \ker d(alb)_p= \dim T_p^{\ast 1,0}(M,J)-\dim \im (eval_p),
$$
or equivalently,
\[
\text{rank}(d(alb)_p)=\text{rank}(eval_p).
\]
However, by assumption, Bott-Chern harmonic  $(1,0)$-forms have constant norm and inner products, so that a orthogonal basis of $\mathcal{H}_{BC}^{1,0}(M)$ at a point in $M$ is a orthogonal basis of $\mathcal{H}_{BC}^{1,0}(M)$ at every point of $M$. Hence, the map $eval_p$ is injective and of rank equal to $h_{BC}^{1,0}(M)$ for every $p\in M$. This implies that
$$
\text{rank}(d(alb)_p)=\text{rank}(eval_p)=\dim \text{Alb}(M)=\dim T_{alb(p)}^{1,0}\text{Alb}(M),
$$
for every $p\in M$, hence $d(alb)_p$ is surjective for every $p\in M$. Moreover, the pullback of the Albanese map induces the injective morphism
\[
alb^*\colon H^{1,0}_{BC}(\text{Alb}(M))\xrightarrow{\cong} H^{1,0}_{BC}(M),
\]
and in particular, $alb^*\colon \calh_{BC}^{1,0}(\text{Alb}(M))\rightarrow \calh_{BC}^{1,0}(M)$ is an injective morphism. Moreover, by conjugation, also $alb^*\colon \calh_{BC}^{0,1}(\text{Alb}(M))\rightarrow \calh^{0,1}_{BC}(M,J)$ is an injective morphism.

As a result, since $g$ is geometrically Bott-Chern formal on $M$ and Bott-Chern harmonic $(1,0)$-forms have constant norms and inner products, the product of the pullback of linearly independent Bott-Chern harmonic $(1,0)$-forms and $(0,1)$-forms on $\text{Alb}(M)$ is non-zero and still harmonic, hence the map
$$
alb^*\colon \calh_{BC}^{\bullet,\bullet}(\text{Alb}(M))\rightarrow \calh_{BC}^{\bullet,\bullet}(M)
$$
is injective, since $\calh_{BC}^{\bullet,\bullet}(\text{Alb}(M))$ is generated by the spaces $\calh_{BC}^{1,0}(\text{Alb}(M))$ and $\calh_{BC}^{0,1}(\text{Alb}(M))$. In particular,
\begin{equation}\label{eq:diseq_alb}
h_{BC}^{p,q}(M)\geq h_{BC}^{p,q}(\mathbb{T}^{k}_{\C}).
\end{equation}
Since $g$ is geometrically Bott-Chern formal, by Proposition \ref{prop:1}, the spaces $\calh_{BC}^{\bullet,0}(M)$ and $\calh_{\dbar}^{\bullet,0}(M)$, respectively, $\calh_{BC}^{0,\bullet}(M)$ and $\calh_{\del}^{0,\bullet}(M)$, coincide and, by assumption, Dolbeault harmonic $(1,0)$-forms and conjugate-Dolbeault harmonic $(0,1)$-forms have constant pointwise norm. Then, the Albanese map induces injective maps
\[
alb^*\colon \calh^{\bullet,0}(\mathbb{T}^{k}_{\C})\rightarrow \calh_{\dbar}^{\bullet,0}(M)
\]
and
\[
alb^*\colon \calh^{0,\bullet}(\mathbb{T}^{k}_{\C})\rightarrow \calh_{\del}^{0,\bullet}(M).
\]

Let us assume now that $k=b_1(M)/2$. Then, by \cite[pag. 163]{Bla56}, the manifold is K\"ahler. If the geometrically Bott-Chern formal metric is K\"ahler, by \cite[Proposition 2.1]{TT17}, it is also geometrically formal, hence Theorem \ref{thm:kot_subm} applies. 

If moreover, $k=b_1(M)/2=\dim_{\C}M$, then $M$ is K\"ahler and, by Corollary \ref{cor:geom-BC-form-const-norm} and \eqref{eq:diseq_alb}, it holds that
\[
h^{\bullet,\bullet}(M)=h^{\bullet,\bullet}(\mathbb{T}^{n}_{\C}).
\]
Hence, by \cite{Cat04}, $M$ is biholomorphic to a torus.

To summarize, we have obtained the following theorem.
\begin{thm}\label{thm:fibr-ABC}
Let $M$ be a compact complex manifold admitting a geometrically Bott-Chern formal metric such that the Bott-Chern harmonic $(1,0)$-forms have constant pointwise norm. Let $k:=\dim_{\C} \text{Alb} (M)$. Then, there exists a surjective holomorphic submersion $\pi\colon M\rightarrow \mathbb{T}^{k}_{\C}$ which is injective in $H_{BC}^{\bullet,\bullet}$, $H_{\dbar}^{\bullet,0}$, and $H_{\del}^{0,\bullet}$. Moreover, if 
\begin{itemize}
    \item $k=b_1(M)/2<n$, then $M$ is K\"ahler and if the geometrically Bott-Chern formal metric is K\"ahler, $M$ admits a holomorphic submersion onto a complex torus that is an injective in de Rham cohomology with complex coefficients;
    \item $k=b_1(M)/2=n$, then $M$ is biholomorphic to a complex torus.
\end{itemize}

\end{thm}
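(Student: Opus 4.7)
The plan is to construct the Albanese map explicitly and then read off from the constant-norm hypothesis both that it is a holomorphic submersion and that its pullback is injective on the relevant cohomologies. First I would fix a base point $p_0\in M$ and define $\mathrm{alb}\colon M\to \mathrm{Alb}(M)$ by sending $p$ to the linear functional $\omega\mapsto\int_{p_0}^{p}\omega$ on $\mathcal{H}_{BC}^{1,0}(M)\cong H_{BC}^{1,0}(M)$; the integrals are well-defined modulo the period lattice because Proposition \ref{prop:1} forces every Bott-Chern harmonic $(1,0)$-form to be holomorphic and $d$-closed. Computing the differential yields $d(\mathrm{alb})_p(v)=(\omega\mapsto\omega_p(v))$, so its surjectivity is equivalent to injectivity of the evaluation map $\mathrm{eval}_p\colon\mathcal{H}_{BC}^{1,0}(M)\to T^{*1,0}_p M$.

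The constant pointwise norm hypothesis, combined with the polarization argument of Proposition \ref{prop:abc-form-const}, means that an $L^2$-orthogonal basis of $\mathcal{H}_{BC}^{1,0}(M)$ is a pointwise orthogonal frame at every point. Hence $\mathrm{eval}_p$ has constant rank equal to $k=\dim_{\C}\mathrm{Alb}(M)$ at every $p$, and $\mathrm{alb}$ is a holomorphic submersion onto $\mathbb{T}^{k}_{\C}$. The pullback $\mathrm{alb}^{*}\colon\mathcal{H}_{BC}^{1,0}(\mathbb{T}^{k}_{\C})\hookrightarrow\mathcal{H}_{BC}^{1,0}(M)$ is injective by construction, and via complex conjugation the same holds in bidegree $(0,1)$. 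Since $g$ is geometrically Bott-Chern formal and the constant norm property propagates to wedge products, linearly independent products of pulled-back $(1,0)$ and $(0,1)$ Bott-Chern harmonic forms remain Bott-Chern harmonic and nowhere vanishing. Because $\mathcal{H}_{BC}^{\bullet,\bullet}(\mathbb{T}^{k}_{\C})$ is generated by its $(1,0)$ and $(0,1)$ components, this upgrades to an injection $\mathrm{alb}^{*}\colon H_{BC}^{\bullet,\bullet}(\mathbb{T}^{k}_{\C})\hookrightarrow H_{BC}^{\bullet,\bullet}(M)$; the $H_{\dbar}^{\bullet,0}$ and $H_{\del}^{0,\bullet}$ injectivity then follows from Corollary \ref{cor:ddbarlemma-bc}.

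For the two bullet cases I would argue separately. If $k=b_1(M)/2<n$, then $2k=b_1(M)$ together with a holomorphic submersion onto $\mathbb{T}^{k}_{\C}$ places $M$ in the Kähler class by Blanchard \cite{Bla56}; when the given geometrically Bott-Chern formal metric is itself Kähler, \cite[Proposition 2.1]{TT17} upgrades it to a geometrically formal Riemannian metric, so Kotschick's Theorem \ref{thm:kot_subm} supplies the desired de Rham-injective submersion onto $\mathbb{T}^{b_1(M)}$ which, after complexification, becomes the asserted map onto a complex torus. If instead $k=b_1(M)/2=n$, the injection $H_{BC}^{\bullet,\bullet}(\mathbb{T}^{n}_{\C})\hookrightarrow H_{BC}^{\bullet,\bullet}(M)$ combined with the upper bounds of Corollary \ref{cor:geom-BC-form-const-norm} forces $h_{BC}^{p,q}(M)=h^{p,q}(\mathbb{T}^{n}_{\C})$ for every $(p,q)$, hence the Hodge numbers of $M$ agree with those of a torus, and Catanese's characterization \cite{Cat04} yields a biholomorphism $M\cong\mathbb{T}^{n}_{\C}$.

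The main obstacle I anticipate is the second paragraph: one must verify carefully that the pointwise norm and inner-product information on $\mathcal{H}_{BC}^{1,0}(M)$ actually passes to the full harmonic subalgebra generated by pulled-back classes, so that the wedge of pulled-back forms remains nonzero and harmonic at every point; everything after that is a matter of packaging the Kähler criterion of Blanchard, the geometric-formality implication of \cite{TT17}, Kotschick's submersion theorem, and Catanese's torus characterization.
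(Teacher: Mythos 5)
Your proposal follows essentially the same route as the paper: the explicit Albanese map, the reduction of submersivity to the pointwise injectivity of the evaluation map via the constant-norm/polarization argument, the injectivity of $\mathrm{alb}^*$ on $H_{BC}^{\bullet,\bullet}$ through the multiplicative generation of the torus cohomology in bidegrees $(1,0)$ and $(0,1)$, the transfer to $H_{\dbar}^{\bullet,0}$ and $H_{\del}^{0,\bullet}$ via Proposition \ref{prop:1}, and the same endgame with Blanchard, \cite[Proposition 2.1]{TT17}, Theorem \ref{thm:kot_subm}, and Catanese. The step you flag as the main obstacle is handled in the paper exactly as you suggest, by observing that a globally orthogonal basis of $\calh_{BC}^{1,0}(M)$ stays pointwise linearly independent, so wedges of pulled-back forms are nowhere vanishing and harmonic by geometric Bott-Chern formality.
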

\begin{rmk}
If $M$ admits either a geometrically Bott-Chern and Dolbeault formal metric or a  ABC-geometrically formal metric, then Theorem 2 applies. For the former, Dolbeault and Bott-Chern harmonic $(p,0)$-forms coincide and by Proposition \ref{prop:dolb-const-norm}, they have constant pointwise norm. For the latter, by Proposition \ref{prop:aep-bot-const-norm}, every Bott-Chern harmonic form has constant pointwise norm.
\end{rmk}
\begin{rmk}
Homogenous manifolds, or locally homogeneous manifolds, for which Bott-Chern form cohomology can be computed via invariant forms and which admit invariant geometrically Bott-Chern formal metrics, satisfy the hypotheses of Theorem \ref{thm:fibr-ABC}.
\end{rmk}

\section{Compact complex surfaces and blow-ups}\label{sec:cpt-coplx-srf-bl-up}
It was proved \cite{AT15} that compact complex solvmanifold surfaces, i.e., Inoue surfaces of type $S_\mathcal{M}$ and $S_{\pm}$, primary and secondary Kodaira surfaces, and hyperelliptic surfaces, are geometrically Bott–Chern formal with Bott-Chern harmonic forms of constant pointwise norm. Moreover, they  are all geometrically Dolbeault formal, except for primary Kodaira surfaces. In the same paper, it is shown that the Calabi-Eckmann complex structure on the Hopf surface is both geometrically Dolbeault and Bott-Chern formal.

Hyperelliptic surfaces are, in particular, K\"ahler geometrically formal \cite{ST24}, whereas none of the others satisfy the $\del\dbar$-lemma, hence they are not geometrically Aeppli formal. It can be checked directly that they are ABC-geometrically formal.

In this section, we start by applying the obstructions of the previous section to investigate the existence of Hermitian geometrically formal metrics on classes of minimal compact complex surfaces, beyond the locally homogeneous setting.
\begin{thm}\label{thm:rational}
Minimal rational surfaces of type $\mathbb{CP}^2$ and Hirzebruch surfaces $\Sigma_n$ with $n$ even, are K\"ahler geometrically formal. Hermitian metrics on Hirzebruch surfaces $\Sigma_n$, with $n\geq2$ odd, are never geometrically Aeppli formal.
\begin{proof}
A minimal rational surface $X$ is either $\mathbb{CP}^2$ or Hirzebruch surfaces $\Sigma_n$, with $n=0$ or $n\geq 2$. We observe that $\Sigma_1$ is isomorphic to the blow-up of $\mathbb{CP}^2$ at point and, hence, it is not minimal. Since $b_1(X)=0$, they are always K\"ahler. Moreover, $b_2=1$ for $\mathbb{CP}^2$ and $b_2=2$ for $\Sigma_n$.

Clearly, $\mathbb{CP}^2$ and $\Sigma_0=\mathbb{CP}^1\times \mathbb{CP}^1$ are K\"ahler geometrically formal, since $\mathbb{CP}^2$ and $\mathbb{CP}^1$ are Hermitian symmetric spaces.

We consider now $\Sigma_n$, with $n\geq 2$ even. Let us fix a K\"ahler metric on $\Sigma_n$ and consider the Riemannian metric induced on the smooth manifold $X$ underlying $\Sigma_n$. It is well known that, since $n$ is even, $X$ is diffeomorphic to $\mathbb{S}^2\times \mathbb{S}^2$, and every metric on products of spheres is geometrically formal. As a consequence, the fixed K\"ahler metric is a K\"ahler geometrically formal metric on $\Sigma_n$.

If $n\geq 2$ is odd, it is well known that the diffeomorphism type of $\Sigma_n$ is that of the non-trivial $\mathbb{S}^2$-bundle over $\mathbb{S}^2$. Then $\Sigma_n$ has no geometrically Aeppli formal metrics. Indeed, assume by contradiction, that there exists a geometrically Aeppli formal metric on X. Then, it induces a geometrically formal metric on $\Sigma_n$, which is a contradiction  by \cite[Lemma 9]{Kot1}.
\end{proof}   
\end{thm}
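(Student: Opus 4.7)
The plan is to split the theorem into two halves: the positive half establishes K\"ahler geometric formality for $\mathbb{CP}^2$ and the even Hirzebruch surfaces, while the negative half rules out any geometrically Aeppli formal Hermitian metric on $\Sigma_n$ for $n\geq 2$ odd. First I would recall the Enriques--Kodaira classification of minimal rational surfaces, which consists of $\mathbb{CP}^2$ together with the Hirzebruch surfaces $\Sigma_n$ for $n=0$ or $n\geq 2$, since $\Sigma_1$ is the blow-up of $\mathbb{CP}^2$ at a point and hence non-minimal; all these surfaces have $b_1=0$ and so admit K\"ahler metrics. For $\mathbb{CP}^2$ and $\Sigma_0\cong\mathbb{CP}^1\times\mathbb{CP}^1$ I would take the Fubini--Study metric, respectively the product of round Fubini--Study metrics, so that the surface becomes an irreducible, respectively a product of irreducible, compact Hermitian symmetric space. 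On such spaces every harmonic form is parallel and parallel forms are closed under the wedge product, so the wedge of harmonic forms is automatically harmonic, giving K\"ahler geometric formality at once.

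For $\Sigma_n$ with $n\geq 2$ even, I would invoke the classical topological fact that the diffeomorphism type of $\Sigma_n$ is determined by the parity of $n$: in particular the underlying smooth $4$-manifold is diffeomorphic to $\mathbb{S}^2\times\mathbb{S}^2$. The next step is to use that every Riemannian metric on $\mathbb{S}^2\times\mathbb{S}^2$ is geometrically formal (a low-dimensional phenomenon exploited in \cite{Kot01}); since geometric formality is a diffeomorphism invariant of the metric, any K\"ahler metric on $\Sigma_n$ is then geometrically formal. Because a K\"ahler metric that is geometrically formal is automatically K\"ahler geometrically formal (see Diagram \ref{diag:metrics}), this completes the positive half.

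For the negative half, I would argue by contradiction: suppose $g$ is a geometrically Aeppli formal Hermitian metric on $\Sigma_n$ with $n\geq 2$ odd. By Lemma \ref{lemma:geom-aeppli-form-def}, the complex-valued de Rham harmonic forms decompose in bigraded pieces and form an algebra under the wedge product, so in particular $g$ is a geometrically formal Riemannian metric on the underlying smooth $4$-manifold. But for $n$ odd, $\Sigma_n$ is diffeomorphic to the nontrivial $\mathbb{S}^2$-bundle over $\mathbb{S}^2$, and by \cite[Lemma 9]{Kot1} this manifold admits no geometrically formal Riemannian metric, a contradiction. The main obstacle in the whole plan is the transfer step in the even case: one needs the fact that every Riemannian metric on $\mathbb{S}^2\times\mathbb{S}^2$ is geometrically formal, which is the place where the rigidity of dimension four plays an essential role and the argument genuinely requires input from real four-dimensional Hodge theory rather than complex geometry alone.
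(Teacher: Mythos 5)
Your proposal reproduces the paper's proof essentially step for step: the same reduction via the classification of minimal rational surfaces, the same Hermitian symmetric space argument for $\mathbb{CP}^2$ and $\Sigma_0$, the same transfer through the diffeomorphism $\Sigma_n\cong\mathbb{S}^2\times\mathbb{S}^2$ in the even case, and the same contradiction via \cite[Lemma 9]{Kot1} combined with Lemma \ref{lemma:geom-aeppli-form-def} in the odd case. So the comparison is: same approach throughout.

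That said, the step you yourself single out as the main obstacle --- that \emph{every} Riemannian metric on $\mathbb{S}^2\times\mathbb{S}^2$ is geometrically formal --- is exactly the point where both your argument and the paper's lean on more than \cite{Kot01} actually provides. What Kotschick proves is that products of rational homology spheres \emph{admit} geometrically formal metrics (product metrics work, because on each factor the only harmonic forms are constants and volume forms); this is an existence statement, not a statement about all metrics, and it already fails for all metrics in the simplest case: a non-flat metric on $\mathbb{S}^1\times\mathbb{S}^1$ has harmonic $1$-forms of non-constant norm, hence is not formal. The stronger ``all metrics'' version is what is genuinely needed here, because the diffeomorphism $\Sigma_n\cong\mathbb{S}^2\times\mathbb{S}^2$ for $n\geq 2$ even is not a biholomorphism, so the formal product metric cannot be transported to a K\"ahler metric on $\Sigma_n$. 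Concretely, for a K\"ahler metric on $\Sigma_n$ ($n$ even) one has $b^+=b^-=1$, the self-dual harmonic $2$-forms are spanned by the K\"ahler form (whose norm is automatically constant), and the only remaining condition for geometric formality is that the primitive harmonic $(1,1)$-form have constant pointwise norm --- a genuine restriction on the metric that neither you nor the paper verifies. To close this, you would need either to exhibit a specific K\"ahler metric on $\Sigma_n$ whose primitive harmonic $(1,1)$-form has constant norm, or to supply an actual reference for the ``every metric on $\mathbb{S}^2\times\mathbb{S}^2$ is formal'' claim; as written, this is a gap, albeit one you share with the paper.
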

\begin{rmk}
Since Hirzebruch surfaces are K\"ahler and, hence, satisfy the $\del\dbar$-lemma, by Lemma \ref{lemma:geom-aeppli-form-def}, Hirzebruch surfaces $\Sigma_n$, with $n$ odd, admit no ABC-geometrically formal metrics.
\end{rmk}
Ruled surfaces of genus $0$ are Hirzebruch surfaces, hence Theorem \ref{thm:rational} applies. Moreover, by \cite[Theorem 4]{Kot01} ruled surfaces of genus 1 admit geometrically formal metrics. If such metrics are compatible with the complex structures and K\"ahler, then such a ruled surface is K\"ahler geometrically formal. 
\begin{thm}\label{thm:ruled}
Ruled surfaces of genus $g\geq 2$, class VII minimal surfaces with $b_2>6$, classical Enriques surfaces, and K3 surfaces are not geometrically formal, nor geometrically Dolbeault formal, nor geometrically Bott-Chern formal (CN).
\begin{proof}
Let $X$ be a ruled surface. Then, it is well known that $X$ is K\"ahler, $h^{1,0}(X)=h^{0,1}(X)=g$, where $g$ is the genus of $X$, and $h^{1,1}(X)=b_2(X)=2$.

We first consider $g=2$, so that $b_1(X)=2g=4$. If $X$ admits a geometrically formal metric, then $M$ is diffeomorphic to a torus by Theorem \ref{thm:kot_subm},  which yields a contradiction. Moreover, if $X$ admits a geometrically Dolbeault formal metric, since
$$
h^{1,0}(X)\cdot h^{0,1}(X)=4>2=h^{1,1}(X),
$$
we obtain a contradiction by Theorem \ref{thm:geom-dolb-top}. Finally, if $X$ admits a geometrically Bott-Chern formal metric with Bott-Chern harmonic $(1,0)$-forms with constant pointwise norm, since $b_1(M)=4=b_1(\mathbb{T}^4)$, by Theorem \ref{thm:fibr-ABC}, $X$ is biholomorphic to a complex torus, which forces a contradiction. Hence, ruled surfaces of genus $g=2$ admit no geometrically formal, nor geometrically Dolbeault formal, nor geometrically Bott-Chern formal (CN) metrics.

Assume now that the genus $g>2$. But then, $b_1(X)=2g>4=b_1(\mathbb{T}^4)$. Hence, by \cite[Theorem 6]{Kot01}, Theorem \ref{thm:geom-dolb-top}, and Corollary \ref{cor:geom-BC-form-const-norm}, the thesis follows.

If $X$ is either any class VII surface with $b_2> 6$, or a $K3$ surface, or a classical Enriques surface $E$, then
\[
b_2(K_3)=22, \qquad b_2(E)=10.
\]
Since $b_2(\mathbb{T}^6)=6$, the thesis follows again by \cite[Theorem 6]{Kot01}, Theorem  \ref{thm:geom-dolb-top} and Corollary \ref{cor:geom-BC-form-const-norm}.
\end{proof}
\end{thm}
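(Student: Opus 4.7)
My plan is to exploit the Hodge-theoretic and topological upper bounds collected in the previous section---namely \cite[Theorem 6]{Kot01}, Theorem \ref{thm:geom-dolb-top}, Corollary \ref{cor:geom-BC-form-const-norm}, and Theorem \ref{thm:fibr-ABC}---and compare them against the standard invariants of each surface class. Since each of the three notions of formality considered here (geometric, Dolbeault, and Bott-Chern (CN)) forces the bound $b_k(M)\leq b_k(\mathbb{T}^{2n})$, the strategy is to identify, in each case, an invariant of $X$ that strictly exceeds the corresponding invariant of the torus of the same real dimension, or, in a marginal case, to appeal to a rigidity statement.

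For class VII minimal surfaces with $b_2>6$, $K3$ surfaces, and classical Enriques surfaces, the argument is immediate: $b_2(K3)=22$, $b_2(\text{Enriques})=10$, and $b_2>6$ in the remaining case, all of which exceed $b_2(\mathbb{T}^4)=6$. This directly violates the Betti bound implied by any of the three formality notions.

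For ruled surfaces $X$ of genus $g\geq 2$, I would use the K\"ahler structure together with the classical computation $h^{1,0}(X)=h^{0,1}(X)=g$ and $h^{1,1}(X)=b_2(X)=2$. If $g>2$, then $b_1(X)=2g>4=b_1(\mathbb{T}^4)$, which directly violates the Betti inequality in all three settings. The genus-two case is the most delicate, because $b_1(X)=4$ is compatible with the torus bound, and has to be treated through three separate sub-arguments: for geometric formality, Theorem \ref{thm:kot_subm} would produce a smooth submersion $X\to\mathbb{T}^4$ injective on cohomology, forcing $b_2(X)\geq b_2(\mathbb{T}^4)=6$ and contradicting $b_2(X)=2$; for geometric Dolbeault formality, the product inequality $h^{1,0}(X)\cdot h^{0,1}(X)\leq h^{1,1}(X)$ from Theorem \ref{thm:geom-dolb-top} reads $4\leq 2$, a contradiction; and for geometric Bott-Chern (CN) formality, the equality $b_1(X)/2=2=\dim_\C X$ triggers the second alternative of Theorem \ref{thm:fibr-ABC}, forcing $X$ to be biholomorphic to a complex torus, which contradicts the existence of a nontrivial ruling.

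The main obstacle is precisely the genus-two ruled surface: none of the obstructions reduces there to a single Betti-number comparison, so one is compelled to deploy three qualitatively different rigidity results---Kotschick's submersion theorem, the Hodge product bound, and the Albanese biholomorphism criterion from Theorem \ref{thm:fibr-ABC}---to rule out the three notions of formality separately.
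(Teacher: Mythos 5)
Your proposal is correct and follows essentially the same route as the paper: Betti-number comparisons against $\mathbb{T}^4$ for the class VII, $K3$, Enriques, and genus $g>2$ cases, and the same three separate obstructions (Kotschick's submersion theorem, the product inequality of Theorem \ref{thm:geom-dolb-top}, and the Albanese rigidity of Theorem \ref{thm:fibr-ABC}) for the genus-two case. The only cosmetic differences are that in the genus-two geometric-formality sub-case you extract the contradiction from the cohomology injection ($b_2\geq b_2(\mathbb{T}^4)=6$) rather than from the diffeomorphism-to-torus conclusion, and you correctly write $b_2(\mathbb{T}^4)=6$ where the paper has the typo $b_2(\mathbb{T}^6)=6$.
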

\begin{rmk}
By \cite[Corollary 3]{PSZ24}, ruled surfaces of genus $g\geq 2$ always admit non-vanishing triple ABC-Massey products, hence they are also not geometrically Bott-Chern formal.
\end{rmk}
\begin{rmk}
The case of geometrically formal metrics on $K3$ was discussed in detail by Kotschick \cite{Kot01} and Huybrechts \cite{Huy00}.
\end{rmk}
Concerning the existence of Hermitian geometrically formal metrics on compact K\"ahler non-minimal surfaces, or, more in general, on blow-ups of compact K\"ahler manifolds, we  prove the following. 
\begin{thm}\label{thm:blow-up-kahler}
On the blow-up of any compact K\"ahler manifold along any compact complex submanifold, the standard blow-up metric is not geometrically formal.

\begin{proof}
Let $\pi\colon \tilde{M}\rightarrow M$ be the blow-up of a compact complex $n$-dimensional manifold $M$ along a compact complex submanifold $Y$ of complex codimension $k\geq 2$. In particular, $\pi^{-1}(Y)$ has a natural structure of projective bundle $\mathbb{P}(\mathcal{N}_{Y/X})\xrightarrow{\pi}Y$, with fibers $\pi^{-1}(p\in  Y)\cong \mathbb{CP}^{n-k-1}$.

Let $\Omega$ be a real closed $(1,1)$-form compactly supported in $\pi^{-1}(U)$, $U\supset Y$ being the neighborhood of $Y$ in $M$ used to define the blow-up, and which is positive definite on $\pi^{-1}(Y)$ along the fibers of $\pi_\ast$. Let $\pi^*\omega$ be the pullback on $\tilde{M}$ of the K\"ahler form $\omega$ on $M$.
By compactness, there exists
$0<\varepsilon\ll 1$ so that $\tilde{\omega}:=\pi^*\omega+\varepsilon \Omega$ is positive also in $\pi^{-1}(U\setminus Y)$. The metric $\tilde{\omega}$ defines a K\"ahler structure on $\tilde{M}$ \cite{Bla56} with associated volume form given by
\[
\frac{\tilde{\omega}^n}{n!}=\frac{1}{n!}\pi^\ast\omega^n+\sum_{i=1}^{n-k-1}\begin{pmatrix}
    n\\i
\end{pmatrix}\pi^*\omega^{n-i}\wedge \Omega^i.
\]
Note that $[\pi^*\omega]\neq 0$ in $H^{1,1}(\tilde{M})$, since $[\omega]\neq 0$ in $H^{1,1}(M)$ and $\pi$ is a biholomorphism between $\tilde{M}\setminus \pi^{-1}(U)$ and $M\setminus\{p\}$. Analogously, $[\Omega]\neq 0$ in $H^{1,1}(\tilde{M})$ since $\Omega$ is the Poincaré dual of the exceptional divisor $\pi^{-1}(Y)$ of $\tilde{M}$. Finally, the volume form $\tilde{\omega}^n/n!$ is $\tilde{\omega}$-harmonic, and hence $[\tilde{\omega}^n/n!]\neq 0$.

Let us assume that $\tilde{\omega}$ is geometrically formal. We have two possibilities for $\pi^*\omega$: either   $\pi^*\omega$ is harmonic or it is not.

If $\pi^{*}\omega$ is harmonic, then $0\neq \pi^*\omega^n$ is harmonic, but it is not a multiple of the volume form, hence a contradiction.

If $\pi^*\omega$ is not harmonic, let $\pi^*\omega+\del\dbar f$, $f\in \mathcal{C}^{\infty}(U)$, $\overline{\text{supp}(f)}\subset U$ be the unique harmonic representative of $[\pi^*\omega]$ with respect to $\tilde{\omega}$. Then, since $\pi^\ast\omega^n\neq 0$, also $(\pi^*\omega+\del\dbar f)^n\neq 0$ from Lemma \ref{lemma:change-metric-bott}, and since $\tilde{\omega}$ is geometrically formal, $(\pi^*\omega+\del\dbar f)^n\neq 0$ is harmonic. This implies that there exists $k\in \C$ such that
\[
\frac{k}{n!}\tilde{\omega}^n=(\pi^*\omega+\del\dbar f)^n,
\]
i.e.,
\[
k\left(\frac{1}{n!}\pi^\ast\omega^n+\sum_{i=1}^{n-k-1}\begin{pmatrix}
    n\\i
\end{pmatrix}\pi^*\omega^{n-i}\wedge \Omega^i\right)=\pi^{*}\omega^n+\sum_{i=1}^{n}\begin{pmatrix}
    n\\i
\end{pmatrix}\pi^*\omega^{n-i}\wedge (\del\dbar f)^i.
\]
Since $\overline{\text{supp}(\Omega)},\overline{\text{supp}(f)}\subset U$, the above equation forces $k=n!$, which yields
\[
(\pi^*\omega+\Omega)^n=(\pi^*\omega+\del\dbar f)^n.
\]
Let us denote $\alpha:=\pi^*\omega+\Omega$, $\beta:=\pi^*\omega+\del\dbar f$.
By taking $n$ contractions of the above equation with respect to $\alpha^\sharp$ and $\beta^\sharp$ respectively, we obtain
\[
n!(||\alpha||^{2}_{\tilde{g}})^n=2\tilde{g}(\alpha,\beta)^n
\]
and
\[
n!\tilde{g}(\beta,\alpha)^n=n!(||\beta||^2_{\tilde{g}})^n.
\]
It follows that
\[
||\alpha||_{\tilde{g}}^2=||\beta||_{\tilde{g}}^2, \quad 
\tilde{g}(\alpha,\beta)=\begin{cases}||\alpha||^{2}_{\tilde{g}}, \,\,\,\,\quad n\,\,\text{odd},\\
\pm ||\alpha||^{2}_{\tilde{g}}, \quad n\,\,\text{even}.
\end{cases}
\]
This implies that $\alpha=\beta$ for $n$ odd and $\alpha=\pm \beta$ for $n$ even. However, since $\overline{\text{supp}(\Omega)}, \overline{\text{supp}(f)}\subset U$ and the equality holds on $\tilde{M}$, we have that 
\[
\pi^*\omega+\Omega= \pi^*\omega+\del\dbar f,
\]
in both cases. Then
\[
\Omega= \del\dbar f, \quad  \implies [\Omega]=0\in H^{1,1}(\tilde{M}),
\]
which yields a contradiction, since $[\Omega]\neq 0$. As a consequence, the metric $\tilde{\omega}$ is not geometrically formal.
\end{proof}
\end{thm}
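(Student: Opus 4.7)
The plan is to argue by contradiction, assuming the standard blow-up metric $\tilde\omega = \pi^*\omega + \varepsilon\Omega$ on $\tilde M$ is geometrically formal. Here $\omega$ is the K\"ahler form on $M$, $Y \subset M$ is the submanifold being blown up, and $\Omega$ is a closed real $(1,1)$-form compactly supported in a tubular neighborhood $U$ of the exceptional divisor $E = \pi^{-1}(Y)$, positive along the fibers of $\pi|_E$, so that $[\Omega]\in H^{1,1}(\tilde M;\R)$ is (proportional to) the Poincar\'e dual of $E$ and in particular nonzero. The volume form $\tilde\omega^n/n!$ is obviously $\tilde\omega$-harmonic; the strategy is to play it off against the harmonic representative of $[\pi^*\omega]$.

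First I would identify that harmonic representative. The class $[\pi^*\omega]\in H^{1,1}(\tilde M;\R)$ is nonzero since $\pi$ has degree one, and by the $\partial\bar\partial$-lemma on the K\"ahler manifold $\tilde M$ there exists $f\in\mathcal C^\infty(\tilde M)$ with $\beta:=\pi^*\omega+\partial\bar\partial f$ the $\tilde\omega$-harmonic representative. Since $\pi^*\omega^n$ does not vanish on the open dense set $\tilde M\setminus E$, Lemma \ref{lemma:change-metric-bott} gives $\beta^n\neq 0$, and by the assumed geometric formality $\beta^n$ is a nonzero harmonic top form. As the space of harmonic top forms is one-dimensional, $\beta^n = c\cdot\tilde\omega^n/n!$ for some $c\in\R^*$. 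Choosing $f$ so that $\overline{\mathrm{supp}(f)}\subset U$ (which is possible because $\pi^*\omega$ is already harmonic on $\tilde M\setminus\overline U$), both $\varepsilon\Omega$ and $\partial\bar\partial f$ vanish outside $\overline U$; there $\beta=\pi^*\omega=\tilde\omega$, forcing $c=n!$. Thus we arrive at the pointwise identity
\[
(\pi^*\omega + \partial\bar\partial f)^n = (\pi^*\omega + \varepsilon\Omega)^n.
\]

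The last step, which I expect to be the main obstacle, is to deduce from the equality of $n$-th powers of two real $(1,1)$-forms that the forms themselves coincide; this is not purely algebraic and requires the positivity and the information from $\tilde\omega$. Setting $\alpha:=\pi^*\omega+\varepsilon\Omega$, I would contract the top-degree identity repeatedly against the metric duals $\alpha^\sharp$ and $\beta^\sharp$ to extract pointwise equalities of norms $\|\alpha\|_{\tilde g}^2=\|\beta\|_{\tilde g}^2$ and of inner products $\tilde g(\alpha,\beta)^n=\|\alpha\|_{\tilde g}^{2n}$; equality in Cauchy--Schwarz then yields $\alpha=\pm\beta$ pointwise, and the match outside $\overline U$ (where $\alpha=\beta=\pi^*\omega$) rules out the minus sign. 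We conclude $\varepsilon\Omega=\partial\bar\partial f$, hence $[\Omega]=0\in H^{1,1}(\tilde M)$, contradicting the nontriviality of the exceptional divisor class. This contradiction shows that the standard blow-up metric cannot be geometrically formal.
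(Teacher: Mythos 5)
Your proposal is correct and follows essentially the same route as the paper's proof: identify the $\tilde\omega$-harmonic representative $\pi^*\omega+\partial\bar\partial f$ with potential supported in $U$, use Lemma \ref{lemma:change-metric-bott} and one-dimensionality of harmonic top forms to get $(\pi^*\omega+\partial\bar\partial f)^n=\tilde\omega^n$ after fixing the constant by the support condition, then contract against $\alpha^\sharp$ and $\beta^\sharp$ to force $\alpha=\beta$ via equality in Cauchy--Schwarz, contradicting $[\Omega]\neq 0$. The only cosmetic difference is that you treat the case where $\pi^*\omega$ is already harmonic uniformly (taking $f=0$) rather than as a separate case as the paper does.
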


\begin{rmk}
Huybrechts \cite[§6]{Huy00} proved that no K\"ahler metric on the blow-up of a projective variety can be geometrically formal. Theorem \ref{thm:blow-up-kahler} above aligns with this result and focuses on a broader class of smooth manifolds, at the price of losing the generality of the K\"ahler metric.
\end{rmk}

Strong cohomological properties, such as the $\del\dbar$-lemma, do not necessarily imply the existence of Hermitian geometrically formal metrics.

Indeed, we provide an example of a compact non-K\"ahler complex manifold which satisfies the $\del\dbar$-lemma, hence is Dolbeault and Sullivan formal, but is not geometrically formal, nor geometrically Dolbeault formal, nor geometrically Bott-Chern formal.
\begin{exa}
We recall the construction of \cite[§ 7]{SfeTom22}. Let $M$ be the Iwasawa manifold, i.e., $M$ is the complex parallelisable manifold $M=\Gamma\backslash G$, where
\[
G=\left\{\begin{pmatrix}
    1 & z_1  & z_3\\
    0 & 1 & z_2\\
    0 & 0 &1
\end{pmatrix}:z_i\in\C, i\in\{1,2,3\}\right\},\qquad\Gamma=G\cap GL(3,\Z[i]).
\]
Equivalently, the complex Lie group $G$ and its subgroup $\Gamma$ can be identified with $(\C^3,\star)$ and $(\Z[i]^3,\star)$ respectively, where
\[
(z_1,z_2,z_3)\star(w_1,w_2,w_3):=(z_1+w_1,z_2+w_2,z_3+z_1w_2+z_3)
\]
The Lie group automorphism of $(\C^3,\star)$ defined by $$\sigma(z_1,z_2,z_3):=(iz_1,-iz_2,-z_3)$$  descends to a holomorphic automorphism of $M$ and acts with $16$ fixed points. The global-quotient type orbifold $\hat{M}:=M/\langle\sigma\rangle$ satisfies the $\del\dbar$-lemma \cite[Lemma 7.4]{SfeTom22}, and has non-vanishing Bott-Chern numbers
\begin{align*}
    h_{BC}^{0,0}(\hat{M})=1, \quad h_{BC}^{1,1}(\hat{M})=4, \quad h_{BC}^{3,0}(\hat{M})=1,\\
    h_{BC}^{0,3}(\hat{M})=1, \quad h_{BC}^{2,2}(\hat{M})=4, \quad h_{BC}^{3,3}(\hat{M})=1.
\end{align*}

The resolution $\tilde{M}$ of $\hat{M}$ is explicitly constructed by, first, blowing up the fixed loci of  $\psi:=\sigma^2$ on $M$, consisting of $8$ disjoint complex curves $\mathcal{C}_i$ on $M$. Each exceptional divisor $E_i$ is isomorphic to $\mathbb{P}(\mathcal{N}_{\mathcal{C}_i|M})$.
On the resulting manifold, we perform a further blow-ups along the fixed locus of the induced action of $\sigma$, which is the disjoint union of eight complex curves, and then we take the quotient with respect to the action of $\sigma$.

This procedure yields a smooth compact complex non-K\"ahler (see \cite[Remark 4.4.6]{Sfe23}) manifold  $\tilde{M}$  which satisfies the $\del\dbar$-lemma \cite[Theorem 7.1]{SfeTom22} and, hence, is formal according to Sullivan and strictly Dolbeault formal in the sense of Neisendorfer and Taylor \cite{NT78}. In particular, $\tilde{M}$ is obtained via the combination of blow-ups along $16$ disjoint curves in $M$ and the quotient with respect to the action of $\sigma$.

From the formulas for Bott-Chern cohomology of the blow-up $\pi\colon Y\rightarrow X$ of a threefold $X$ along a smooth curve $Z$, see, e.g., \cite[Theorem 5.1]{Ste22} and \cite[Equation (4.3)]{YanYan20},
\[
H_{BC}^{p,q}(Y)=H_{BC}^{p,q}(X)\oplus H_{\dbar}^{p-1,q-1}(Z),
\]
we can easily compute that
\begin{align*}
    h_{BC}^{0,0}(\tilde{M})=1, \quad h_{BC}^{1,1}(\tilde{M})=20, \quad h_{BC}^{3,0}(\tilde{M})=1,\\
    h_{BC}^{0,3}(\tilde{M})=1, \quad h_{BC}^{2,2}(\tilde{M})=20, \quad h_{BC}^{3,3}(\tilde{M})=1,
\end{align*}
from which
\begin{align*}
    b_0(\tilde{M})=1, \quad b_1(\tilde{M})=0, \quad b_2(\tilde{M})=20, \quad b_3(\tilde{M})=2,\\ b_4(\tilde{M})=20, \quad b_5(\tilde{M})=0, \quad b_6(\tilde{M})=1.
    \end{align*}
Since $$9=h^{1,1}(\mathbb{T}^6)<h_{BC}^{1,1}(\tilde{M})=h_{\dbar}^{1,1}(\tilde{M})=h_A^{1,1}(\tilde{M}), \quad 15=b_2(\mathbb{T}^6)<b_{2}(\tilde{M}),$$ from \cite[Theorem 6]{Kot01} and Theorem \ref{thm:geom-dolb-top}, the manifold $\tilde{M}$ does not admit geometrically formal, nor geometrically Dolbeault formal metrics. The non-existence of geometrically Bott-Chern formal metric is a direct consequence of the existence of a non-vanishing triple ABC-Massey product \cite[Theorem 7.1]{SfeTom22}.
\end{exa}
\begin{rmk}
Theorem \ref{thm:blow-up-kahler} suggests that the blow-up operation disrupts the geometric formality of the K\"ahler metric of the base. On the other hand, the examples above, e.g., $K3$ surfaces, Example 1, and any blow-up of a complex torus (Theorem \ref{thm:blow-up-torus}), highlight that the existence of geometrically formal and Hermitian formal metrics on the blow-up of a compact complex manifold is obstructed also by the topology of the manifold.
\end{rmk}
\begin{rmk}
Example 1 and the example constructed in \cite[§4]{PSZ24}, which is obtained as a sequence of blow-ups of a geometrically Bott-Chern formal manifold, i.e., $\mathbb{CP}^3$, are both manifolds which admit non-vanishing triple ABC-Massey products, and hence, no geometrically Bott-Chern formal metrics. It would then be interesting to find simple topological obstructions also for the geometric Bott-Chern formality of blow-ups of compact complex manifolds.
\end{rmk}

\section{Curvature techniques for K\"ahler geometrically formal manifolds}\label{sec:weitz}
In this short section, we provide existence results for K\"ahler geometrically formal metrics exploiting nonnegative curvature properties. 

The argument relies on the Gallot-Meyer theorem \cite{GM75}, which proves that on a smooth compact manifold with nonnegative  curvature operator, every harmonic form is parallel with respect to the Levi-Civita connection. Indeed, the de Rham Laplacian acting on a smooth $k$-form on a $2n$-dimensional smooth compact manifold $M$ can be written as the Lichnerowicz Laplacian 
\[
\Delta_{dR}\alpha=\nabla^*\nabla\alpha+\text{Ric}(\alpha),
\]
where $\text{Ric}(\alpha)$ is the \emph{Weitzenb\"ock curvature operator} given by
\[
\text{Ric}(\alpha)(X_1,\dots, X_K):=\sum_{i=1}^k\sum_{j=1}^n(R(e_j,X_i)\alpha)(X_1,\dots, e_j, \dots, X_k).
\]
If $\alpha$ is harmonic, i.e., $\Delta_{dR}\alpha=0$ or, equivalently,  $\nabla^*\nabla\alpha=-\text{Ric}(\alpha)$, we have the Bochner formula 
\[
\frac{1}{2}\Delta|\alpha|^2=||\nabla\alpha||^2-g(\nabla^*\nabla\alpha,\alpha)=||\nabla\alpha||^2+g(\text{Ric}(\alpha),\alpha)\geq 0,
\]
where the last inequality stems from the assumption that $\mathcal{R}\geq 0$. By the maximum principle, we obtain $\Delta|\alpha|^2=\nabla\alpha=0$, i.e., $\alpha$ is parallel.

\begin{thm}\label{thm:curv-kahl-non-neg-curv}
Let $M$ be a compact complex manifold with a K\"ahler metric with nonnegative curvature operator $g$. Then $g$ is geometrically formal and $M$ is strongly formal.
\end{thm}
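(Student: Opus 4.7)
The plan is to run the Bochner/Gallot--Meyer machinery that is already set up in the paragraph preceding the theorem, and then translate geometric formality into strong formality via the diagram \eqref{diag:metrics} and Proposition \ref{prop:0}.

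First I would invoke the Gallot--Meyer theorem, as discussed in the text, to conclude that every de Rham harmonic form on the compact Riemannian manifold $(M,g)$ is parallel with respect to the Levi-Civita connection $\nabla$. The point is that the Bochner formula $\tfrac{1}{2}\Delta|\alpha|^{2}=\|\nabla\alpha\|^{2}+g(\mathrm{Ric}(\alpha),\alpha)$ combined with $\mathcal{R}\geq 0$ and the maximum principle forces $\nabla\alpha=0$ for every $\alpha\in\mathcal{H}^{\bullet}_{dR}(M,g)$. Next I would use two elementary facts: (a) if $\alpha,\beta$ are $\nabla$-parallel then so is $\alpha\wedge\beta$, because $\nabla$ is a tensor derivation, and (b) any $\nabla$-parallel form is de Rham harmonic, because $d$ is the antisymmetrization of $\nabla$ and $d^{*}$ is (up to sign) the trace of $\nabla$, so $\nabla\alpha=0$ implies $d\alpha=d^{*}\alpha=0$. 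Composing (a) and (b) with Gallot--Meyer gives that the wedge product of harmonic forms is harmonic, i.e.\ $g$ is geometrically formal.

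Now I would promote geometric formality to strong formality using that $g$ is Kähler. In the Kähler setting, the standard Kähler identities give $\Delta_{dR}=2\Delta_{\dbar}=2\Delta_{\partial}$ and all the Bott--Chern and Aeppli harmonic spaces agree with $\mathcal{H}_{\dbar}^{\bullet,\bullet}(M,g)$, together with the Hodge decomposition $\mathcal{H}^{k}_{dR,\mathbb{C}}(M,g)=\bigoplus_{p+q=k}\mathcal{H}^{p,q}_{\dbar}(M,g)$. Since the wedge of two de Rham harmonic forms is again de Rham harmonic by the previous step, this verifies condition (3) of Lemma \ref{lemma:geom-aeppli-form-def}, hence $g$ is geometrically Aeppli formal (equivalently, Kähler geometrically formal in the language of diagram \eqref{diag:metrics}). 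Finally, Proposition \ref{prop:0} yields that $M$ is strongly formal.

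The main step I expect to require care is the verification that the implication ``$g$ K\"ahler geometrically formal in the Riemannian sense $\Rightarrow$ geometrically Aeppli formal'' is immediate from Kähler identities — this is essentially the arrow ``K\"ahler geom. form.\ $\to$ Geom.\ Aeppli form.'' in diagram \eqref{diag:metrics}, but I would spell out the identification of all Laplacians so that the statement of Lemma \ref{lemma:geom-aeppli-form-def} can be applied directly. Everything else (Bochner, parallel-implies-harmonic, algebra structure of parallel forms) is standard and short.
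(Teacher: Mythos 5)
Your proposal is correct and follows essentially the same route as the paper: Gallot--Meyer plus the Leibniz rule to get that products of harmonic forms are parallel, hence harmonic, and then the K\"ahler identities to pass through geometric Aeppli formality (condition (3) of Lemma \ref{lemma:geom-aeppli-form-def}) and invoke Proposition \ref{prop:0} for strong formality. The only difference is cosmetic: the paper restricts to pure-bidegree harmonic forms from the start and cites Proposition \ref{prop:0} directly, while you make the intermediate step ``K\"ahler geometrically formal $\Rightarrow$ geometrically Aeppli formal'' explicit, which is a reasonable bit of extra care.
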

\begin{proof}
Let $M$ be a compact complex manifold endowed with a K\"ahler metric of non-negative curvature operator.

We recall that $\nabla^{LC}\alpha=0$ implies that $d\alpha=d^*\alpha=0$. Since $g$ is a K\"ahler metric on $M$, by the K\"ahler identities we have that
\[
\mathcal{H}_{dR}^{k}(M;\C)=\bigoplus_{p+q=k}\calh^{p,q}(M),
\]
hence we can consider de Rham harmonic forms with pure bidegree. Let then $\alpha\in \mathcal{H}^{p,q}(M),\beta\in \calh^{r,s}(M)$. Then, since $g$ has nonnegative curvature operator, $\nabla^{LC}\alpha=\nabla^{LC}\beta=0$ by \cite{GM75}. By the Leibniz rule then
\[
\nabla^{LC}(\alpha\wedge \beta)=\nabla^{LC}\alpha\wedge \beta + \alpha\wedge\nabla^{LC}\beta=0,
\]
which implies that 
\[
d(\alpha\wedge \beta)=0, \qquad d^*(\alpha\wedge \beta)=0,
\]
meaning that $\alpha\wedge \beta\in \calh^{p+r,q+s}(M)$, i.e., the K\"ahler metric $g$ is geometrically formal. As a result, by Proposition \ref{prop:0}, $M$ is also strongly formal.
\end{proof}
We immediately obtain the following result.
\begin{cor}\label{cor:kahler-flat}
Compact K\"ahler flat manifolds are geometrically formal and strongly formal.
\end{cor}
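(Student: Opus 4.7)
The plan is to derive this corollary as an immediate specialization of Theorem \ref{thm:curv-kahl-non-neg-curv}. A K\"ahler flat manifold is, by definition, a compact K\"ahler manifold whose underlying Riemannian metric has vanishing Riemann curvature tensor. Consequently the curvature operator $\mathcal{R}\colon \Lambda^2 TM \to \Lambda^2 TM$ is identically zero, and in particular it is nonnegative (even nonnegative semidefinite in the strongest sense). This places us squarely in the hypothesis of Theorem \ref{thm:curv-kahl-non-neg-curv}.

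First I would simply invoke that theorem: flatness $\Rightarrow$ $\mathcal{R}\geq 0$, hence the K\"ahler metric is geometrically formal and the manifold is strongly formal. No additional argument is needed since the corollary makes no claim beyond what the theorem gives, specialized to the flat case.

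If one wishes to expose the mechanism concretely rather than treating it as a black box, one can observe that in the flat case the Bochner formula reduces to
\[
\tfrac{1}{2}\Delta |\alpha|^2 = \|\nabla^{LC}\alpha\|^2,
\]
for any harmonic form $\alpha$, since $\operatorname{Ric}(\alpha)=0$. The maximum principle on the compact manifold $M$ then forces $\nabla^{LC}\alpha=0$, so every de Rham harmonic form is parallel. The wedge product of parallel forms is parallel by the Leibniz rule, hence closed and co-closed, and, using the K\"ahler identities to split $\mathcal{H}_{dR}^{k}(M;\C)=\bigoplus_{p+q=k}\mathcal{H}^{p,q}(M)$, one concludes geometric formality. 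Strong formality then follows from Proposition \ref{prop:0} after noting that geometric formality of a K\"ahler metric in the standard sense coincides with K\"ahler geometric formality, which implies geometric Aeppli formality by item (viii) of Diagram \ref{diag:metrics}.

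There is essentially no obstacle here: the entire content lies in Theorem \ref{thm:curv-kahl-non-neg-curv}, and the corollary is its cleanest application. The only mild subtlety is making sure the reader sees that ``K\"ahler flat'' is not merely a topological flatness but a metric condition, so that the Gallot–Meyer hypothesis is genuinely satisfied and not just vacuously invoked.
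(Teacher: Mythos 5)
Your proposal is correct and matches the paper exactly: the corollary is stated there as an immediate consequence of Theorem \ref{thm:curv-kahl-non-neg-curv}, since a flat metric has identically vanishing, hence nonnegative, curvature operator. Your optional unpacking of the Bochner argument is consistent with the proof of that theorem and adds nothing that needs checking.
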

Compact K\"ahler flat manifolds are characterized in \cite{Rog} as quotients of complex tori by finite groups acting holomorphically and freely.
\begin{cor}\label{cor:kahler-solv}
Let $(M,J)$ be a K\"ahler solvmanifold. Then, $(M,J)$ is geometrically formal and strongly formal.
\begin{proof}
By \cite{Has06}, every K\"ahler solvmanifold has the structure of a quotient of a complex torus by a finite group of holomorphic isometries. In particular, $(M,J)$ inherits the invariant Hermitian flat metric $g$ of the complex torus covering it. Theorem \ref{thm:curv-kahl-non-neg-curv} then immediately applies.
\end{proof}
\end{cor}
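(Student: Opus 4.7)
The plan is to reduce the statement to the K\"ahler flat case, where Corollary \ref{cor:kahler-flat} (itself an immediate consequence of Theorem \ref{thm:curv-kahl-non-neg-curv}) already applies. The bridge is Hasegawa's structure theorem \cite{Has06}, which I would invoke as a black box: every K\"ahler solvmanifold $(M,J)$ is biholomorphic to a finite quotient $\mathbb{T}^n_{\C}/\Gamma$, where $\mathbb{T}^n_{\C}$ is a complex torus and $\Gamma$ is a finite group acting freely and holomorphically by isometries with respect to the standard flat invariant K\"ahler metric $g_0$ on $\mathbb{T}^n_{\C}$.

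First I would fix $g_0$ and note that, since $\Gamma$ acts by holomorphic isometries, $g_0$ is $\Gamma$-invariant and descends to a well-defined Riemannian metric $g$ on the quotient $M$. Because $\Gamma$ commutes with $J$, the quotient metric $g$ is Hermitian with fundamental form $\omega$ obtained by pushing down the flat K\"ahler form of $\mathbb{T}^n_{\C}$; in particular $d\omega=0$, so $g$ is K\"ahler on $(M,J)$. Flatness of $g$ follows at once because the quotient map is a local isometry and flatness is a purely local condition, so the curvature operator of $g$ vanishes identically, and in particular is nonnegative.

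At this point the conclusion is immediate: Theorem \ref{thm:curv-kahl-non-neg-curv} applies to $(M,J,g)$ and yields that $g$ is geometrically formal, from which strong formality of $(M,J)$ follows via Proposition \ref{prop:0} (geometric formality of a K\"ahler metric implies geometric Aeppli formality, which in turn implies strong formality by (vii) in the list following Diagram \ref{diag:metrics}). There is no real obstacle to overcome: both the structural input (Hasegawa) and the curvature criterion (Gallot--Meyer, packaged in Theorem \ref{thm:curv-kahl-non-neg-curv}) are ready-made. The only mild check is verifying that the invariant metric genuinely descends as a \emph{K\"ahler} metric, but this is automatic from $\Gamma$-invariance of $g_0$, $J$, and hence $\omega$.
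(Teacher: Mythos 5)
Your proposal is correct and follows exactly the paper's route: invoke Hasegawa's structure theorem to realize $(M,J)$ as a finite free holomorphic isometric quotient of a flat complex torus, let the flat K\"ahler metric descend, and apply Theorem \ref{thm:curv-kahl-non-neg-curv}. The extra verifications you spell out (that the metric descends as a K\"ahler metric and that flatness is local) are implicit in the paper's one-line argument, so there is no substantive difference.
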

\begin{rmk}
For the proof of the Gallot-Meyer theorem, it actually suffices to assume that the Weitzenb\"och curvature operators are nonnegative. It would be interesting to investigate whether this weaker curvature assumption allows for a generalization of Theorem \ref{thm:curv-kahl-non-neg-curv} to a larger class of manifolds. 
\end{rmk}

\section{Dolbeault  and Bott-Chern formality of complex solvable manifolds}

Let $M=\Gamma\backslash G$ be a complex parallelisable manifold, i.e., the compact quotient of a simply connected complex Lie group $G$ by a discrete subgroup $\Gamma$ \cite{Wan54}. Following the notation of Nakamura \cite{Nak75}, if the complex Lie group $G$ is solvable (respectively, nilpotent), we call $M$ a \emph{complex
solvable (respectively, nilpotent) manifold}. 

In \cite{CFGU}, it is proved that some examples of complex nilpotent manifolds are not Dolbeault formal. We give a proof here for every complex nilpotent manifold.

\begin{thm}\label{thm:dol:cnm}
Complex nilpotent manifolds are not Dolbeault formal, unless they are complex tori.
\begin{proof}
From \cite[§2.7]{AK17}, the Dolbeault minimal model of a complex nilpotent manifold $M=\Gamma\backslash G$ is given by
\begin{gather}\label{eq:dolb_nil_par}
(\textstyle\bigwedge^{\bullet}(\mathfrak{g}_+^*),\dbar\equiv 0)\otimes_\C (\bigwedge^{\bullet}(\mathfrak{g}_-^*),\dbar)\hookrightarrow (\mathcal{A}^{\bullet,\bullet}(M),\dbar)\\
\textstyle\bigwedge^{\bullet}(\mathfrak{g}_+^*)\otimes_\C H_{\dbar}^{\bullet}(\bigwedge^\bullet(\mathfrak{g}_-^*))\cong H_{\dbar}^{\bullet,\bullet}(M),\nonumber
\end{gather}
where $\mathfrak{g}=\mathfrak{g}_+\oplus \mathfrak{g}_-$ is the decomposition of the Lie algebra of $G$ in terms of the spaces of holomorphic and anti-holomorphic left-invariant vector fields on $G$.

By definition of complex parallelisable manifolds, it holds that $\mathfrak{g}_-$ is a complex nilpotent Lie algebra. Hence,  unless $\mathfrak{g}$ is abelian, the algebra $(\bigwedge^{\bullet}(\mathfrak{g}_-^*),\dbar)$ is not Sullivan formal \cite{BG88,Has89} and, via the isomorphism
$$
\textstyle(\bigwedge^{\bullet} (\mathfrak{g}_-^*),\dbar)\cong (\bigwedge^{0,\bullet}(\mathfrak{g}_\C^*),\dbar),
$$
the algebra $(\bigwedge^{\bullet}(\mathfrak{g_-^*}),\dbar)$ is also not Dolbeault formal.

From \eqref{eq:dolb_nil_par}, we obtain that the Dolbeault model of $M$ is the tensor product of a Dolbeault formal algebra, i.e., $(\bigwedge^\bullet(\mathfrak{g}_+),\dbar\equiv 0)\cong(\bigwedge^{\bullet,0}(\mathfrak{g}_\C^*),\dbar\equiv 0)$, and a non-formal one, i.e., $(\bigwedge^{\bullet}(\mathfrak{g_-^*}),\dbar)$, unless $M$ is a complex torus. As a result, the manifold  $M$ is not Dolbeault formal, unless it is a complex torus.
\end{proof}
\end{thm}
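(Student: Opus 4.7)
The plan is to reduce Dolbeault formality of a complex nilpotent manifold $M = \Gamma\backslash G$ to the Sullivan formality of the Chevalley–Eilenberg complex attached to the anti-holomorphic part $\mathfrak{g}_-$ of the complexified Lie algebra of $G$, and then to invoke the classical Benson–Gordon/Hasegawa obstruction.

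First, I would recall that by \cite{AK17}, any complex parallelisable manifold admits a finite-dimensional Dolbeault model
\[
\textstyle (\bigwedge^{\bullet}(\mathfrak{g}_+^*),\dbar\equiv 0)\otimes_\C (\bigwedge^{\bullet}(\mathfrak{g}_-^*),\dbar)\hookrightarrow (\cala^{\bullet,\bullet}(M),\dbar),
\]
in which the inclusion is a quasi-isomorphism of bidifferential bigraded algebras. Thus, Dolbeault formality of $M$ is equivalent to Dolbeault formality of this left-hand bigraded algebra.

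Next, I would identify the non-formal factor. The first tensor factor has trivial differential, so it is trivially Dolbeault formal. Under the canonical identification $(\bigwedge^{\bullet}(\mathfrak{g}_-^*),\dbar)\cong (\bigwedge^{0,\bullet}(\mathfrak{g}_\C^*),\dbar)$, the second factor coincides (up to regrading) with the Chevalley–Eilenberg complex of the complex nilpotent Lie algebra $\mathfrak{g}_-$. By the Benson–Gordon/Hasegawa theorem \cite{BG88,Has89}, such a Chevalley–Eilenberg CDGA is formal if and only if the Lie algebra is abelian. Hence, unless $\mathfrak{g}_-$ is abelian — equivalently, unless $\mathfrak{g}$ is abelian and $M$ is a complex torus — the algebra $(\bigwedge^{\bullet}(\mathfrak{g}_-^*),\dbar)$ fails to be Dolbeault formal. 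Concretely, the obstruction takes the form of a non-vanishing Dolbeault–Massey triple product built from the structure constants of $\mathfrak{g}_-$.

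Finally, I would need to verify that Dolbeault non-formality is preserved under tensoring with a formal factor with zero differential. This follows from the multiplicativity of Dolbeault minimal models in the Neisendorfer–Taylor sense: the minimal model of the tensor product is the tensor product of the minimal models, and a formality quasi-isomorphism on the tensor product would restrict, via a Künneth-type argument and the triviality of the first factor, to a formality quasi-isomorphism for the second factor. Equivalently, one argues that the non-trivial Dolbeault–Massey product in $(\bigwedge^{\bullet}(\mathfrak{g}_-^*),\dbar)$ survives in the tensor product, since the extra factor $(\bigwedge^{\bullet}(\mathfrak{g}_+^*),\dbar\equiv 0)$ contributes no new coboundaries.

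\emph{Main obstacle.} The conceptual content — non-formality of nilpotent Chevalley–Eilenberg algebras — is classical, and the reduction to the finite-dimensional model is standard given \cite{AK17}. The only point requiring care is the tensor-product step in the bigraded Dolbeault setting: ensuring that a Massey product in one factor is not accidentally killed after tensoring, and that Neisendorfer–Taylor formality is stable under such tensor decompositions. This is, however, a routine adaptation of the Sullivan case once one exploits that the formal factor is concentrated in bidegrees $(\bullet,0)$ with vanishing differential.
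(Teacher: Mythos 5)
Your proposal follows essentially the same route as the paper: the finite-dimensional Dolbeault model from \cite{AK17}, the identification of the second tensor factor with the Chevalley--Eilenberg complex of the nilpotent Lie algebra $\mathfrak{g}_-$, and the Benson--Gordon/Hasegawa non-formality criterion. Your extra care about why non-formality survives tensoring with the trivial-differential factor $(\bigwedge^{\bullet}(\mathfrak{g}_+^*),\dbar\equiv 0)$ is a point the paper treats as immediate, and your justification of it is sound.
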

Since weak formality obstructs the existence of geometrically Dolbeault metrics \cite[Proposition 2.1]{TomTor14}, we immediately obtain the following.
\begin{cor}\label{cor:no-geom-dolb-cn}
Complex nilpotent manifolds are not geometrically Dolbeault formal, unless they are complex tori.
\end{cor}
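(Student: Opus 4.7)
The corollary is essentially a formal consequence of Theorem \ref{thm:dol:cnm} once it is paired with the implication ``geometric Dolbeault formality $\Longrightarrow$ Dolbeault formality'' recorded as item (ii) after Diagram \ref{diag:metrics} (established in \cite[Proposition 2.1]{TomTor14} via the vanishing of Dolbeault Massey products). The plan is therefore a one-line contrapositive argument, with no new technical content beyond what has already been produced in the previous theorem.

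Explicitly, suppose a complex nilpotent manifold $M=\Gamma\backslash G$ admits a geometrically Dolbeault formal Hermitian metric $g$. Then, by the implication cited above, the bigraded differential algebra $(\mathcal{A}^{\bullet,\bullet}(M),\dbar)$ is Dolbeault formal in the Neisendorfer-Taylor sense. By Theorem \ref{thm:dol:cnm}, this forces $M$ to be a complex torus. Contrapositively, unless $M$ is a complex torus, it admits no geometrically Dolbeault formal metric.

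There is no technical hurdle beyond the ones already contained in the proof of Theorem \ref{thm:dol:cnm}, where the actual work happens: one splits the Dolbeault minimal model of $M$ as a tensor product $(\bigwedge^\bullet \mathfrak{g}_+^*, 0) \otimes_\C (\bigwedge^\bullet \mathfrak{g}_-^*, \dbar)$, observes that the second factor is (isomorphic to) the Chevalley-Eilenberg complex of the nilpotent complex Lie algebra $\mathfrak{g}_-$, and then invokes the Hasegawa-style obstruction \cite{BG88,Has89} to rule out formality unless $\mathfrak{g}$ is abelian. The only subtlety worth double-checking is that the notion of Dolbeault formality invoked in \cite[Proposition 2.1]{TomTor14} matches the Neisendorfer-Taylor one adopted in Section \ref{sec:not} and in Theorem \ref{thm:dol:cnm}; this is indeed the convention throughout the paper, so the two statements chain without friction.
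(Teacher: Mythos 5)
Your proposal is correct and is exactly the paper's argument: the corollary is deduced in one line from Theorem \ref{thm:dol:cnm} together with the implication that geometric Dolbeault formality forces Dolbeault formality (\cite[Proposition 2.1]{TomTor14}, item (ii) after Diagram \ref{diag:metrics}). Your remark on matching the Neisendorfer--Taylor convention is a sensible sanity check but introduces no divergence from the paper's route.
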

\begin{rmk}\label{rmk:geom-dolb-cs}
If $M$ is a complex solvable manifold, it may admit a geometrically Dolbeault formal metric, even if it is not a complex torus. Indeed, the complex parallelisable Nakamura manifold admits a lattice \cite[§5.2, $b,c\notin \pi\Z$]{Kas} such that the diagonal invariant Hermitian metric is geometrically Dolbeault formal.
\end{rmk}

We focus now on geometrically Bott-Chern formality and weak formality of complex parallelisable manifolds.
\begin{thm}\label{thm:geom-bott-cpm}
Complex parallelisable manifolds are not geometrically Bott-Chern formal metrics, unless they are complex tori.
\begin{proof}
Let $M=\Gamma\backslash G$ be a complex parallelizable manifolds. By definition, $M$ admits a global coframe of $G$-invariant holomorphic $(1,0)$-forms, see \cite{Wan54}. If we suppose that there exists a geometrically Bott-Chern formal metric on $M$, then every holomorphic $(1,0)$-form of this coframe is closed by Proposition \ref{prop:1}, i.e., $G$ is abelian and $M$ is a complex torus.  
\end{proof}
\end{thm}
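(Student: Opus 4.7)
The plan is to apply Proposition \ref{prop:1} directly to the invariant holomorphic coframe furnished by Wang's theorem. Let $M = \Gamma\backslash G$ be a complex parallelisable manifold, where $G$ is a simply connected complex Lie group of complex dimension $n$ and $\Gamma \subset G$ is a cocompact discrete subgroup. By \cite{Wan54}, $M$ admits a global coframe $\{\varphi^1,\dots,\varphi^n\}$ of left-invariant holomorphic $(1,0)$-forms. In particular, each $\varphi^i$ is $\dbar$-closed and satisfies the Maurer-Cartan type equations
\begin{equation*}
d\varphi^k = -\tfrac{1}{2}\sum_{i,j} c^k_{ij}\,\varphi^i\wedge\varphi^j,
\end{equation*}
where the $c^k_{ij}$ are the complex structure constants of the Lie algebra $\mathfrak{g}$ of $G$.

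The key step is to suppose that $M$ admits a geometrically Bott-Chern formal metric. Then by Proposition \ref{prop:1}, every holomorphic $p$-form on $M$ is $d$-closed; applied to $p=1$ and to each $\varphi^k$, this gives $d\varphi^k = 0$ for every $k$. Comparing with the structure equations above forces $c^k_{ij} = 0$ for all $i,j,k$, i.e., $\mathfrak{g}$ is abelian as a complex Lie algebra.

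Consequently $G$ is a simply connected abelian complex Lie group, hence biholomorphic to $\mathbb{C}^n$, and $M = \Gamma\backslash\mathbb{C}^n$ is a complex torus. Conversely, complex tori are trivially geometrically Bott-Chern formal (for instance with a flat Hermitian metric), proving the dichotomy.

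There is essentially no obstacle here once Proposition \ref{prop:1} is in hand; the work is frontloaded into that proposition. The only point worth emphasising is that we only need the $(1,0)$-case of Proposition \ref{prop:1}, and that it is the invariance (rather than merely the holomorphicity) of the coframe that allows us to read off the full Lie algebraic structure of $G$ and conclude that $G$ is abelian.
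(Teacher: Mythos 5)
Your proposal is correct and follows exactly the paper's argument: invoke Wang's invariant holomorphic coframe, apply Proposition \ref{prop:1} to conclude each $\varphi^k$ is $d$-closed, read off from the Maurer--Cartan equations that the structure constants vanish, and conclude $G$ is abelian and $M$ a complex torus. The only difference is that you spell out the structure equations explicitly, which the paper leaves implicit.
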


In \cite{Nak75}, Nakamura gave a characterization of complex solvable manifolds up to complex dimension $5$. For the complex Lie groups appearing in the list for which the existence of a lattice is clear, we prove the following.
\begin{prop}\label{prop:ABC-Massey-cs}
Let $M$ be a complex solvable manifold of either type $III.2-3$,  $IV.2-4,6$ or $V.2-10,12,15,17$. Then $M$ admits a non-vanishing triple ABC-Massey product.
\begin{proof}
We construct a non-vanishing ABC-Massey product for each case. Throughout the proof, we will always assume that the Hermitian metric on $M$ is the invariant metric on $M$ for which the fixed coframe of $(1,0)$-forms is orthonormal.

We briefly recap the strategy. We first construct a non-vanishing triple ABC-Massey product at the invariant level, i.e., on the Lie algebra $(\mathfrak{g},J)$ of $G$. Then, thanks to \cite[Lemma 2.1]{ST24}, we obtain a corresponding non-vanishing triple ABC-Massey product on $(M,J)$.

In particular, for each case, we have started by studying the space $$\im(\del\dbar|_{\bigwedge^{\bullet,\bullet}(\mathfrak{g})})=\im(\del|_{\bigwedge^{\bullet,0}})\otimes\im(\dbar|_{\bigwedge^{0,\bullet}(\mathfrak{g})}),$$ where equality is ensured by $d(\bigwedge^{1,0}(\mathfrak{g}))\subset \bigwedge^{2,0}(\mathfrak{g})$. It turns out that
$$
\im(\del|_{\bigwedge^{\bullet,0}(\mathfrak{g})})\subset\calh_{BC}^{\bullet+1,0}(\mathfrak{g}), \qquad \im(\dbar|_{\bigwedge^{0,\bullet}(\mathfrak{g})})\subset\calh_{BC}^{0,\bullet+1}(\mathfrak{g})
$$
since $\im(\del\dbar\ast|_{\bigwedge^{\bullet,0}(\mathfrak{g})})=\im(\del\dbar\ast|_{\bigwedge^{0,\bullet}(\mathfrak{g})})=\{0\}$.

Thus, a natural choice of Bott-Chern cohomology classes for the construction of ABC-Massey products is  $[\del\sigma]\in H_{BC}^{|\sigma|+1,0}(\mathfrak{g})$ for a pure degree $\sigma\in\bigwedge^{\bullet,0}(\mathfrak{g})$, or its conjugate $[\dbar\overline{\sigma}]\in H_{BC}^{0,|\sigma|+1}$. In either case, $\del\sigma$ and $\dbar\overline{\sigma}$ are Bott-Chern harmonic.  We have then looked for a choice of three Bott-Chern harmonic forms $\del\alpha\in \calh_{BC}^{p,0}(\mathfrak{g})$, $\dbar\overline{\beta}\in \calh_{BC}^{0,s}(\mathfrak{g})$, and $\overline{\gamma}\in\calh_{BC}^{0,v}(\mathfrak{g})$, such that, up to constants, $f_{\alpha\beta}=\alpha\wedge \overline{\beta}$ and $$ \dbar\overline{\beta}\wedge\overline{\gamma}=0, \qquad \alpha\wedge\overline{\beta}\wedge\overline{\gamma}\neq 0,
$$
so that the following triple ABC-Massey product is well defined
\[
\langle[\del\alpha],[\dbar\overline{\beta}],[\overline{\gamma}]\rangle_{ABC}=[\alpha\wedge\overline{\beta}\wedge\overline{\gamma}]_{A}\in \frac{H_A^{p-1,s+v-1}(\mathfrak{g})}{[\overline{\gamma}]\cup H_A^{p-1,s-1}(\mathfrak{g})}.
\]
We then checked that $[\alpha\wedge\overline{\beta}\wedge\overline{\gamma}]_A\neq 0$, by showing that $\alpha\wedge\overline{\beta}\wedge\overline{\gamma}$ is Aeppli harmonic. Finally, we proved that, as a triple Massey product, $[\alpha\wedge\overline{\beta}\wedge\overline{\gamma}]_A\neq 0$, by first checking that $\overline{\gamma}\wedge \ast(\alpha\wedge\overline{\beta}\wedge \overline{\gamma})$ is $\del\dbar$-exact. 

Indeed, let $\{\xi_j\}_{j=1}^{h_A^{p-1,s-1}}$ be a basis of invariant Aeppli harmonic $(p-1,s-1)$-forms and assume by contradiction that $[\alpha\wedge\overline{\beta}\wedge\overline{\gamma}]_A\in [\overline{\gamma}]\cup H_A^{p-1,s-1}(\mathfrak{g})$, i.e.,
\[
\alpha\wedge\overline{\beta}\wedge\overline{\gamma}=\sum_{j=1}^{h_A^{p-1,s-1}}
\lambda_j\overline{\gamma}\wedge \xi_j+\del R+\dbar S, \qquad \textstyle R\in \bigwedge^{p-2,s+v-1}(\mathfrak{g}), S\in\bigwedge^{p-1,s+v-2}(\mathfrak{g}). 
\]
Integrating this equality over $M$ against $\ast(\alpha\wedge\overline{\beta}\wedge \overline{\gamma})$, we obtain
\begin{equation}\label{eq:thm_ABC-M_prod}
0\neq \int_M ||\alpha\wedge\overline{\beta}\wedge\overline{\gamma}||^2\text{vol}=\sum_{j=1}^{h_A^{p-1,s-1}}\lambda_j\int_M\overline{\gamma}\wedge \xi_j\wedge\ast(\alpha\wedge\overline{\beta}\wedge\overline{\gamma}).
\end{equation}
Since
$$
\overline{\gamma}\wedge\ast(\alpha\wedge\overline{\beta}\wedge\overline{\gamma})\wedge\xi_j=-\overline{\gamma}\wedge\ast(\alpha\wedge\overline{\beta}\wedge\overline{\gamma})\wedge\ast(\ast\xi_j)=-g(\overline{\gamma}\wedge\ast(\alpha\wedge\overline{\beta}\wedge\overline{\gamma}),\ast\xi_j)\text{vol}
$$ and 
\begin{align*} \ast\xi_j\in\calh_{BC}^{n-p+1,n-s+1}(\mathfrak{g}), 
\end{align*}
if we are able to prove that
\begin{align*}
\textstyle\gamma\wedge\ast(\alpha\wedge\overline{\beta}\wedge\overline{\gamma})\in\im(\del\dbar|_{\bigwedge^{n-p+1,n-s+1}(\mathfrak{g})}),\quad
\end{align*}
by the orthogonal decomposition \eqref{eq:decomp_hodge:_bott}, we obtain that the last term of \eqref{eq:thm_ABC-M_prod} vanishes, which yields a contradiction. As a result, $[\alpha\wedge\overline{\beta}\wedge\overline{\gamma}]_A$ represents a non-vanishing triple ABC-Massey product on $\mathfrak{g}$.

Moreover, if $(M,J)=(M',J')\times(M'',J'')$ and either $(M',J')$ or $(M'',J'')$ admits a non-vanishing triple ABC-Massey product, so does $(M,J)$.

We refer to Appendix \ref{appendix} for the explicit description (up to signs and constants) of the triple ABC-Massey products for each case by listing structure equations, starting Bott-Chern cohomology classes and Aeppli cohomology representatives.
\end{proof}
\end{prop}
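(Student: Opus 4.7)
The plan is to construct, for each listed complex solvable manifold $M=\Gamma\backslash G$, an explicit nonvanishing triple ABC-Massey product by working at the invariant level on the Lie algebra $\mathfrak{g}$ and then transferring to $M$ via \cite[Lemma 2.1]{ST24}. Throughout I fix the invariant Hermitian metric for which the distinguished coframe of $(1,0)$-forms from Nakamura's classification is orthonormal; this reduces every Hodge-theoretic computation to finite-dimensional linear algebra on $\bigwedge^{\bullet,\bullet}\mathfrak{g}^*$. For decomposable cases $(\mathfrak{g},J)=(\mathfrak{g}',J')\oplus(\mathfrak{g}'',J'')$ it suffices to exhibit a nonvanishing Massey product on one of the factors, which trims the list of Lie algebras that must be treated directly.

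The key structural observation guiding the search for input classes is that, on any complex parallelisable Lie algebra, $d\bigl(\bigwedge^{1,0}\mathfrak{g}^*\bigr)\subset \bigwedge^{2,0}\mathfrak{g}^*$, whence $\im\bigl(\del\dbar|_{\bigwedge^{\bullet,\bullet}\mathfrak{g}^*}\bigr)=\im\del|_{\bigwedge^{\bullet,0}}\wedge\im\dbar|_{\bigwedge^{0,\bullet}}$. Moreover, invariant forms of pure bidegree $(\bullet,0)$ or $(0,\bullet)$ automatically satisfy $\del\dbar\ast\equiv 0$, so every form $\del\alpha$ with $\alpha\in\bigwedge^{p-1,0}\mathfrak{g}^*$, and every $\dbar\overline{\beta}$, is Bott-Chern harmonic. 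I therefore look for invariant $\alpha,\beta\in\bigwedge^{\bullet,0}\mathfrak{g}^*$ and a Bott-Chern harmonic $\overline{\gamma}\in\calh_{BC}^{0,v}(\mathfrak{g})$ satisfying $\dbar\overline{\beta}\wedge\overline{\gamma}=0$, so that $f_{\beta\gamma}$ can be taken to vanish while $f_{\alpha\beta}=\pm\alpha\wedge\overline{\beta}$ is automatic from $\del\dbar(\alpha\wedge\overline{\beta})=\pm\del\alpha\wedge\dbar\overline{\beta}$; simultaneously, I require $\alpha\wedge\overline{\beta}\wedge\overline{\gamma}$ to be nonzero and Aeppli harmonic. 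With such choices, the candidate ABC-Massey product is represented by $[\alpha\wedge\overline{\beta}\wedge\overline{\gamma}]_A$.

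The delicate step is showing that this class does not lie in the indeterminacy ideal $[\overline{\gamma}]\cup H_A^{p-1,s-1}(\mathfrak{g})$. I argue by contradiction: if
\[
\alpha\wedge\overline{\beta}\wedge\overline{\gamma}=\sum_j \lambda_j\,\overline{\gamma}\wedge\xi_j+\del R+\dbar S
\]
for some basis $\{\xi_j\}$ of invariant Aeppli harmonic $(p-1,s-1)$-forms, then pairing both sides with $\ast(\alpha\wedge\overline{\beta}\wedge\overline{\gamma})$ and integrating kills the $\del R+\dbar S$ contribution (by Aeppli harmonicity of the candidate), leaving the positive quantity $\|\alpha\wedge\overline{\beta}\wedge\overline{\gamma}\|^2$ equal to a sum of integrals $\int \overline{\gamma}\wedge\xi_j\wedge\ast(\alpha\wedge\overline{\beta}\wedge\overline{\gamma})$. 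Since $\ast\xi_j\in\calh_{BC}^{n-p+1,n-s+1}(\mathfrak{g})$, the orthogonal Bott-Chern Hodge decomposition \eqref{eq:decomp_hodge:_bott} forces each of these integrals to vanish provided $\overline{\gamma}\wedge\ast(\alpha\wedge\overline{\beta}\wedge\overline{\gamma})\in\im(\del\dbar)$, which yields the desired contradiction.

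I expect the main obstacle to be precisely this last inclusion: for each of the Nakamura cases one has to exhibit an explicit $\del\dbar$-primitive of $\overline{\gamma}\wedge\ast(\alpha\wedge\overline{\beta}\wedge\overline{\gamma})$, which requires computing the Hodge star and the $\del\dbar$-action in coordinates adapted to the specific structure equations. The auxiliary checks (Aeppli harmonicity and nontriviality of the representative, vanishing of $\dbar\overline{\beta}\wedge\overline{\gamma}$) are short bookkeeping exercises, but the collection of them across types III.2--3, IV.2--4,6 and V.2--10,12,15,17 still demands a case-by-case analysis; I would record the chosen data (structure equations, $[\del\alpha]$, $[\dbar\overline{\beta}]$, $[\overline{\gamma}]$ and the Aeppli representative) in an appendix and reduce the number of cases by invoking the factorization remark whenever $(\mathfrak{g},J)$ is a product of smaller-dimensional pieces already treated.
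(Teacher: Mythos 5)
Your proposal follows essentially the same route as the paper's proof: the same reduction to the invariant level via \cite[Lemma 2.1]{ST24}, the same observation that $\im(\del\dbar)$ factors and that $\del\alpha$, $\dbar\overline{\beta}$ are automatically Bott-Chern harmonic, the same choice of input classes with $f_{\beta\gamma}=0$ and $f_{\alpha\beta}=\pm\alpha\wedge\overline{\beta}$, and the same integration-against-$\ast(\alpha\wedge\overline{\beta}\wedge\overline{\gamma})$ contradiction to rule out the indeterminacy, with the case-by-case data relegated to an appendix. The argument is correct as outlined; the remaining work is exactly the explicit verification per Lie algebra that the paper also defers to its appendix.
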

We then have immediately that
\begin{cor}
Let $(M,J)$ be a complex solvable manifold of either type $III.2-3$,  $IV.2-4,6$ or $V.2-10,12,15,17$. Then $(M,J)$ is not weakly formal.
\end{cor}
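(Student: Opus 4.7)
The corollary follows almost immediately from the combination of Proposition \ref{prop:ABC-Massey-cs} and the general obstruction relating weak formality to the vanishing of triple ABC-Massey products. My plan is therefore a one-step argument rather than a new construction.

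First, I invoke Proposition \ref{prop:ABC-Massey-cs}: for each of the listed types $III.2\text{--}3$, $IV.2\text{--}4, 6$, and $V.2\text{--}10, 12, 15, 17$, the manifold $(M,J)$ admits at least one non-vanishing triple ABC-Massey product $\langle [\alpha], [\beta], [\gamma] \rangle_{ABC}$ in the appropriate quotient of Aeppli cohomology. This is precisely the content of the proposition and requires no further verification here.

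Second, I apply property (iii) from the list of implications established in Section \ref{sec:not}, which asserts that weak formality implies the vanishing of every triple ABC-Massey product (this is the content of \cite{AT15,MS24} as cited). Combining these two facts yields the contradiction: if $(M,J)$ were weakly formal, then all triple ABC-Massey products on $(M,J)$ would vanish, contradicting Proposition \ref{prop:ABC-Massey-cs}. Hence $(M,J)$ is not weakly formal.

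There is essentially no obstacle to carry out: the hard work has already been done in the proof of Proposition \ref{prop:ABC-Massey-cs}, where the explicit Massey products are constructed case by case using the structure equations tabulated in Appendix \ref{appendix}. The corollary is merely the natural consequence of packaging that proposition together with the Massey-product obstruction to weak formality. The only point worth emphasizing in the write-up is that the non-vanishing of the invariant Massey product on the Lie algebra $\mathfrak{g}$ descends to a non-vanishing Massey product on $M=\Gamma\backslash G$ via \cite[Lemma 2.1]{ST24}, as already used inside the proof of Proposition \ref{prop:ABC-Massey-cs}.
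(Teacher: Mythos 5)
Your proposal is correct and matches the paper's (implicit) argument exactly: the paper derives this corollary immediately from Proposition \ref{prop:ABC-Massey-cs} together with the fact, recorded as item (iii) in Section \ref{sec:not}, that weak formality forces all triple ABC-Massey products to vanish. Nothing further is needed.
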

As a result, we are led to formulate the natural following
\begin{question}
Does every complex solvable manifold admit non-vanishing triple ABC-Massey products?
\end{question}

\section{Geometric Dolbeault and Bott-Chern formality of Calabi-Eckmann manifolds}\label{sec:CE}
Let $M_{u,v}$ be a Calabi-Eckmann manifold of complex dimension $u+v+1$, with $0\leq u\leq v$. In particular, $M_{u,v}$ is diffeomorphic to $S^{2u+1}\times S^{2v+1}$ and it is the total space of the holomorphic fibration
\[
\begin{tikzcd}
S^1\times S^1\ar[r] & S^{2u+1}\times S^{2v+1}\ar[d,"\pi"]\\
& \mathbb{CP}^u\times \mathbb{CP}^v,
\end{tikzcd}
\]
where each factor $S^1$ is the fiber of the respective $S^1$-fibration over the projective space. By construction, see e.g., \cite{Kob63}, there exists two connection $1$-forms $\eta_u$ and $\eta_v$ along the fibers such that $d\eta_u=\pi^*\omega_{FS,u}$ and $d\eta_v=\pi^*\omega_{FS,v}$. The Calabi-Eckmann complex structure $J$ on \begin{equation}\label{eq:splitting_CE}
T^*M_{u,v}\cong T^*(\pi^*(\mathbb{CP}^u\times \mathbb{CP}^v))\oplus T^*(S^1\times S^1)
\end{equation}
is just the pullback of the product complex structure on $T^*(\mathbb{CP}^u\times \mathbb{CP}^v)$ and it acts on the vertical components as $J\eta_v=\eta_u$. We can then set $\phi:=\eta_u-i\eta_v\in\mathcal{A}^{1,0}(M_{u,v})$ and, for the sake of brevity, we set $\pi^*\omega_{FS,u}=:\omega_1$ and $\pi^*\omega_{FS,v}=:\omega_2$. The fundamental form $\Omega$ of the Hermitian metric $g$ on $M_{u,v}$ for which the decomposition \eqref{eq:splitting_CE} is orthogonal and $\phi$ has norm $1$, is given by
\begin{equation*}
\Omega:=\omega_1+\omega_2+\frac{i}{2}\phi\wedge\overline{\phi}.
\end{equation*}
The volume form associated to $\Omega$ is then
\[
\frac{\Omega^{u+v+1}}{(u+v+1)!}=\frac{i}{2}\omega_1^u\wedge\omega_2^v\wedge\phi\wedge\overline{\phi}.
\]
We compute
\[
d\phi=d(\eta_u-i\eta_v)=\omega_1-i\omega_2\in\mathcal{A}^{1,1}(M_{u,v}),
\]
so that, in particular, $\del\phi=0$ and $d\phi=\dbar \phi$. Furthermore
\begin{equation}\label{eq:ddc_CE_phiphibar}
\del\dbar(\phi\wedge\overline
{\phi})=\dbar\phi\wedge\del\overline{\phi}=(\omega_1-i\omega_2)\wedge(\omega_1+i\omega_2)=\omega_1^2+\omega_2^2,
\end{equation}
which vanishes for $(u,v)\in\{(0,0),(0,1),(1,1)\}$, corresponding to the Calabi-Eckmann manifolds diffeomorphic to, respectively, $\mathbb{S}^1\times\mathbb{S}^1$, $\mathbb{S}^1\times \mathbb{S}^3$, and $\mathbb{S}^3\times \mathbb{S}^3$.

In these cases, it is known that there exist geometrically Bott-Chern formal metrics (see, e.g., \cite{AT15,TT17}) and these manifolds are weakly formal \cite{MS24}. More precisely, the Bott-Chern cohomology spaces of $M_{0,1}$ and $M_{1,1}$ are given in our terminology by, respectively, 

\begin{align*}
H_{BC}^{1,1}(M_{0,1})&=\C\langle[\omega_1]\rangle\\
H_{BC}^{2,1}(M_{0,1})&=\C\langle[\omega_1\wedge \phi]\rangle\\
H_{BC}^{1,2}(M_{0,1})&=\C\langle[\omega_1\wedge\overline{\phi}]\rangle\\
H_{BC}^{2,2}(M_{0,1})&=\C\langle[\omega_1\wedge\phi\wedge\overline{\phi}]\rangle,
\end{align*}

visually represented as 
\begin{equation*}
\begin{tikzcd}[sep=small]
 &&1&&\\
 &0&&0&\\
0&&\omega_1&&0\\
&\omega_1\phi&&\omega_1\overline{\phi}&\\
&&\omega_1\phi\overline{\phi}&&
\end{tikzcd}
\end{equation*}

and

\begin{align*}
H_{BC}^{1,1}(M_{1,1})&=\C \langle[\omega_1],[\omega_2]\rangle\\
H_{BC}^{2,1}(M_{1,1})&=\C \langle[(\omega_1+i\omega_2)\wedge \phi]\rangle\\
H_{BC}^{1,2}(M_{1,1})&=\C \langle[(\omega_1-i\omega_2)\wedge\overline{\phi}]\rangle\\
H_{BC}^{2,2}(M_{1,1})&=\C \langle[\omega_1\wedge\omega_2]\rangle\\
H_{BC}^{3,2}(M_{1,1})&=\C \langle[\omega_1\wedge\omega_2\wedge \phi]\rangle\\
H_{BC}^{2,3}(M_{1,1})&=\C \langle[\omega_1\wedge\omega_2\wedge\overline{\phi}]\rangle\\
H_{BC}^{3,3}(M_{1,1})&=\C\langle[\omega_1\wedge\omega_2\wedge\phi\wedge\overline{\phi}]\rangle,
\end{align*}

visually represented as
\begin{equation*}
\begin{tikzcd}[sep=small]
 &&&1&&&\\
 &&0&&0&&\\
&0&&\omega_1,\omega_2&&0&\\
0&&(\omega_1+i\omega_2)\phi&&(\omega_1-i\omega_2)\overline{\phi}&&0\\
&0&&\omega_1\omega_2&&0&\\
&&\omega_1\omega_2\phi&&\omega_1\omega_2\overline{\phi}&&\\
&&&\omega_1\omega_2\phi\overline{\phi}&&&
\end{tikzcd}
\end{equation*}
\vspace{0.2cm}

where we have listed the Bott-Chern armonic representative of each cohomology class with respect to the standard metric $\Omega$. By a straightforward computation, it is immediate to see that both $(\calh_{BC}^{\bullet,\bullet}(M_{0,1}),\wedge)$ and $(\calh_{BC}^{\bullet,\bullet}(M_{1,1}),\wedge)$ are algebras, i.e., the standard metric $\Omega$ is geometrically Bott-Chern formal.

We focus then on the other cases, i.e., from now on we assume that $v\geq 2$.

Beforehand, we prove the following useful lemma, which ensures that there exists a model for computing the Bott-Chern cohomology of every Calabi-Eckmann manifold $M_{u,v}$, $u\leq v$.

\begin{lemma}\label{lem:model-BC}
Let $M_{u,v}$ be a Calabi-Eckmann manifold. Then, the injection of algebras
\begin{align*}
\cala^{\bullet,\bullet}&:=\textstyle\bigwedge^{\bullet,\bullet}\C\langle\phi,\overline{\phi},\omega_1,\omega_2\rangle \hookrightarrow \textstyle\bigwedge^{\bullet,\bullet}(M_{u,v})
\end{align*}
with
\begin{gather*}
|\phi|=(1,0),\quad |\overline{\phi}|=(0,1),\quad |\omega_1|=(1,1),\quad  |\omega_2|=(1,1),\\
\dbar\phi=\omega_1-i\omega_2,\quad \del\overline{\phi}=\omega_1+i\omega_2,
\end{gather*}
is a weak-equivalence.
\begin{proof}
From, e.g., \cite{MS24}, if the $E_1$-isomorphism given by a model for the Dolbeault cohomology of a compact complex manifold preserves the real structure, then it induces a weak equivalence, i.e., an isomorphism also at the level of Bott-Chern and Aeppli cohomology.
Hence, it suffices to show that there exists a weak equivalence between $\cala^{\bullet,\bullet}$ and the model of Dolbeault cohomology of any Calabi-Eckmann manifold $M_{u,v}$, which is given \cite{Tan94} by
\begin{align*}
(\textstyle\bigwedge^{\bullet,\bullet}\langle a,b,a',b',\alpha,\beta\rangle,\del,\dbar),
\end{align*}
where
\[
|a|=|a'|=(1,1),\, |b|=(u-1,u),\,|b'|=(u-1,u),\,|\alpha|=(1,0), \,|\beta|=(0,1),
\]
and
\begin{align*}
&\dbar a=\del a=\dbar a'=\del a'=0, \quad \dbar b=a^{u+1},\quad  \dbar b'=a'^{u+1},\\
&\del\alpha=0, \quad \dbar\alpha=a-ia',  \quad \del\beta=a+ia',  \quad \dbar\beta=0.
\end{align*}
However, it is immediate to see that the map defined by
\[
a\mapsto\omega_1, \quad b\mapsto 0, \quad a'\mapsto \omega_2, \quad b'\mapsto0, \quad \alpha\mapsto \phi, \quad \beta\mapsto \overline{\phi}
\]
gives the desired weak equivalence.
\end{proof}
\end{lemma}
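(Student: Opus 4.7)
The plan is to reduce the weak-equivalence claim to a Dolbeault ($E_1$) statement, via the general principle recorded in \cite{MS24} (and going back to Stelzig's structure theorem for double complexes): a morphism of real bidifferential bigraded algebras which induces an isomorphism on Dolbeault cohomology automatically induces isomorphisms on Bott--Chern and Aeppli cohomology, hence is a weak equivalence. Accordingly, I would split the argument into two essentially independent steps: first, check that $\iota\colon \mathcal{A}^{\bullet,\bullet}\hookrightarrow \bigwedge^{\bullet,\bullet}(M_{u,v})$ is a morphism of real bidifferential bigraded algebras; second, verify that $\iota$ induces an isomorphism on Dolbeault cohomology.

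The first step is essentially a direct computation using what was already established in the preceding paragraphs of this section. The identities $d\phi = \dbar\phi = \omega_1 - i\omega_2$ give $\del\phi = 0$, and by conjugation $\del\overline{\phi} = \omega_1 + i\omega_2$, $\dbar\overline{\phi} = 0$. Since $\omega_1 = \pi^*\omega_{FS,u}$ and $\omega_2 = \pi^*\omega_{FS,v}$ are real and $d$-closed, the subalgebra $\mathcal{A}^{\bullet,\bullet}$ is closed under $\del$, $\dbar$, and complex conjugation, so $\iota$ is a real morphism of bidifferential bigraded algebras.

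For the Dolbeault comparison I would invoke the bigraded Dolbeault model of $M_{u,v}$ from \cite{Tan94}, namely $\bigl(\textstyle\bigwedge^{\bullet,\bullet}\langle a, b, a', b', \alpha, \beta\rangle, \del, \dbar\bigr)$, and define a morphism from it into $\mathcal{A}^{\bullet,\bullet}$ by $a \mapsto \omega_1$, $a' \mapsto \omega_2$, $\alpha \mapsto \phi$, $\beta \mapsto \overline{\phi}$, $b, b' \mapsto 0$. The differentials match term by term: $\dbar\alpha = a - ia'$ mirrors $\dbar\phi = \omega_1 - i\omega_2$, and similarly for conjugates, while the relations $\omega_1^{u+1} = 0 = \omega_2^{v+1}$ valid in $\mathcal{A}^{\bullet,\bullet}$ (because these forms are pullbacks from $\mathbb{CP}^u$ and $\mathbb{CP}^v$) take the place of the $\dbar$-primitives that $b$ and $b'$ provide in Tanré's model. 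Composing with $\iota$ recovers Tanré's Dolbeault quasi-isomorphism into $\bigwedge^{\bullet,\bullet}(M_{u,v})$, and so $\iota$ is an $E_1$-isomorphism; the weak equivalence then follows from the principle quoted above.

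The main obstacle I anticipate lies in making the last comparison rigorous, i.e.\ verifying that collapsing $b$ and $b'$ does not introduce spurious Dolbeault classes: one has to confirm that the bigraded ideal generated by $\omega_1^{u+1}$ and $\omega_2^{v+1}$ in the algebra on $\phi, \overline{\phi}, \omega_1, \omega_2$ accounts precisely for the cohomological role of $b$ and $b'$ in the Tanré model. This is a Leray--Hirsch/Künneth-type bookkeeping governed by the projection $\pi\colon M_{u,v} \to \mathbb{CP}^u \times \mathbb{CP}^v$ and the $\mathbb{S}^1 \times \mathbb{S}^1$-bundle structure, and is the only nontrivial combinatorial step in the argument.
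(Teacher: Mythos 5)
Your proposal follows the same route as the paper's proof: reduce the weak-equivalence claim to an $E_1$-isomorphism preserving the real structure via the principle from \cite{MS24}, then compare $\cala^{\bullet,\bullet}$ with Tanr\'e's bigraded Dolbeault model \cite{Tan94} using exactly the map $a\mapsto\omega_1$, $a'\mapsto\omega_2$, $\alpha\mapsto\phi$, $\beta\mapsto\overline{\phi}$, $b,b'\mapsto 0$. The only difference is that you make explicit the two verifications the paper leaves implicit (that $\iota$ is a real morphism of bidifferential bigraded algebras, and that collapsing $b,b'$ onto the relations $\omega_1^{u+1}=\omega_2^{v+1}=0$ is a Dolbeault quasi-isomorphism, which the paper dismisses as ``immediate''), so the two arguments coincide in substance.
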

\begin{rmk}
The proof of Lemma \ref{lem:model-BC} applies similarly to every model \cite{Tan} of Dolbeault for principal holomorphic torus fiber bundles.
\end{rmk}
Thanks to Lemma \ref{lem:model-BC}, we can complete the description \cite{Ste22} of the Bott-Chern numbers of any Calabi-Eckmann manifold, by studying the remaining case, i.e., $u=v$.
\begin{lemma}\label{lemma:BC-numbers-CE}
Let $M_{u,u}$ be the Calabi Eckmann manifold of diffeomorphism type $\mathbb{S}^{2u+1}\times\mathbb{S}^{2u+1}$. Then
\begin{equation}
    h_{BC}^{p,q}(M_{u,u})=\begin{cases} 
    2, \quad \text{if} \quad u\geq 1, \quad (p,q)\in\{(1,1),\dots, (u,u)\}\\
    1, \quad \text{if} \quad (p,q)\in\{(0,0),(2u+1,2u+1)\}\\
    \quad \,\,\,\,\, \text{or}\quad  (p,q)\in\{(u,u+1),\dots,(2u,2u+1)\}\\
    \quad \,\,\,\,\, \text{or} \quad (p,q)\in \{(u+1,u),\dots,(2u+1,2u)\}\\
    \quad \,\,\,\,\,\text{or}\quad  u\equiv_2 1, \,\,(p,q)=(u+1,u+1)\\
    0, \quad \text{else}.
    \end{cases}
\end{equation}
\begin{proof}
By straightforward computations via the model of Lemma \ref{lem:model-BC}, it is easy to check the assertions. We show the computation for $h_{BC}^{u+1,u+1}(M_{u,u})$. Let us consider an arbitrary $(u+1,u+1)$-form in $\mathcal{A}^{u+1,u+1}$, i.e., 
\[
\alpha:=\sum_{j=0}^{u-1}a_i\omega_1^{u-j}\wedge\omega_2^{j+1}+\sum_{j=0}^{u}b_i\phi\wedge\overline{\phi}\wedge\omega_1^{u-j}\wedge\omega_2^{j}, \qquad a_j,b_j\in\C.
\]
This form is $d$-closed if and only if $b_j=0$, for $j=0,\dots, u$. With respect to the standard Hermitian metric $\Omega:=\frac{i}{2}\phi\wedge\overline{\phi}+\omega_1+\omega_2$, we have that
\begin{align*}
\frac{2}{i}\del\dbar\ast\alpha&=
\del\dbar \left(\sum_{j=0}^{u-1}\overline{a_j}\phi\wedge\overline{\phi}\wedge\omega_1^{j}\wedge\omega_2^{u-j-1}\right)\\
&=\sum_{j=0}^{u-2}\overline{a_j}\omega_1^{j+2}\wedge\omega_2^{u-j-1}+\sum_{j=0}^{u-2}\overline{a_{j+1}}\omega_1^{j+1}\wedge\omega_2^{u-j}\\
&=\overline{a_{u-2}}\omega_1^u\wedge\omega_2+\sum_{j=0}^{u-3}(\overline{a_j}+\overline{a_{j+2}})\omega_1^{j+2}\wedge\omega_2^{u-j-1}+\overline{a_1}\omega_1\wedge\omega_2^{u}=0
\end{align*}
if and only if
\begin{align*}
    a_1=0, \quad a_{u-2}=0, \quad 
    a_{j}+a_{j+2}=0, \quad j\in\{0,\dots,u-3\}.
\end{align*}
From the first and third condition, we have that  to $a_j=0$, if $j\equiv_21$. For the indices $j\equiv_2 0$, we have two cases.

If $u\equiv_2 0$, then $a_j=0$, for every $j\in\{0,\dots, u-1\}$. Otherwise, if $u\equiv_2 1$, then the form
\[
\alpha=\sum_{j=0}^{(u-1)/2}(-1)^j\omega_1^{u-2j}\wedge\omega_2^{2j+1}
\]
is a non-vanishing Bott-Chern harmonic of degree $(u+1,u+1)$. Hence
\[
h_{BC}^{u+1,u+1}(M_{u,u})=\begin{cases}
    1, \,\text{if}\, u\equiv_2 1\\
    0, \,\text{if}\, u\equiv_2 0.
\end{cases}
\]
With similar computations, one can prove the remaining cases.
\end{proof}
\end{lemma}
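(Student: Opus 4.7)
The plan is to exploit the weak equivalence of Lemma \ref{lem:model-BC}, which lets us compute $H_{BC}^{\bullet,\bullet}(M_{u,u})$ directly from the finite-dimensional model $\cala^{\bullet,\bullet}:=\bigwedge^{\bullet,\bullet}\C\langle \phi,\overline{\phi},\omega_1,\omega_2\rangle$, endowed with $\dbar\phi=\omega_1-i\omega_2$, $\del\overline{\phi}=\omega_1+i\omega_2$, $d\omega_j=0$, and subject to the truncation relations $\omega_1^{u+1}=\omega_2^{u+1}=0$ inherited from the $\mathbb{CP}^u$-factors. A general monomial $\phi^a\overline{\phi}^b\omega_1^c\omega_2^d$ with $a,b\in\{0,1\}$ has bidegree $(a+c+d,b+c+d)$, so $\cala^{p,q}=0$ whenever $|p-q|\geq 2$, which already covers the bulk of the vanishing entries. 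Letting $V_k$ denote the degree-$k$ piece of the truncated ring $\C[\omega_1,\omega_2]/(\omega_1^{u+1},\omega_2^{u+1})$, the remaining bidegrees split as
\[
\cala^{p,p-1}=\phi V_{p-1},\qquad \cala^{p-1,p}=\overline{\phi}V_{p-1},\qquad \cala^{p,p}=V_p\oplus \phi\overline{\phi}V_{p-1},
\]
and the differentials act only through their action on $\phi$ and $\overline{\phi}$.

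The first step is to treat the off-diagonal bidegree $(p,p-1)$, with $(p-1,p)$ following by complex conjugation. On $\phi V_{p-1}$ one has $\del(\phi f)=0$ and $\dbar(\phi f)=(\omega_1-i\omega_2)f$, and the image of $\del\dbar$ into this bidegree is trivially zero, so $H_{BC}^{p,p-1}\cong\ker\bigl((\omega_1-i\omega_2)\colon V_{p-1}\to V_p\bigr)$. Writing $f=\sum_c a_c\omega_1^c\omega_2^{p-1-c}$, the kernel condition becomes the recursion $a_{k-1}=ia_k$, one equation for each monomial surviving in $V_p$. For $p\leq u$ no truncation occurs, both boundary indices are free, and the recursion collapses to $f=0$; for $u+1\leq p\leq 2u+1$ the relations $\omega_1^{u+1}=\omega_2^{u+1}=0$ erase the two boundary equations and the chain is determined by a single free parameter, giving a one-dimensional kernel. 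This yields the predicted values $0$ and $1$ in this range.

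The second step handles the diagonal $p=q$. Writing an element of $\cala^{p,p}$ as $f+\phi\overline{\phi}g$ with $f\in V_p$ and $g\in V_{p-1}$, a direct calculation gives
\[
\del(f+\phi\overline{\phi}g)=-\phi(\omega_1+i\omega_2)g,\qquad \dbar(f+\phi\overline{\phi}g)=\overline{\phi}(\omega_1-i\omega_2)g,
\]
so simultaneous closure under $\del$ and $\dbar$ is equivalent to $\omega_1g=\omega_2g=0$ in $V_{p+1}$. This forces $g=0$ unless $p=2u+1$, where only the isolated top class $\phi\overline{\phi}\omega_1^u\omega_2^u$ survives, and it is automatically outside the image of $\del\dbar$. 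For $p\leq 2u$ the computation reduces to $H_{BC}^{p,p}\cong V_p/(\omega_1^2+\omega_2^2)V_{p-2}$, using that $\del\dbar(\phi\overline{\phi}h)=(\omega_1^2+\omega_2^2)h$. To read off the quotient I change coordinates to $\Omega_\pm:=\omega_1\pm i\omega_2$, so that $\omega_1^2+\omega_2^2=\Omega_+\Omega_-$; in $\C[\Omega_+,\Omega_-]/(\Omega_+\Omega_-)$ the only monomials surviving in each degree are the pure powers $\Omega_+^p,\Omega_-^p$, and the truncations become $(\Omega_++\Omega_-)^{u+1}=0$ and $(\Omega_+-\Omega_-)^{u+1}=0$, which after killing cross terms reduce respectively to $\Omega_+^{u+1}+\Omega_-^{u+1}=0$ and $\Omega_+^{u+1}+(-1)^{u+1}\Omega_-^{u+1}=0$. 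When $u$ is even the two signs differ, forcing $\Omega_+^{u+1}=\Omega_-^{u+1}=0$; when $u$ is odd the two relations coincide and cut only one dimension at $p=u+1$. Extracting degrees then gives dimensions $1$ at $p=0$, $2$ at $p=1,\ldots,u$, the parity-sensitive value $0$ or $1$ at $p=u+1$ according as $u$ is even or odd, and $0$ at $p=u+2,\ldots,2u$; combined with the top class at $p=2u+1$ this reproduces the full formula.

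The main obstacle is the careful bookkeeping of the truncations $\omega_1^{u+1}=\omega_2^{u+1}=0$ near the middle and top bidegrees, since they reshape both the kernel of multiplication by $\omega_1-i\omega_2$ in the off-diagonal computation and the image of $\omega_1^2+\omega_2^2$ on the diagonal. The change of coordinates $\omega_1,\omega_2\rightsquigarrow\Omega_\pm$ is what makes everything transparent: it simultaneously diagonalises the differentials ($\dbar\phi=\Omega_-$, $\del\overline{\phi}=\Omega_+$) and the multiplication operator $\omega_1^2+\omega_2^2=\Omega_+\Omega_-$, turning the key quotient into a monomial problem in two variables whose parity dependence in $u$ is manifest.
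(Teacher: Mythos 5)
Your proposal is correct, and it reaches the stated table by a route that differs in a useful way from the paper's. Both arguments start from the weak equivalence of Lemma \ref{lem:model-BC} (and you correctly read the model as the subalgebra of forms on $M_{u,u}$, so that the truncations $\omega_1^{u+1}=\omega_2^{u+1}=0$ are in force). From there the paper computes \emph{harmonic} representatives with respect to the standard metric, solving $\del\alpha=\dbar\alpha=\del\dbar\ast\alpha=0$ by an explicit coefficient recursion, and it only carries this out in detail for the single bidegree $(u+1,u+1)$, leaving the rest to ``similar computations.'' You instead compute the Bott--Chern cohomology of the model purely algebraically as $(\ker\del\cap\ker\dbar)/\im(\del\dbar)$, never invoking the metric or the Hodge star: the bidegree constraint $|p-q|\leq 1$ disposes of most vanishing entries at once, the off-diagonal groups reduce to the kernel of multiplication by $\omega_1-i\omega_2$ on the truncated ring, and the diagonal groups reduce to $V_p/(\omega_1^2+\omega_2^2)V_{p-2}$ together with the isolated top class $\phi\wedge\overline{\phi}\wedge\omega_1^u\wedge\omega_2^u$. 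The substitution $\Omega_\pm=\omega_1\pm i\omega_2$, which turns $\omega_1^2+\omega_2^2$ into the monomial $\Omega_+\Omega_-$ and the two truncations into $\Omega_+^{u+1}+\Omega_-^{u+1}=0$ and $\Omega_+^{u+1}+(-1)^{u+1}\Omega_-^{u+1}=0$, is a genuine improvement in transparency: it makes the parity dichotomy at $(u+1,u+1)$ (the two relations coincide exactly when $u$ is odd) visible at a glance, and it treats all bidegrees uniformly rather than case by case. I checked the two computations agree at the one entry the paper works out, $h_{BC}^{u+1,u+1}$, where your dimension count $\dim S_{u+1}-\dim\bigl(I_{u+1}+\Omega_+\Omega_-S_{u-1}\bigr)$ reproduces the paper's answer ($1$ for $u$ odd, $0$ for $u$ even). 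What your approach gives up is the explicit list of harmonic representatives with respect to $\Omega$, which the paper uses elsewhere (Lemma \ref{lemma:3} and the formality arguments of Section \ref{sec:CE}); what it buys is a complete, metric-free verification of every entry of the table.
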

We can write the explicit expression of the Bott-Chern harmonic forms at the bottom and top of the Bott-Chern diamond.
\begin{lemma}\label{lemma:3}
The forms $\omega_1\in\cala^{1,1}(M_{u,v})$,  $\omega_2\in\cala^{1,1}(M_{u,v})$, $\omega_1\wedge\omega_2\in\cala^{2,2}(M_{u,v})$, $\omega_1^u\wedge\omega_2^v\wedge \phi\in\cala^{u+v+1,u+v}(M_{u,v})$, and $\omega_1^u\wedge\omega_2^v\wedge\overline{\phi}\in\cala^{u+v,u+v+1}(M_{u,v})$ are Bott-Chern harmonic with respect to $\Omega$.
\begin{proof}
The thesis follows from straightforward computations using the model of Bott-Chern cohomology of Lemma \ref{lem:model-BC}.
\end{proof}
\end{lemma}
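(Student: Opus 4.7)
The plan is to verify the three defining conditions of Bott-Chern harmonicity, namely $\partial\alpha=0$, $\bar\partial\alpha=0$, and $\partial\bar\partial\ast\alpha=0$, for each of the five listed forms. All computations proceed by using the structural equations $d\omega_1=d\omega_2=0$, $d\phi=\omega_1-i\omega_2$, $d\overline{\phi}=\omega_1+i\omega_2$, together with the crucial identity $\partial\bar\partial(\phi\wedge\overline{\phi})=\omega_1^2+\omega_2^2$ from \eqref{eq:ddc_CE_phiphibar}, and the vanishing $\omega_1^{u+1}=0$, $\omega_2^{v+1}=0$ coming from the fact that $\omega_1,\omega_2$ are pullbacks of the Fubini-Study forms on $\mathbb{CP}^u$ and $\mathbb{CP}^v$ respectively.

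For the $d$-closedness, the forms $\omega_1$, $\omega_2$, $\omega_1\wedge\omega_2$ are manifestly $d$-closed, while
\[
d(\omega_1^u\wedge\omega_2^v\wedge\phi)=\omega_1^u\wedge\omega_2^v\wedge(\omega_1-i\omega_2)=\omega_1^{u+1}\wedge\omega_2^v-i\omega_1^u\wedge\omega_2^{v+1}=0,
\]
and analogously $d(\omega_1^u\wedge\omega_2^v\wedge\overline{\phi})=0$. In particular, each form is both $\partial$-closed and $\bar\partial$-closed.

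For the vanishing of $\partial\bar\partial\ast$, I would first record the Hodge stars, which are straightforward to compute from the orthogonal decomposition \eqref{eq:splitting_CE} and the normalization $|\phi|=1$: up to universal constants we have $\ast\omega_1=c\,\omega_1^{u-1}\wedge\omega_2^v\wedge\phi\wedge\overline{\phi}$, $\ast\omega_2=c'\,\omega_1^u\wedge\omega_2^{v-1}\wedge\phi\wedge\overline{\phi}$, $\ast(\omega_1\wedge\omega_2)=c''\,\omega_1^{u-1}\wedge\omega_2^{v-1}\wedge\phi\wedge\overline{\phi}$, $\ast(\omega_1^u\wedge\omega_2^v\wedge\phi)\propto\overline{\phi}$, and $\ast(\omega_1^u\wedge\omega_2^v\wedge\overline{\phi})\propto\phi$. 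Applying $\partial\bar\partial$ and using \eqref{eq:ddc_CE_phiphibar} together with the closedness of $\omega_1,\omega_2$, each of the first three Hodge stars produces an expression of the form $\omega_1^{a}\wedge\omega_2^{b}\wedge(\omega_1^2+\omega_2^2)$ with $a\in\{u-1,u\}$ and $b\in\{v-1,v\}$, and the dimensional vanishing $\omega_1^{u+1}=\omega_2^{v+1}=0$ forces the whole expression to vanish. For the last two, $\partial\bar\partial\phi=\partial(\omega_1-i\omega_2)=0$ and $\partial\bar\partial\overline{\phi}=-\bar\partial\partial\overline{\phi}=-\bar\partial(\omega_1+i\omega_2)=0$.

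The only subtle point is bookkeeping: one must be careful to verify that the factor of $\phi\wedge\overline{\phi}$ appearing in $\ast\omega_1$, $\ast\omega_2$, and $\ast(\omega_1\wedge\omega_2)$ indeed carries the $\partial\bar\partial$ via the Leibniz rule, i.e., that no extra terms survive from differentiating the $\omega_i$ factors (which they do not, since $d\omega_1=d\omega_2=0$). Beyond this, the proof is a direct calculation that I would encode in a short block of displayed equations and then conclude that each listed form lies simultaneously in $\ker\partial\cap\ker\bar\partial\cap\ker(\partial\bar\partial\ast)$, hence is Bott-Chern harmonic with respect to $\Omega$.
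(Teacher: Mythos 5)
Your proposal is correct and follows essentially the same route as the paper: verifying $\del$-, $\dbar$-closedness via $d\omega_1=d\omega_2=0$ and the dimensional vanishings $\omega_1^{u+1}=\omega_2^{v+1}=0$, then computing the Hodge stars (which land on multiples of $\omega_1^a\wedge\omega_2^b\wedge\phi\wedge\overline{\phi}$ or of $\phi$, $\overline{\phi}$) and killing $\del\dbar\ast$ with the identity $\del\dbar(\phi\wedge\overline{\phi})=\omega_1^2+\omega_2^2$. The paper records this only as a ``straightforward computation'' in the model of Lemma \ref{lem:model-BC}, and your displayed calculation is exactly that computation.
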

\begin{rmk}
From the computation of Bott-Chern numbers of Calabi-Eckmann manifolds of type $M_{0,v}$ \cite{Ste21}, we have that
\[
h_{BC}^{p,q}(M_{0,v})=\begin{cases}
    1,\, \text{if}\, \,\,\,(p,q)\in\{(0,0),(1,1),(v,v+1),(v+1,v),(v+1,v+1)\},\\
    0, \,\,\text{otherwise}.
\end{cases}
\]
In particular, the forms $\omega_2$, $\omega_2^v\wedge \phi$, $\omega_2^v\wedge \overline{\phi}$, and $\omega_2^v\wedge\phi\wedge\overline{\phi}$ determine the whole space of Bott-Chern harmonic forms on $M_{0,v}$ with respect to the standard metric $\Omega$. 
\end{rmk}
\begin{cor}\label{cor:no-geom-BC-std-CE}
Let $M_{u,v}$ be a Calabi-Eckmann manifold with $0\leq u\leq v$ and $v\geq 2$. Then, the standard metric $\Omega:=\omega_1+\omega_2+\frac{i}{2}\phi\wedge\overline{\phi}$ is not geometrically Bott-Chern formal.
\begin{proof}
Suppose that $g$ is geometrically Bott-Chern harmonic. Then, by Lemma \ref{lemma:3}, we have that $\omega_1^2$ and $\omega_2^2$ are Bott-Chern harmonic and so is their sum $\omega_1^2+\omega_2^2$. Note that, since $v\geq2$, $\omega_2^2\neq 0$ and, as a consequence, $\omega_1^2+\omega_2^2\neq 0$. However, by equation \eqref{eq:ddc_CE_phiphibar}, $\del\dbar(\phi\wedge\overline{\phi})=\omega_1^{2}+\omega_2^2$, which is a contradiction with decomposition \eqref{eq:decomp_hodge:_bott}. Thus, $g$ is not geometrically Bott-Chern formal.
\end{proof}
\end{cor}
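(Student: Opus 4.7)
The plan is to argue by contradiction, assuming that the standard metric $\Omega$ is geometrically Bott-Chern formal and producing a conflict between closure under the wedge product and the orthogonality properties of the Bott-Chern decomposition.

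First I would invoke Lemma \ref{lemma:3}, which furnishes $\omega_1, \omega_2 \in \calh_{BC}^{1,1}(M_{u,v}, \Omega)$. Under the geometric Bott-Chern formality assumption, $\calh_{BC}^{\bullet,\bullet}$ is an algebra under $\wedge$, so all of $\omega_1^2$, $\omega_1 \wedge \omega_2$, and $\omega_2^2$ (as well as their sum) must lie in $\calh_{BC}^{2,2}(M_{u,v}, \Omega)$. In particular, $\omega_1^2 + \omega_2^2$ is Bott-Chern harmonic.

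Next I would compare this with the identity \eqref{eq:ddc_CE_phiphibar}, i.e.\ $\del\dbar(\phi \wedge \overline{\phi}) = \omega_1^2 + \omega_2^2$, which exhibits $\omega_1^2 + \omega_2^2$ as a member of $\im(\del\dbar)$. By the orthogonal three-space decomposition \eqref{eq:decomp_hodge:_bott}, the summands $\calh_{BC}^{2,2}$ and $\im(\del\dbar|_{\cala^{1,1}})$ meet trivially, which forces $\omega_1^2 + \omega_2^2 = 0$.

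The final step is to rule this equation out whenever $v \geq 2$. When $u \geq 1$, the forms $\omega_1^2$ and $\omega_2^2$ are pullbacks of $\omega_{FS,u}^2$ and $\omega_{FS,v}^2$ respectively, from the two distinct projective factors of $\C\mathbb{P}^u \times \C\mathbb{P}^v$, hence are linearly independent nonzero forms and cannot cancel. When $u=0$, the form $\omega_1$ is identically zero, so $\omega_1^2 + \omega_2^2 = \omega_2^2$, which is nonzero because $\omega_{FS,v}^2 \neq 0$ on $\C\mathbb{P}^v$ with $v \geq 2$. In either case we reach the desired contradiction, and the hypothesis of geometric Bott-Chern formality fails.

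The argument is quite short; the only conceptual step is recognizing that the squared Kähler forms on the base are simultaneously forced to be Bott-Chern harmonic (by formality) and $\del\dbar$-exact (by the Calabi-Eckmann structure equations), and the bookkeeping to ensure $\omega_1^2+\omega_2^2\neq 0$ in the degenerate case $u=0$ is the only subtlety worth flagging.
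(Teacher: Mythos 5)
Your proof is correct and follows essentially the same route as the paper: Lemma \ref{lemma:3} gives harmonicity of $\omega_1,\omega_2$, formality forces $\omega_1^2+\omega_2^2\in\calh_{BC}^{2,2}$, and \eqref{eq:ddc_CE_phiphibar} places the same form in $\im(\del\dbar)$, contradicting \eqref{eq:decomp_hodge:_bott} once one checks the sum is nonzero. One tiny inaccuracy: for $u=1$ the form $\omega_1^2=\pi^*\omega_{FS,1}^2$ vanishes (it is the pullback of a $4$-form from $\mathbb{CP}^1$), so your claim that both squares are nonzero for all $u\geq 1$ is off, but the conclusion $\omega_1^2+\omega_2^2=\omega_2^2\neq0$ still holds there and the argument is unaffected.
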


We refine the result of Corollary \ref{cor:no-geom-BC-std-CE} to the stronger result of non-existence of geometrically Bott-Chern formal metrics on any Calabi-Eckmann manifold that is not of diffeomorphism type $\mathbb{S}^1\times \mathbb{S}^1$, $\mathbb{S}^1\times \mathbb{S}^3$, or $\mathbb{S}^3\times \mathbb{S}^3$.
\begin{thm}\label{thm:bott_CE}
Calabi-Eckmann manifolds $M_{u,v}$ are geometrically Bott-Chern formal if and only if for $u\leq v\leq 1$.
\begin{proof}
If $u\leq v\leq 1$, then the standard metric is geometrically Bott-Chern formal by \cite{AT15,TT17}.

Vice versa, let us assume that $v\geq 2$ and consider the cases $u=0$, $u=1$, and $u\geq 2$ separately.
\subsubsection*{Case $u=0$ (Hopf manifolds)}
We observe that $h_{BC}^{2,2}(M_{0,v})=0$, whereas, since $\omega_2$ is Bott-Chern harmonic with respect to the standard metric $\Omega$ due to Lemma \ref{lemma:3}, it defines a non-vanishing class $[\omega_2]\in H_{BC}^{1,1}(M_{0,v})$.

Since $v\geq 2$, the form $\omega_2^2$ is not vanishing, i.e., $\omega_2^2\neq 0$. Suppose that $g'$ is a geometrically Bott-Chern formal metric on $M_{0,v}$ and $\omega_2':=\omega_2+\del\dbar f$ is the Bott-Chern harmonic representative of $[\omega_2]$ with respect to $g'$.

By Lemma \ref{lemma:change-metric-bott}, we obtain that the form $\omega_2'^2$ is non-vanishing since $\omega_2^2$ is non-vanishing, and $\omega_2'^2$ is harmonic, since $g'$ is Bott-Chern harmonic with respect to $g'$. This, however, contradicts $h_{BC}^{2,2}(M_{0,v})=0$, yielding that $g'$ cannot be geometrically Bott-Chern formal. 
\subsubsection*{Case $u=1$}
Let $M_{1,v}$ be a Calabi-Eckmann manifold. Since $\omega_1$ and $\omega_2$ are Bott-Chern harmonic forms with respect to $\Omega$ due to Lemma \ref{lemma:3}, they define non-vanishing Bott-Chern cohomology classes that span $H_{BC}^{1,1}(M_{1,v})$, i.e,
\[
H_{BC}^{1,1}(M_{1,v})=\C\langle[\omega_1],[\omega_2]\rangle.
\]
It can computed directly that $\omega_1\wedge \omega_2$ is Bott-Chern harmonic with respect to $\Omega$, so that
\[
H_{BC}^{2,2}(M_{1,v})=\C\langle[\omega_1\wedge \omega_2]\rangle.
\]
Let us assume now that $g'$ is a geometrically Bott-Chern formal metric on $M_{1,v}$.
With respect to $g'$, the Bott-Chern harmonic representatives of $[\omega_1]$ and $[\omega_2]$ are, respectively
\[
\omega_1':=\omega_1+\del\dbar f, \qquad \omega_2':=\omega_2+\del\dbar g,
\]
with  $f,g\in \mathcal{C}^{\infty}(M)$, solving the equations
\[
\del\dbar\ast_{g'}\omega_1'=0, \qquad \del\dbar\ast_{g'}\omega_2'=0.
\]
Since $\omega_2^2\neq 0$, from Lemma \ref{lemma:change-metric-bott} we have that also $\omega_2'^2\neq 0$. Since $\del\dbar(\phi\wedge\overline{\phi})=\omega_2^2$, we obtain that
\[
\omega_2'^2=(\omega_2+\del\dbar g)^2=\del\dbar(g\del\dbar g+2g\omega_2+\phi\wedge\overline{\phi}),
\]
which is a contradiction, since $\omega_2'^2\neq 0$ is a Bott-Chern harmonic form but it holds that $[\omega_2'^2]=0\in H_{BC}^{2,2}(M)$. As a consequence, $g'$ is not geometrically Bott-Chern formal.

\subsubsection*{Case $u\geq 2$}

By Lemma \ref{lem:model-BC}, via direct computations, we can show that
$$
H_{BC}^{2,2}(M_{u,v})=\C\langle[\omega_1\wedge\omega_2],[\omega_1^2-\omega_2^2]\rangle,
$$
where $\omega_1$ and $\omega_2$ are Bott-Chern harmonic with respect to $\Omega$.

Let $g'$ be a geometrically Bott-Chern metric on $M_{u,v}$ and let, as before, $\omega_1':=\omega_1+\del\dbar f$ and $\omega_2+\del\dbar g$ be the Bott-Chern harmonic $(1,0)$-forms with respect to $g'$.

Then, the Bott-Chern harmonic forms $\omega_1'^2$, $\omega_2'^2$, and $\omega_1'\wedge\omega_2'$ with respect to $g'$ decompose as
\begin{align*}
&\omega_1'^2=\frac{1}{2}(\omega_1^2-\omega_2^2)+\del\dbar(f\del\dbar f+2f\omega_1+\frac{1}{2}\phi\wedge\overline{\phi})\\
&\omega_2'^2=-\frac{1}{2}(\omega_1^2-\omega_2^2)+\del\dbar(g\del\dbar g+2g\omega_2+\frac{1}{2}\phi\wedge\overline{\phi})\\
&\omega_1'\wedge\omega_2'=\omega_1\wedge \omega_2+\del\dbar(f\del\dbar g+f\omega_2+g\omega_1).
\end{align*}
The components along $\im(\del\dbar)^{\perp}$ with respect to the metric $\Omega$, are non-vanishing and, in particular, $\omega_1\wedge\omega_2$ is linearly independent from $\omega_1^2-\omega_2^2$. Since $h_{BC}^{2,2}(M_{u,v})=2$, $\omega_1'^2$ and $\omega_2'^2$ must be multiples of each other. More precisely, it must hold that
\[
\omega_1'^2=-\omega_2'^2.
\]
This implies that $\omega_1'^2+\omega_2'^2=0$, or, equivalently, that
\[
\omega_1^2+(\del\dbar f)^2+2\del\dbar f\wedge \omega_1+\omega_2^2+(\del\dbar g)^2+2\del\dbar g\wedge \omega_2=0.
\]
However, by applying to both sides of the equation the interior product of $\omega_1^\sharp$ with respect to $g'$, we get
\[
2\left(||\omega_1||_{g'}^2+g'(\omega_1,\del\dbar f)\right)\omega_1'+2\left(g'(\omega_1,\omega_2)+g'(\omega_1,\del\dbar h)\right)\omega_2'=0,
\]
and, analogously for the interior product with $(\del\dbar f)^\sharp$, we obtain
\[
2\left(||\del\dbar f||_{g'}^2+g'(\omega_1,\del\dbar f)\right)\omega_1'+2\left(g'(\del\dbar f,\omega_2)+g'(\del\dbar f ,\del\dbar h)\right)\omega_2'=0.
\]
Since $\omega_1'$ and $\omega_2'$ are linearly independent, it must hold that
\[
||\omega_1'||_{g'}^2=||\omega_1||_{g'}^2+2g'(\omega_1,\del\dbar f)+||\del\dbar f||_{g'}^2=0,
\]
which is a contradiction, since $\omega_1'\neq 0$. It follows then that $g'$ cannot be a geometrically Bott-Chern formal metric.
\end{proof}
\end{thm}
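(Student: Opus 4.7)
The plan is to prove the two directions separately. The \emph{if} direction is immediate from the explicit descriptions of the Bott-Chern harmonic forms of the standard metric $\Omega$ given just before the theorem, which already display $(\calh_{BC}^{\bullet,\bullet},\wedge)$ as an algebra in each of the cases $(u,v)\in\{(0,0),(0,1),(1,1)\}$, cf.\ also \cite{AT15,TT17}. The substantive content is the \emph{only if} direction: if $v\geq 2$, then no Hermitian metric $g'$ on $M_{u,v}$ is geometrically Bott-Chern formal. The unifying obstruction throughout is the identity \eqref{eq:ddc_CE_phiphibar}, namely $\del\dbar(\phi\wedge\overline{\phi})=\omega_1^2+\omega_2^2$, which forces $[\omega_1^2+\omega_2^2]=0\in H_{BC}^{2,2}(M_{u,v})$.

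Assume for contradiction that $g'$ is geometrically Bott-Chern formal. By Lemma \ref{lemma:3}, the forms $\omega_1,\omega_2$ are $d$-closed and define classes in $H_{BC}^{1,1}(M_{u,v})$; write $\omega_i':=\omega_i+\del\dbar f_i$ for their $g'$-Bott-Chern harmonic representatives. Lemma \ref{lemma:change-metric-bott} guarantees $(\omega_i')^k\neq 0$ whenever $\omega_i^k\neq 0$, and geometric Bott-Chern formality makes every product $(\omega_1')^a\wedge(\omega_2')^b$ $g'$-Bott-Chern harmonic. I would dispatch $u=0$ and $u=1$ as follows. For $u=0$, Lemma \ref{lem:model-BC} gives $h_{BC}^{2,2}(M_{0,v})=0$, while $v\geq 2$ forces $(\omega_2')^2\neq 0$, contradicting the vanishing of $h_{BC}^{2,2}$. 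For $u=1$, since $\omega_1$ is pulled back from $\mathbb{CP}^1$ we have $\omega_1^2=0$, so \eqref{eq:ddc_CE_phiphibar} reduces to $\omega_2^2=\del\dbar(\phi\wedge\overline{\phi})$; then $(\omega_2')^2$ is nonzero, $g'$-Bott-Chern harmonic, and $\del\dbar$-exact, contradicting the orthogonal decomposition \eqref{eq:decomp_hodge:_bott}.

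The delicate case is $u\geq 2$, where both $\omega_1^2$ and $\omega_2^2$ are individually nonzero so the obstruction cannot be read off from a single class. A direct computation with the model of Lemma \ref{lem:model-BC} yields $H_{BC}^{2,2}(M_{u,v})=\C\langle[\omega_1\wedge\omega_2],\,[\omega_1^2-\omega_2^2]\rangle$, in particular $h_{BC}^{2,2}=2$. The three Bott-Chern harmonic forms $(\omega_1')^2,(\omega_2')^2,\omega_1'\wedge\omega_2'$ lie in a two-dimensional cohomology space, and the relation $[\omega_1^2+\omega_2^2]=0$ combined with cohomological invariance of cup products forces $(\omega_1')^2+(\omega_2')^2$ to be both $g'$-Bott-Chern harmonic and cohomologous to zero, hence identically zero as a form. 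The main obstacle, and the step that will require care, is to convert this pointwise identity into a contradiction. I would contract $(\omega_1')^2+(\omega_2')^2=0$ with the $g'$-metric dual $(\omega_1')^\sharp$ and then with $(\del\dbar f_1)^\sharp$, producing a $2\times 2$ homogeneous linear system in the (linearly independent) forms $\omega_1'$ and $\omega_2'$ whose only solution requires $\|\omega_1'\|_{g'}^2=0$, contradicting $[\omega_1]\neq 0$ in $H_{BC}^{1,1}(M_{u,v})$. Verifying that this linear system is genuinely overdetermined, given the relations between the inner products $g'(\omega_1,\omega_2)$ and $g'(\omega_i,\del\dbar f_j)$, is the crux of the argument.
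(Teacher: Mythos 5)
Your proposal is correct and follows essentially the same route as the paper: the same case split $u=0$, $u=1$, $u\geq 2$, the same use of $\del\dbar(\phi\wedge\overline{\phi})=\omega_1^2+\omega_2^2$ together with Lemma \ref{lemma:change-metric-bott} to kill the first two cases, and the same contraction argument forcing $\|\omega_1'\|_{g'}^2=0$ in the last. The only (cosmetic) difference is that you obtain $(\omega_1')^2+(\omega_2')^2=0$ directly from $[\omega_1^2+\omega_2^2]=0$ and uniqueness of Bott--Chern harmonic representatives, whereas the paper reaches the same identity by first computing $h_{BC}^{2,2}(M_{u,v})=2$ and matching the harmonic representatives of $[\omega_1^2-\omega_2^2]$.
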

We turn now to the study of geometrically Dolbeault formal metrics on Calabi-Eckmann manifolds. We recall that the Hodge numbers of a Calabi-Eckmann manifold $M_{u,v}$ are given \cite{Bor} by
\begin{equation}
    h_{\dbar}^{p,q}(M_{u,v})=\begin{cases}
        1, \,\,\text{if} \,\,p\leq u, \,\, q\in\{p,p+1\},\\
        \quad \,\text{or}\,\, p>v, \,\, q\in\{p,p-1\},\\
        0, \,\, \text{else},
    \end{cases}
\end{equation}
and by \cite{NT78,MS24}, Calabi-Eckmann manifolds are Dolbeault formal but not strictly formal.
\begin{lemma}\label{lemma:dolb-geom-CE-std}
Let $M_{u,v}$ be a Calabi-Eckmann manifold. Then the standard metric $\Omega:=\frac{i}{2}\phi\wedge\overline{\phi}+\omega_1^u+\omega_2^v$ is geometrically Dolbeault formal if and only if $u=0$.
\begin{proof}
If $u=0$, then the Dolbeault harmonic forms with respect to $\Omega$ are
\begin{gather*}
\mathcal{H}_{\dbar}^{0,0}(M_{0,v})=\C\langle[1]\rangle, \quad \mathcal{H}_{\dbar}^{0,1}(M_{0,v})=\C\langle[\overline{\phi}]\rangle, \quad \mathcal{H}_{\dbar}^{v+1,v}(M_{0,v})=\C\langle[\phi\wedge\omega_2^v]\rangle, \\
\mathcal{H}_{\dbar}^{v+1,v+1}(M_{0,v})=\C\langle[\phi\wedge\overline{
\phi}\wedge\omega_2^v]\rangle, 
\end{gather*}
and clearly $(\mathcal{H}_{\dbar}^{\bullet,\bullet}(M_{0,v}),\wedge)$ is an algebra.

Vice versa, if $u\neq 0$, we have two cases: $u=1$, and $u\geq 2$.\\
If $u=1$, then it is easy to see that $\mathcal{H}_{\dbar}^{1,1}(M_{1,v})=\C\langle[\omega_1+i\omega_2]\rangle$. Suppose by contradiction that $\Omega$ is geometrically Dolbeault formal. Then
$$
(\omega_1+i\omega_2)^2=\omega_1^2-\omega_2^2+2i\omega_1\wedge\omega_2=-\omega_2^2+2i\omega_1\wedge\omega_2\neq 0
$$
is Dolbeault harmonic and non-zero. However, it can be checked that it is also $\dbar$-exact, since $\dbar(\phi\wedge(-3\omega_1-i\omega_2))=(\omega_1+i\omega_2)^2$. This yields a contradiction and, as a result, $\Omega$ is not geometrically Dolbeault formal.

If $u\geq 2$, we can check that
\[
\mathcal{H}_{\dbar}^{1,1}(M_{1,v})=\C\langle\omega_1+i\omega_2\rangle, \quad 
\mathcal{H}_{\dbar}^{2,2}(M_{u,v})=\C\langle\omega_1\wedge\omega_2-i\omega_1^2+i\omega_2^2\rangle.
\]
On the other hand,  $$0\neq (\omega^1+i\omega_2)^2=\omega_1^2-\omega_2^2+2i\omega_1\wedge \omega_2\notin \C\langle\omega_1\wedge\omega_2-i\omega_1^2+i\omega_2^2\rangle,$$
hence $\Omega$ is not geometrically Dolbeault formal.
\end{proof}
\end{lemma}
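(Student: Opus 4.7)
The plan is to split the argument into the cases $u=0$ and $u\geq 1$, and in each case exploit the fact that Calabi-Eckmann manifolds have very small Dolbeault cohomology (the Hodge numbers from \cite{Bor} give $h_{\dbar}^{p,q}(M_{u,v})\in\{0,1\}$). The key algebraic relations I would use throughout are $\dbar\phi=\omega_1-i\omega_2$, $\del\overline{\phi}=\omega_1+i\omega_2$, $\omega_1^{u+1}=0$, and $\omega_2^{v+1}=0$, together with the explicit action of $\ast$ on monomials $\omega_1^a\wedge\omega_2^b\wedge\phi^\epsilon\wedge\overline{\phi}^\delta$.

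For the sufficient direction ($u=0$), I would first list the $\Omega$-Dolbeault harmonic forms explicitly, expecting the four generators $1,\overline{\phi},\phi\wedge\omega_2^v,\phi\wedge\overline{\phi}\wedge\omega_2^v$, one for each nonzero Hodge number of $M_{0,v}$. Harmonicity with respect to $\Omega$ is then a short check using $\dbar\phi=-i\omega_2$ and $\dbar\omega_2=0$. Closure under $\wedge$ is nearly automatic: any product containing two copies of $\phi$, two copies of $\overline{\phi}$, or a factor $\omega_2^{v+1}$ vanishes, and the only nontrivial remaining combination, $\overline{\phi}\wedge(\phi\wedge\omega_2^v)=-\phi\wedge\overline{\phi}\wedge\omega_2^v$, is again in the list.

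For the necessary direction ($u\geq 1$), the plan is to exhibit a single $\Omega$-Dolbeault harmonic form whose square fails to be Dolbeault harmonic. The natural candidate is $\eta:=\omega_1+i\omega_2\in\cala^{1,1}(M_{u,v})$: it is $d$-closed, and its $\dbar^\ast$-closedness with respect to $\Omega$ follows from a direct computation of $\ast\eta$. Then $\eta^2=\omega_1^2-\omega_2^2+2i\,\omega_1\wedge\omega_2$. In the case $u=1$, one has $\omega_1^2=0$, and the identity
\[
\dbar\bigl(\phi\wedge(-3\omega_1-i\omega_2)\bigr)=(\omega_1-i\omega_2)\wedge(-3\omega_1-i\omega_2)=-\omega_2^2+2i\,\omega_1\wedge\omega_2=\eta^2
\]
exhibits $\eta^2$ as a nonzero $\dbar$-exact form. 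If $\Omega$ were geometrically Dolbeault formal, Lemma \ref{lemma:change-metric-dolb} combined with the uniqueness of harmonic representatives in each Dolbeault cohomology class would force $\eta^2=0$, a contradiction. For $u\geq 2$, I would instead use the Dolbeault analogue of Lemma \ref{lem:model-BC} to identify $\calh_\dbar^{2,2}(M_{u,v})=\C\langle\omega_1\wedge\omega_2-i\omega_1^2+i\omega_2^2\rangle$ (one-dimensional by the Hodge numbers); then a coefficient comparison between $\eta^2=\omega_1^2-\omega_2^2+2i\,\omega_1\wedge\omega_2$ and this spanning form shows they are not proportional (the ratios on the $\omega_1^2,\omega_2^2,\omega_1\wedge\omega_2$ components would have to equal $i,i,2i$), so $\eta^2\notin\calh_\dbar^{2,2}$.

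The main obstacle I anticipate is pinning down $\calh_\dbar^{2,2}(M_{u,v})$ for $u\geq 2$: one must exhibit a nonzero harmonic form matching the prescribed Hodge number $h_\dbar^{2,2}(M_{u,v})=1$, which requires some bookkeeping with $\dbar^\ast$ applied to invariant forms. Once this identification is in place, the coefficient argument for $\eta^2$ and the closure argument for $u=0$ are routine calculations using $\dbar\phi=\omega_1-i\omega_2$.
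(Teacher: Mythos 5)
Your proposal is correct and follows essentially the same route as the paper: the same explicit list of harmonic generators and closure check for $u=0$, the same witness $\eta=\omega_1+i\omega_2$ with the identity $\dbar(\phi\wedge(-3\omega_1-i\omega_2))=\eta^2$ for $u=1$, and the same coefficient comparison of $\eta^2$ against the generator $\omega_1\wedge\omega_2-i\omega_1^2+i\omega_2^2$ of $\calh_{\dbar}^{2,2}$ for $u\geq 2$. The only cosmetic difference is your invocation of Lemma \ref{lemma:change-metric-dolb} in the $u=1$ step, which is not needed since $\eta$ is already harmonic for the standard metric and a nonzero harmonic form cannot be $\dbar$-exact by the Hodge decomposition.
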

\begin{thm}\label{thm:dolb_CE}
Calabi-Eckmann manifolds $M_{u,v}$ are geometrically Dolbeault formal if and only if $u=0$.
\begin{proof}
Assume first that $u=0$. Then, by Lemma \ref{lemma:dolb-geom-CE-std}, the standard metric is geometrically Dolbeault formal.

Vice versa, suppose that $u\neq 0$. Then, we will prove that $M_{u,v}$ is not geometrically Dolbeault formal.

By the computations of Lemma \ref{lemma:dolb-geom-CE-std}, we know that $H_{\dbar}^{1,1}(M_{1,v})=\C\langle[\omega_1+i\omega_2]\rangle$, where $\omega_1+i\omega_2=:\alpha$ is the Dolbeault harmonic representative with respect to the standard metric $\Omega$. Suppose that $g'$ is a geometrically Dolbeault formal metric and
\[
\alpha':=\alpha+\dbar f, \quad f\in\mathcal{A}^{1,0}(M_{u,v})
\]
is the Dolbeault harmonic representative of $[\omega_1+i\omega_2]\in H_{\dbar}^{1,1}$, with respect to $g'$. By Lemma \ref{lemma:change-metric-dolb}, since $\alpha^k\neq 0$ for $k\in\{1,\dots, u+v\}$, it also holds that  $(\alpha')^k\neq 0$. Since $g'$ is geometrically Dolbeault formal, this implies that
\[
\mathcal{H}_{\dbar}^{k,k}(M_{u,v})=\C\langle(\alpha')^k\rangle, \quad k\in\{1,\dots, u+v\}.
\]
This yields a contradiction if $u<v$. Indeed, $h_{\dbar}^{u+1,u+1}(M_{u,v})=0$ if $u<v$.\\
If $u=v$, then we can consider $(\alpha')^{2u}\neq 0$, which we can express as
\begin{align*}
(\alpha')^{2u}=(\alpha+\dbar f)^{2u}&=\sum_{k=0}^{2u}\begin{pmatrix}
    2u\\ k
\end{pmatrix}\alpha^k\wedge (\dbar f)^{2u-k}\\\
&=\alpha^{2u}+\sum_{k=0}^{2u-1}\begin{pmatrix}
    2u\\ k
\end{pmatrix}\alpha^k\wedge (\dbar f)^{2u-k}\\
&=i^{2u}\begin{pmatrix}
    2u\\u
\end{pmatrix}\omega_1^u\wedge\omega_2^u\\
&+\dbar\left(\sum_{k=0}^{2u-1}\begin{pmatrix}
    2u\\k
\end{pmatrix}\alpha^k\wedge f\wedge (\dbar f)^{2u-k-1}\right).
\end{align*}
If we set $C:=i^{2u}\begin{pmatrix}
    2u\\u
\end{pmatrix}$, then it is easy to check that
\[
\dbar (C\phi\wedge\omega_1^{u-1}\wedge\omega_2^u)=C\omega_{1}^u\wedge \omega_2^u,
\]
hence the non-vanishing Dolbeault harmonic form $(\alpha')^{2u}$ can be expressed as
\[
(\alpha')^{2u}=\dbar \left(C\phi\wedge \omega_1^{u-1}\wedge\omega_2^u+\sum_{k=0}^{2u-1}\begin{pmatrix}
    2u\\k
\end{pmatrix}\alpha^k\wedge f\wedge (\dbar f)^{2u-k-1}\right),
\]
i.e., it is $\dbar$-exact. This yields a contradiction. In particular, $g'$ cannot be geometrically Dolbeault formal.
\end{proof}
\end{thm}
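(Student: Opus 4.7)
The plan is to separate cases along the value of $u$. For the ``if'' direction, when $u = 0$, Lemma \ref{lemma:dolb-geom-CE-std} already establishes that the standard metric $\Omega$ is geometrically Dolbeault formal on $M_{0,v}$, so nothing further is needed. For the ``only if'' direction, I will suppose $u \geq 1$ and assume for contradiction that a geometrically Dolbeault formal metric $g'$ exists on $M_{u,v}$.

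The structural input is that, by (the proof of) Lemma \ref{lemma:dolb-geom-CE-std}, the space $H_{\dbar}^{1,1}(M_{u,v}) = \C\langle[\omega_1 + i\omega_2]\rangle$ is one-dimensional for every $u \geq 1$. Setting $\alpha := \omega_1 + i\omega_2$, the $g'$-Dolbeault harmonic representative of this class must then have the form $\alpha' = \alpha + \dbar f$ for some $f \in \mathcal{A}^{1,0}(M_{u,v})$. A binomial expansion shows that $\alpha^k \neq 0$ for every $k \in \{1, \ldots, u+v\}$, since the surviving summands $\omega_1^j \wedge \omega_2^{k-j}$ (with $\max(0, k-v) \leq j \leq \min(k, u)$) are linearly independent pullbacks from $\mathbb{CP}^u \times \mathbb{CP}^v$. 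Lemma \ref{lemma:change-metric-dolb} then upgrades this to $(\alpha')^k \neq 0$, while geometric Dolbeault formality of $g'$ forces $(\alpha')^k \in \mathcal{H}_{\dbar}^{k,k}(M_{u,v})$ for each such $k$.

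The argument then closes via a case split. If $u < v$, I take $k = u+1$; the Hodge numbers of $M_{u,v}$ give $h_{\dbar}^{u+1,u+1}(M_{u,v}) = 0$, contradicting $(\alpha')^{u+1} \neq 0$. The subtler case is $u = v$, where the relevant Hodge numbers do not vanish; here I will take $k = 2u$ and note that every term in the binomial expansion of $\alpha^{2u}$ except the middle one vanishes for dimensional reasons ($\omega_1^j = 0$ for $j > u$, and $\omega_2^{2u-j} = 0$ for $j < u$), leaving $\alpha^{2u} = i^u \binom{2u}{u}\,\omega_1^u \wedge \omega_2^u$. The key identity is then $\dbar(\phi \wedge \omega_1^{u-1} \wedge \omega_2^u) = (\omega_1 - i\omega_2) \wedge \omega_1^{u-1} \wedge \omega_2^u = \omega_1^u \wedge \omega_2^u$, using $\omega_2^{u+1} = 0$ together with $d\omega_1 = d\omega_2 = 0$. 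Consequently $(\alpha')^{2u} = \alpha^{2u} + \dbar(\ldots)$ is simultaneously non-zero, Dolbeault harmonic, and $\dbar$-exact, contradicting the orthogonal decomposition \eqref{eq:decomp_hodge:_dolb}. The main obstacle is precisely this $u = v$ case: Lemma \ref{lemma:change-metric-dolb} is essential to propagate the $\dbar$-exactness of $\omega_1^u \wedge \omega_2^u$ to the a priori unknown harmonic representative $(\alpha')^{2u}$.
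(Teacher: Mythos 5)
Your proposal is correct and follows essentially the same route as the paper's proof: the same reduction to the harmonic representative $\alpha'=\alpha+\dbar f$ of $[\omega_1+i\omega_2]$, the same use of Lemma \ref{lemma:change-metric-dolb} to get $(\alpha')^k\neq 0$, the same contradiction via $h_{\dbar}^{u+1,u+1}=0$ when $u<v$, and the same $\dbar$-primitive $\phi\wedge\omega_1^{u-1}\wedge\omega_2^u$ exhibiting $(\alpha')^{2u}$ as $\dbar$-exact when $u=v$. The only (harmless) discrepancy is the constant in $\alpha^{2u}$, where your $i^u\binom{2u}{u}$ is in fact the correct coefficient.
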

\begin{rmk}
By Theorem \ref{thm:bott_CE} and Theorem \ref{thm:dolb_CE}, Calabi-Eckmann manifolds are geometrically Dolbeault formal if and only if they are LCK \cite{KO}, whereas Calabi-Eckmann manifolds are geometrically Bott-Chern formal if and only if they are SKT, and, more precisely,  Bismut flat \cite{WYZ}. Along the lines of Theorem \ref{thm:curv-kahl-non-neg-curv}, it would be interesting to find a Hermitian connections $\tilde{\nabla}$ such that any compact Hermitian $\mathcal{R}^{\tilde{\nabla}}$-flat manifold is either geometric Dolbeault or Bott-Chern formal.
\end{rmk}

\appendix
\section{ABC-Massey products on complex solvable manifolds}\label{appendix}
In this appendix, we  write the explicit expression of a triple ABC-Massey product for each complex solvable manifold in complex dimension up to $5$ of Proposition \ref{prop:ABC-Massey-cs}, by listing structure equations, starting Bott-Chern cohomology classes and Aeppli cohomology representatives.
\vspace{0.3cm}

\begin{itemize}
    \item[($III.2$)] $d\phi^i=0$, $i=1,2$,\qquad $d\phi^3=-\phi^{12}$.
    \[
    \langle[\phi^{12}],[\phi^{\overline{12}}],[\phi^{\overline{12}}]\rangle_{ABC}=[\phi^{3\overline{123}}]_A
    \]\vspace{0.02cm}
    \item[($III.3$)] $d\phi^1=0$, \qquad $d\phi^{2}=-\phi^{12}$, \qquad $d\phi^3=\phi^{13}$.
    \[
    \langle[\phi^{12}],[\phi^{\overline{12}}],[\phi^{\overline{13}}]\rangle_{ABC}=[\phi^{2\overline{123}}]_A
    \]\vspace{0.02cm}
    \item[($IV.2$)] $d\phi^i=0$ for $i=1,2,3$, \qquad  $d\phi^4=-\phi^{23}$,\quad i.e., $M=\mathbb{T}^2\times III.2$.
    \[
    \langle[\phi^{23}],[\phi^{\overline{23}}],[\phi^{\overline{23}}]\rangle_{ABC}=[\phi^{4\overline{234}}]_A
    \]\vspace{0.02cm}
    \item[($IV.3$)] $d\phi^i=0$, $i=1,2$,\qquad  $d\phi^3=-\phi^{12}$,\qquad  $d\phi^4=-2\phi^{13}$.
    \[
    \langle[\phi^{12}],[\phi^{\overline{12}}],[\phi^{\overline{2}}]\rangle_{ABC}=[\phi^{3\overline{23}}]_A
    \]\vspace{0.02cm}
    \item[($IV.4$)] $d\phi^i=0$, $i=1,2$, \quad $d\phi^3=\phi^{23}$, \quad  $d\phi^4=-\phi^{24}$,\quad  i.e., $(M,J)\cong \mathbb{T}^2\times III.3$.
    \[
    \langle[\phi^{23}],[\phi^{\overline{23}}],[\phi^{\overline{24}}]\rangle_{ABC}=[\phi^{3\overline{234}}]_A
    \]\vspace{0.02cm}
    \item[($IV.6$)] $d\phi^1=0$, $d\phi^2=\phi^{12}$, $d\phi^3=-\phi^{13}$, $d\phi^4=-\phi^{23}$.
    \[
    \langle[\phi^{23}],[\phi^{\overline{23}}],[\phi^{\overline{23}}]\rangle_{ABC}=[\phi^{4\overline{234}}]_A
    \]\vspace{0.02cm}
    \item[($V.2$)] $d\phi^i=0$, $i=1,2,3,4$, \quad $d\phi^5=\phi^{34}$, i.e., $(M,J)=\mathbb{T}^4\times III.2$.
    \[
    \langle[\phi^{34}],[\phi^{\overline{34}}],[\phi^{\overline{34}}]\rangle_{ABC}=[\phi^{5\overline{345}}]_A
    \]\vspace{0.02cm}
    \item[($V.3$)] $d\phi^i=0$, $i=1,2,3,4$, \quad $d\phi^5=-\phi^{13}-\phi^{24}$.
    \[
    \langle[\phi^{124}],[\phi^{\overline{124}}],[\phi^{\overline{2}}]\rangle_{ABC}=[\phi^{15\overline{125}}]_A
    \]\vspace{0.02cm}
    \item[($V.4$)] $d\phi^i=0$, $i=1,2,3$, \quad $d\phi^4=-\phi^{12}$,\quad  $d\phi^5=-\phi^{13}$.
    \[
    \langle[\phi^{12}],[\phi^{\overline{12}}],[\phi^{\overline{12}}]\rangle_{ABC}=[\phi^{4\overline{124}}]_A
    \]\vspace{0.02cm}
    \item[($V.5$)] $d\phi^i=0$, $i=1,2,3$, \quad $d\phi^4=-\phi^{23}$, \quad $d\phi^5=-2\phi^{24}$, i.e., $(M,J)\cong \mathbb{T}^2\times IV.3$
    \[
    \langle[\phi^{23}],[\phi^{\overline{23}}],[\phi^{\overline{3}}]\rangle_{ABC}=[\phi^{4\overline{34}}]_A
    \]\vspace{0.02cm}
    \item[($V.6$)] $d\phi^i=0$, $i=1,2,3$, \quad $d\phi^4=-\phi^{12}$, \quad $d\phi^5=-2\phi^{14}-\phi^{23}$.
    \[
    \langle[\phi^{12}],[\phi^{\overline{12}}],[\phi^{\overline{2}}]\rangle_{ABC}=[\phi^{4\overline{24}}]_A
    \]\vspace{0.02cm}
    \item[($V.7$)] $d\phi^i=0$, $i=1,2,3$, \quad $d\phi^4=\phi^{34}$, $d\phi^5=-\phi^{35}$, i.e., $(M,J)=\mathbb{T}^2\times III.2$.
    \[
    \langle[\phi^{34}],[\phi^{\overline{34}}],[\phi^{\overline{35}}]\rangle_{ABC}=[\phi^{4\overline{345}}]_A
    \]\vspace{0.02cm}
    \item[($V.8$)] $d\phi^i=0$, $i=1,2$, \quad $d\phi^3=-\phi^{12}$, \quad$d\phi^4=-2\phi^{13}$, $d\phi^5=-2\phi^{23}$.
    \[
    \langle[\phi^{23}],[\phi^{\overline{23}}],[\phi^{\overline{2}}]\rangle_{ABC}=[\phi^{5\overline{25}}]_A
    \]\vspace{0.02cm}
    \item[($V.9$)] $d\phi^i=0$, $i=1,2$, \quad $d\phi^3=-\phi^{12}$, \quad $d\phi^4=-2\phi^{13}$, $d\phi^5=-3\phi^{14}$.
    \[
    \langle[\phi^{12}],[\phi^{\overline{12}}],[\phi^{\overline{2}}]\rangle_{ABC}=[\phi^{4\overline{345}}]_A
    \]\vspace{0.02cm}
    \item[($V.10$)] $d\phi^i=0$, $i=1,2$, \quad $d\phi^3=-\phi^{12}$, \quad $d\phi^4=-2\phi^{13}$, $d\phi^5=-3\phi^{14}-\phi^{23}$.
    \[
    \langle[\phi^{1245}],[\phi^{\overline{1245}}],[\phi^{\overline{2}}]\rangle_{ABC}=[\phi^{345\overline{2345}}]_A
    \]\vspace{0.02cm}
    \item[($V.12$)] $d\phi^i=0$, $i=1,2$, \quad $d\phi^3=\phi^{13}$, \quad $d\phi^4=\phi^{24}$, $d\phi^5=-\phi^{15}-\phi^{25}$.
    \[
    \langle[\phi^{235}],[\phi^{\overline{235}}],[\phi^{\overline{24}}]\rangle_{ABC}=[\phi^{35\overline{2345}}]_A
    \]\vspace{0.02cm}
    \item[($V.15$)] $d\phi^i=0$, $i=1,2$, \quad $d\phi^3=\phi^{23}$, \quad $d\phi^4=-\phi^{24}$, $d\phi^5=-\phi^{34}$, i.e., $(M,J)=\mathbb{T}^2\times IV.6$.
    \[
    \langle[\phi^{34}],[\phi^{\overline{34}}],[\phi^{\overline{34}}]\rangle_{ABC}=[\phi^{5\overline{345}}]_A
    \]\vspace{0.02cm}
    \item[($V.17$)] $d\phi^1=0$, $d\phi^2=\phi^{12}$, \quad $d\phi^3=\alpha\phi^{13}$, \quad $d\phi^4=\beta\phi^{14}$, $d\phi^5=-(1+\alpha+\beta)\phi^{15}$, with $\alpha\beta(1+\alpha+\beta)\neq 0$. If $\beta\neq -1$
    \[
    \langle[\phi^{124}],[\phi^{\overline{124}}],[\phi^{\overline{135}}]\rangle_{ABC}=[\phi^{24\overline{12345}}]_A.
    \]
    If $\beta=-1$, $\alpha\neq 0$,
    \[
    \langle[\phi^{1234}],[\phi^{\overline{1234}}],[\phi^{\overline{15}}]\rangle_{ABC}=[\phi^{234\overline{12345}}]_A.
    \]
\end{itemize}

\end{document}